\theoremstyle{plain}
\newtheorem{theorem}{Theorem}[section]
\newtheorem{lemma}[theorem]{Lemma}
\newtheorem{assumption}[theorem]{Assumption}
\theoremstyle{definition}
\newtheorem{definition}[theorem]{Definition}
\theoremstyle{remark}
\newtheorem{remark}[theorem]{Remark}
\newcommand{\coloneq}{\coloneqq}
\newtheorem{rulethm}{Rule}
\DeclareMathOperator{\F}{\mathcal{F}}
\begin{document}

\title[\texttt{E-IRMGL+$\Psi$}]{Graph Laplacian assisted regularization method under noise level free heuristic and statistical stopping rule}
\author[H. Bajpai]{Harshit Bajpai}
\author[A. K. Giri]{Ankik Kumar Giri}
\thanks{
\hspace{-0.64cm} Department of Mathematics, Indian Institute of Technology Roorkee, Roorkee, Uttarakhand 247667, India.\\[3pt]
E-mail: \texttt{harshit\_b@ma.iitr.ac.in} (Harshit Bajpai), \texttt{ankik.giri@ma.iitr.ac.in} (Ankik Kumar Giri).\\[3pt]
This work was supported by the Anusandhan National Research Foundation (ANRF), India, under Grant No.\ CRG/2022/00548.
}
\maketitle
\begin{quote}
{{\em \bf Abstract.}}
In this work, we address the solution of both linear and nonlinear ill-posed inverse problems by developing a novel graph-based regularization framework, where the regularization term is formulated through an iteratively updated graph Laplacian. The proposed approach operates without prior knowledge of the noise level and employs two distinct stopping criteria namely, the heuristic rule and the statistical discrepancy principle. To facilitate the latter, we utilize averaged measurements derived from multiple repeated observations.
We provide a detailed convergence analysis of the method in statistical prospective, establishing its stability and regularization properties under both stopping strategies. The algorithm begins with the computation of an initial reconstruction using any suitable techniques like Tikhonov regularization (Tik), filtered back projection (FBP) or total variation (TV), which is used as the foundation for generating the initial graph Laplacian. The reconstruction is made better step by step using an iterative process, during which the graph Laplacian is dynamically re-calibrated to reflect how the solution's structure is changing.
Finally, we present numerical experiments on X-ray Computed Tomography (CT) and phase retrieval CT, demonstrating the effectiveness and robustness of the proposed method and comparing its reconstruction performance under both stopping rules.
\end{quote}
\vspace{.3cm}

\noindent

\begin{description}
\item[\textbf{Keywords}] data-assisted regularization, unknown noise level, statistical inverse problem, graph Laplacian operator, ill-posed problems, phase retrieval, medical imaging. \vspace{.1cm}
\item[\textbf{MSC codes}] 47J06, 05C90, 92C50, 47A52.
\end{description}

\section{Introduction}
\noindent Inverse problems are encountered in various scientific and engineering disciplines, with the objective being to determine an unknown value from indirect and noisy data. Examples include image reconstruction in medical tomography~\cite{barrett1992image}, parameter identification in PDEs~\cite{jadamba2011inverse}, and signal recovery in geophysics~\cite{kearey2013introduction}. Mathematically, these problems are often cast as linear or nonlinear operator equations of the type
\begin{equation}\label{Model eqn}
    \F(u) = v,
\end{equation}  
where $\F : \mathcal{D}(\F) \subset \mathcal{U} \simeq \mathbb{R}^n \to \mathcal{V} \simeq \mathbb{R}^m$ represents the 
forward operator mapping the solution space $\mathcal{U}$ to the data space $\mathcal{V}$. 
Here, $\mathcal{U}$ and $\mathcal{V}$ possess the canonical 
inner product $\langle \cdot, \cdot \rangle$ and induced norm $\|\cdot\|$. In practical scenarios, the exact data $v$ is unavailable and only noisy measurement $v^\delta$ can be observed. Due to their ill-posed nature, inverse problems (\ref{Model eqn}) are highly sensitive to data perturbations, making the stable recovery of the true solution from noisy observations a central computational challenge.
When the noise level $\delta = \|v - v^\delta\|$
is known, various variational~\cite{scherzer2009variational} and iterative regularization methods~\cite{kaltenbacher2008iterative} have been created. Among them, Landweber-type schemes~\cite{fu2023two, hanke1995convergence, jin2025convergence, mittal2025convergence, neubauer2000landweber} have attracted extensive study. Recently, data-driven regularization~\cite{aspri2020data, aspri2020data1, bajpai2025stochastic, zhou2025convergence} has gained prominence for incorporating \emph{a priori} information. In a similar spirit, graph-based regularization methods~\cite{bajpai2025convergence, bianchi2024improved, bianchi2025data} have emerged for linear ill-posed problems, where a graph constructed from the signal encodes essential features of the ground truth via the associated graph Laplacian which is  employed as a guiding operator within the regularization framework.

As demonstrated in~\cite{lou2010image}, imaging tasks like deblurring and tomography perform poorly when the graph $\mathcal G$ is constructed directly from noisy data $v^\delta$, since \( v^\delta \) resides in a different space from the ground-truth image \( u^\dagger \). In order to solve this problem, the authors suggested a preprocessing step that maps the data through a reconstruction operator \( \Psi : \mathcal{V} \to \mathcal{U} \), and the graph is then constructed from \( \Psi(v^\delta) \). The operator \( \Psi \) transfers information from the measurement space to the solution space and can be implemented, for instance, using Tikhonov filtering~\cite{engl1996regularization} or filtered back-projection (FBP)~\cite{kak2001principles}.
Building on~\cite{lou2010image}, the work in~\cite{bianchi2025data} proposed a graph-based variational regularization method, denoted as \(\texttt{graphLa+}\Psi\), which was later extended in~\cite{bajpai2025convergence} to an iterative method equipped with an \emph{a posteriori} stopping rule, thereby alleviating the computational challenges of variational formulations. In these frameworks, preprocessing is performed via a family of reconstruction operators  
\(
\Psi_\Theta : \mathcal{V} \to \mathcal{U},
\)
parameterized by \(\Theta = \Theta(\delta, v^\delta)\), where the parameters may depend on both the noise level and the observed data. The pair \((\Psi_\Theta, \Theta)\) is defined in a general and flexible setting, without the requirement that it constitutes a convergent regularization method.

While iterative methods are effective for addressing ill-posed inverse problems, their inherent semi-convergence behavior necessitates an appropriate stopping criterion to prevent overfitting to noise. A conventional choice is the \emph{discrepancy principle}~\cite{morozov1966solution}, which guarantees regularization when the noise level~$\delta$ is known. However, in many practical scenarios, accurately estimating~$\delta$ is infeasible. To address this limitation, \emph{heuristic stopping rules}~\cite{bajpai2024hanke,jin2017heuristic,kindermann2020convergence,real2024hanke} have been developed, which determine the stopping index directly from the noisy data~$v^\delta$ without requiring explicit noise information. Empirical studies indicate that these rules can perform on par with, or even surpass, methods relying on explicit noise knowledge, though they often require restrictive assumptions on the noise structure. Alternatively, averaging multiple unbiased measurements has been explored to approximate stable solutions, as systematically analyzed in~\cite{harrach2020beyond,harrach2023regularizing,jahn2021modified, jin2023dual} for linear regularization methods with unknown noise in Hilbert spaces.

Graph-based regularization methods have demonstrated promising potential in imaging and related applications, yet the field remains in its early stages, offering promising direction for advancement. Motivated by this, the present work extends the graph-based iterative regularization framework (\texttt{IRMGL+$\Psi$})~\cite{bajpai2025convergence} along four key directions. First, we generalize the approach beyond the linear setting to handle nonlinear ill-posed problems under suitable assumptions, thereby broadening both its theoretical foundation and practical applicability. Second, we introduce a heuristic stopping rule that depends solely on the observed data, eliminating the need for prior noise knowledge. To address its limitations, we further develop a statistical discrepancy principle, derived empirically yet likewise independent of explicit noise information. Both stopping criteria exploit repeated noisy measurements, which form a central element of our design. Finally, to validate the proposed approach, we carry out comprehensive numerical experiments on both X-ray CT and phase retrieval CT problems. These experiments are designed to examine and compare the behavior of the proposed method under two different stopping rules. The results demonstrate superior reconstruction accuracy and stability, underscoring the potential of the proposed framework to advance modern imaging techniques in scientific and medical applications.

To realize this idea, we assume that measurements can be repeatedly acquired under identical experimental conditions, while the true underlying solution remains fixed. This setting aligns with the well-established engineering practice of \emph{signal averaging}, which leverages repeated measurements to mitigate random errors (see \cite{lyons1997understanding} for an introduction and \cite{hassan2010reducing} for a comprehensive survey). Accordingly, we consider a sequence of independent and identically distributed $\mathcal{V}$-valued random variables $v_1, v_2, \ldots,$ within the probability space $(\Omega, \mathbb F, \mathbb P)$, each serving as an unbiased observation of the exact but unknown data $v$. For any $m \geq 1$, we define the mean
\begin{equation}\label{eqn:empricial_mean}
   \hat{v}^{(m)} := \frac{1}{m} \sum_{i=1}^m v_i,   
\end{equation}
which provides an estimator of $v$. Let $\mathbb{E}[\cdot]$ denote the expectation and assuming that $\mathbb{E}[v_1] = v$ with
\[
0 < \sigma^2 := \mathbb{E}\bigl[\|v_1 - v\|^2\bigr] < \infty,
\]
it follows that
\[
\mathbb{E}\bigl[\|\hat{v}^{(m)} - v\|^2\bigr] = \frac{\sigma^2}{m}.
\]
Hence, averaging over $m$ independent measurements reduces the variance $\sigma^2$ by a factor of $m$, yielding a progressively more accurate approximation of the true data as $m \to \infty$.

Building upon the above framework of empirical averaging, we develop a novel and enhanced iterative regularization scheme assisted by the graph Laplacian, referred to as \texttt{E-IRMGL+$\Psi$}. In this approach, the empirical mean $\hat{v}^{(m)}$ serves as the input data for the reconstruction process. As a preliminary step, we introduce a family of reconstruction mappings 
\[
    \Psi_{\theta} : \mathcal{V} \to \mathcal{U},
\]
where the parameter $\theta := \theta(m, \hat{v}^{(m)})$ may depend on both the number of measurements $m$ and the averaged data $\hat{v}^{(m)}$. The proposed iterative scheme is then formulated as
\begin{equation}\label{main iterative scheme}
\begin{cases}
     u_{k+1}^{(m)} = u_k^{(m)} - \alpha_k^{(m)} \F'(u_k^{(m)})^*\big(\F(u_k^{(m)}) - \hat{v}^{(m)}\big) - \beta_k^{(m)} \Delta_{u_k^{(m)}}u_k^{(m)}, \\[1ex]
     u_0^{(m)} = \Psi_\theta(\hat{v}^{(m)}),
\end{cases}
\end{equation}
where $k \geq 1$, $\F'(u_k^{(m)})^*$ denotes the adjoint of the Fr\'echet derivative $\F'(u_k^{(m)})$, $\Delta_{u}$ is the data-dependent graph Laplacian constructed from $u$ (for further information, see Subsection~\ref{subsec: Graph Theory}), and $\alpha_k^{(m)} > 0$ and $\beta_k^{(m)} \geq 0$ denote the step size and regularization weight, respectively. A comprehensive discussion of the method’s formulation, theoretical properties, and computational aspects is provided in Section~\ref{Sec-3} and Subsection~\ref{subsec: The method}, while a schematic illustration of the overall method is shown in Fig. 1.
 It is important to note that the iteration in \eqref{main iterative scheme} can be interpreted as a gradient descent method applied to the functional
\[
\mathcal J(u) := \frac{1}{2} \left (\|\F(u) - \hat v^{(m)}\|^2 + \beta \langle u, \Delta_u u \rangle \right),
\]
where $\beta \geq 0$ is a regularization parameter. The first term enforces data fidelity by penalizing the discrepancy between the model output $\F(u)$ and the measured data $\hat v^{(m)}$. The second term acts as a graph-based regularizer that incorporates structural information encoded in the graph Laplacian $\Delta_u$.
Computing the Fr\'echet derivative of $\mathcal J$ with respect to $u$ yields
\[
\nabla \mathcal J(u) = \F'(u)^*(\F(u) - \hat v^{(m)}) + \beta \, \Delta_u u.
\]
Thus, a gradient descent update with step size $\alpha_k^{(m)} > 0$ naturally leads to
\[
u_{k+1}^{(m)} = u_k^{(m)} - \alpha_k^{(m)} \big(\F'(u_k^{(m)})^*(\F(u_k^{(m)}) - \hat v^{(m)}) + \beta \Delta_{u_k^{(m)}}u_k^{(m)} \big),
\]
which coincides with the scheme defined in \eqref{main iterative scheme}, after allowing the parameters $\alpha_k^{(m)}$ and $\beta_k^{(m)}$ to vary across iterations. Hence, the proposed method can be rigorously viewed as a graph-regularized gradient descent for minimizing $\mathcal J(u)$.

The convergence analysis in this work differs significantly from existing studies~\cite{bajpai2025convergence, bianchi2025data}. First, the use of multiple repeated measurements requires stochastic techniques, in contrast to the deterministic settings of \cite{bajpai2025convergence, bianchi2025data}. Second, the regularization term depends explicitly on both the empirical mean and the number of measurements $m$, adding complexity to the stability analysis and necessitating mild but essential assumptions. Third, to our knowledge, this is the first study to examine heuristic and statistical stopping rules within graph-based regularization. These advancements together constitute the main novel contributions of this work.
\begin{figure}\label{fig:abstract illustration}
    \centering
\begin{tikzpicture}[>=Stealth, node distance=2cm, scale=1]
\draw[thick] (0,0) ellipse (2cm and 3cm);
\draw[thick] (7,0) ellipse (2cm and 3cm);
\draw[fill=gray!30, thick] (-0.3,-2.2) circle (0.6cm);
\draw[fill=gray!30, thick,
      decorate, decoration={random steps, segment length=6pt, amplitude=2pt}]
      (6.7,1.3) circle (1.2cm);
\node at (8,-0.1) {i.i.d $v_i$'s};
\node[circle, fill=black, inner sep=1pt, label=right:$\Bar{u}$] (bar u) at (-1.3,-1) {};
\node[circle, fill=black, inner sep=1.2pt, label= above:$\Psi(\hat v^{(m)})$] (psi hat) at (-0.1,1.8) {};
\node[circle, fill=black, inner sep=1.2pt, label=right:$u_1^{(m)}$] (u1) at (0.3,1.1) {};
\node[circle, fill=black, inner sep=1.2pt, label=right:$u_2^{(m)}$] (u2) at (0.3,0.2) {};
\node[circle, fill=black, inner sep=1.2pt, label=right:$u_{k_m^*}^{(m)}$] (uk heuristic) at (-0.35,-0.8) {};
\node[circle, fill=black, inner sep=1.2pt, label=right:$u_{k_m}^{(m)}$] (uk stat) at (0.85,-0.9) {};
\node[circle, fill=black, inner sep=1.2pt, label=below:$u^\dagger$] (u) at (-0.3,-2.2) {};

\node[circle, fill=black, inner sep=1.5pt, label=right:$v$] (v) at (6,-0.5) {};

\coordinate (vhat) at (6.7,1.2);
\node[circle, fill=yellow, inner sep=2pt, label=above:$\hat v^{(m)}$] at (vhat) {};

\coordinate (v1) at ($(vhat)+(-0.6,0.6)$);
\coordinate (v2) at ($(vhat)+(-0.2,0.8)$);

\node[circle, fill=black, inner sep=1pt, label=above:$v_1$] at (v1) {};
\node[circle, fill=black, inner sep=1pt, label=above:$v_2$] at (v2) {};

\coordinate (v3) at ($(vhat)+(0.2,0.9)$);
\coordinate (v4) at ($(vhat)+(0.9,-0.2)$);

\node[circle, fill=black, inner sep=1pt] at (v3) {};
\node[circle, fill=black, inner sep=1pt] at (v4) {};

\coordinate (v5) at ($(vhat)+(0.4,-0.8)$);
\coordinate (v6) at ($(vhat)+(-0.3,-0.8)$);

\node[circle, fill=black, inner sep=1pt] at (v5) {};
\node[circle, fill=black, inner sep=1pt] at (v6) {};
\coordinate (vm) at ($(vhat)+(-0.9,-0.5)$);
\node[circle, fill=black, inner sep=1pt, label=above:$v_m$] at (vm) {};

\draw[->, thick, bend right=40] (u) to node[midway, above] { $\F(\cdot)$ } (v);

\draw[->] (vhat) to[bend right=15] node[midway, above] { $\Psi$ } (psi hat);

\draw[dotted, bend left=40, ->] (psi hat) to (u1);
\draw[dotted, bend left=40, ->] (u1) to (u2);
\draw[dotted, ->] (u2) to (uk heuristic);
\draw[dotted, ->] (u2) to (uk stat);


\draw[decorate, decoration={coil, aspect=-0.1, amplitude=2mm, segment length=4mm}, ->, blue, thick] 
    (uk heuristic) -- (u);

\draw[decorate, decoration={snake, amplitude=1.3mm, segment length=3mm}, ->, red, thick] 
    (uk stat) -- (u);

\node at ($(uk heuristic)!0.5!(uk stat) + (0.5,-0.8)$) { \tiny $m \to \infty$};
\node at ($(uk heuristic)!0.5!(uk stat) + (-1.15,-0.6)$) { \tiny $m \to \infty$};
\node (mid) at (-1,1.5) {};
\draw[decorate, decoration={zigzag, amplitude=2mm, segment length=8mm}, ->] (psi hat) .. controls(mid) .. (bar u); \draw[decorate, decoration={zigzag, amplitude=2mm, segment length=8mm}, ->] (psi hat) .. controls(mid) .. node[midway, above] {\tiny $ \hspace{-0.7cm} m \to \infty$} (bar u);
\node at (0,-3.5) {$\mathcal{U}$};
\node at (7, -3.5) {$\mathcal{V}$};
\node[align=center, right=2cm of u2] {\texttt{E-IRMGL+\(\Psi\)}};

\end{tikzpicture}
\caption{A schematic representation of the \texttt{E-IRMGL+$\Psi$} approach with two noise level free stopping rules. Independent and identically distributed samples $v_1, v_2, \ldots, v_m$ (black points near $\hat v^{(m)}$) are averaged to obtain $\hat v^{(m)}$ (yellow). The primary reconstructor $\Psi$ is not usually a regularization technique, which is reflected in the piecewise linear trajectory of $\Psi(\hat v^{(m)})$ as $m \to \infty$. Starting from $\Psi(\hat v^{(m)})$, the iterative procedure generates approximations $u_k^{(m)}$. The approximate reconstructions obtained via the heuristic and statistical stopping rules are denoted by $u_{k_m^*}^{(m)}$ and $u_{k_m}^{(m)}$, respectively. As $m \to \infty$, both sequences converge to the ground-truth solution $u^\dagger$. The convergence of $u_{k_m^*}^{(m)}$ is indicated by the blue coiled arrow, whereas the convergence of $u_{k_m}^{(m)}$ is represented by the red wavy arrow.
}
\end{figure}
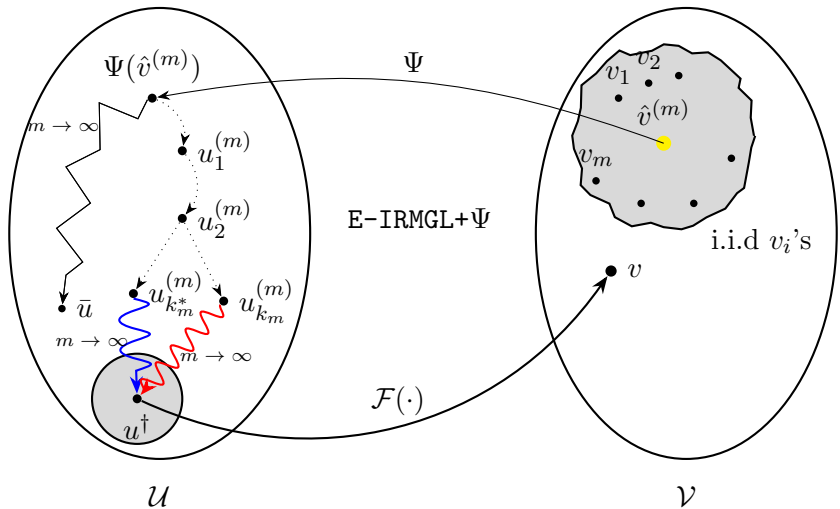

The paper is organized as follows. 
Section~\ref{Sec-3} begins with the essential definitions and preliminaries, followed by the key assumptions of the study. The section then introduces the heuristic and statistical stopping strategies that constitute the main components of the proposed framework. Section~\ref{sec:IRMGP} presents the algorithmic formulation under the statistical stopping rule and establishes stability and convergence results. Section~\ref{sec:heuristic} develops the algorithm under heuristic stopping rule and its convergence analysis. The performance of the proposed approach, along with a comparison of both stopping rules, is demonstrated through numerical experiments in Section~\ref{sec: numerical}. Finally, Section~\ref{sec:conclusions} summarizes the main contributions and outlines future research directions.

\section{The \protect\texttt{E-IRMGL+$\Psi$} method}\label{Sec-3}
In this section, we investigate the iterative scheme introduced in \eqref{main iterative scheme} and provide a comprehensive description. Two stopping rules, formulated independently of any \emph{a priori} knowledge of the noise level, are introduced. We begin by recalling essential notions from graph theory and their connection to image representations (see \cite{keller2021graphs} for a modern introduction).
\subsection{Images and graphs}\label{subsec: Graph Theory}
\begin{definition}
Let $\mathcal S = \{a_1, a_2, \ldots, a_{|\mathcal S|}\}$ be a finite set, $|\mathcal S|$ being its cardinality. A graph over $\mathcal S$ is a pair $\mathcal G=(\mathcal S, e_w)$, where $e_w: \mathcal S \times \mathcal S \to [0,\infty)$ is an edge-weight function satisfying  
\begin{itemize}
    \item Symmetry: $e_w(a,b)=e_w(b,a)$ for all $a,b \in \mathcal S$,  
    \item No self-loops: $e_w(a,a)=0$ for all $a \in \mathcal  S$.  
\end{itemize}
The edge set is $T := \{(a,b)\in \mathcal S\times \mathcal S \mid e_w(a,b)\neq 0\}$. Nodes $a,b\in \mathcal S$ are connected, denoted $a\sim b$, if $e_w(a,b)>0$, where $e_w(a,b)$ quantifies the strength of their connection.
\end{definition}
\begin{definition}
For any function $\mathbf{u}: \mathcal S \to \mathbb{R}$, the graph Laplacian $\Delta \mathbf{u}: \mathcal S \to \mathbb{R}$ is defined by  
\begin{equation}\label{Eqn: Graph Laplacian}
    \Delta \mathbf{u}(a) := \sum_{a \sim b} e_w(a,b)\bigl(\mathbf{u}(a) - \mathbf{u}(b)\bigr).
\end{equation}
In matrix form,  
\begin{equation}\label{Delta = D-W}
    \Delta = D - W,
\end{equation}
where $D$ denotes the diagonal degree matrix with entries $d_{ii} = \sum_{j=1}^{|\mathcal S|} e_w(a_i,a_j)$ and $W = [e_w(a_i,a_j)]_{i,j=1}^{|\mathcal S|}$ is the weight matrix. 
\end{definition}
The operator in \eqref{Delta = D-W} coincides with the action in \eqref{Eqn: Graph Laplacian}.
Note that, defining a graph requires specifying a set of nodes $\mathcal S$ together with an edge-weight function $e_w$. We now describe how such a graph can be constructed from an image.

An image can be viewed as a grid of pixels $a \in \mathcal S$, where each pixel is indexed by $a=(i_a,j_a)$ with $i_a \in \{1,\ldots,p\}$ and $j_a \in \{1,\ldots,q\}$, denotes its horizontal and vertical axes, respectively. Thus, the node set can be written as
\[
\mathcal{S}
\coloneq \bigl\{\, a = (i_a, j_a) \; : \; i_a \in \{1,\ldots,p\},\; j_a \in \{1,\ldots,q\} \,\bigr\}.
\]
For the sake of keeping things simple, we take grayscale images, represented as function
\(
\mathbf{u}: \mathcal S \to [0,1],
\)
assigning an intensity to each pixel, where $0$ and $1$ correspond to black and white, respectively, and intermediate values denote shades of gray. A common approach to define the edge weight between two pixels incorporates both spatial proximity and intensity similarity is
\begin{equation}\label{eqn: edge weight fun}
e_{w_{\mathbf{u}}}(a,b) = g(a,b) \, h_{\mathbf{u}}(a,b),
\end{equation}
where $g(a,b)$ captures the geometric relationship of $\mathcal S$, typically
\[
g(a,b) = \mathbf{1}_{(0,R]}(d(a,b)),
\]
with $R>0$ controlling the neighborhood radius, $\mathbf{1}_{(0,R]}$ the indicator function, and $d(a,b)$ a distance metric (e.g., $d(a,b)=|i_a-i_b|+|j_a-j_b|$ or $d(a,b)=\max\{|i_a-i_b|,|j_a-j_b|\}$). The function $h_{\mathbf{u}}(a,b)$ accounts for intensity similarity, commonly using a Gaussian kernel
\[
h_{\mathbf{u}}(a,b) = \exp\Big(-\frac{|\mathbf{u}(a)-\mathbf{u}(b)|^2}{\lambda}\Big), \quad \lambda>0.
\]
For detailed discussions on constructing graph weights from grayscale images using such kernels, see \cite{bianchi2024improved, bianchi2025data, bronstein2017geometric, calatroni2017graph, gilboa2009nonlocal, lou2010image}. Assume that   \( R < \infty \). Then, each node \( a \in \mathcal S \) is connected to at most \( D_N \) neighboring nodes, i.e.,
\begin{equation*}
    \deg(a) := \#\{\, b \in \mathcal S : g(a,b) \neq 0 \,\} \le D_N, \qquad \text{for all } a \in \mathcal S,
\end{equation*}
where \( \deg(a) \) denotes the degree of node \( a \).

\subsection{Assumptions and stopping rules}
In this subsection, we first introduce some necessary assumptions which are essential for the subsequent analysis.
\begin{assumption}\label{assump:Fi}
For every \(u, w \in \mathcal{B}_{3\wp}(u_0^{(m)}) \subset \mathcal{D}(\F)\), the following properties hold, 
where \(\mathcal{B}_{\wp}(u_0^{(m)})\) denotes the closed ball of radius~\(\wp\) centered at~\(u_0^{(m)}\).
\begin{enumerate}\label{ass:primary}
    \item[(A1)] The operator $\F$ is Fr\'echet differentiable with derivative $\F'(u)$. Moreover, the mapping $u \mapsto \F'(u)$ is continuous.
    
    \item[(A2)] The derivatives are uniformly bounded, i.e.,
    \(
    \|\F'(u)\| \leq B,
    \)
    for some constant $B > 0$ independent of $u$.
    
    \item[(A3)] (\textbf{Tangential cone condition}) There exists a constant $\eta \in [0,1)$ such that
    \[
    \|\F(u) - \F(w) - \F'(w)(u - w)\| \leq \eta \, \|\F(u) - \F(w)\|
    \]
    \item[(A4)] There exists a constant $L \geq 0$ such that
    \[
    \|\F'(u) - \F'(w)\| \leq L \, \|u - w\|.
    \]
    \item [(A5)]\label{ass:A5}  There exists $u^\dagger \in \mathcal{B}_{\wp}(u_0^{(m)})$ such that $\F(u^\dagger)=v$.
\end{enumerate}
\end{assumption}

In Assumption~\ref{ass:primary}, conditions (A1) and (A2) ensure that the derivative 
$\mathcal{F}'(u)$ is locally bounded and continuous within the closed ball 
$\mathcal{B}_{3\wp}(u_0^{(m)})$. When $\mathcal{F}$ is a bounded linear operator, 
conditions (A1)--(A4) of Assumption~\ref{ass:primary} are automatically satisfied with 
$\eta = 0$. In the nonlinear setting, condition (A3) imposes a restriction on 
the degree of nonlinearity of $\mathcal{F}$, and this assumption has been verified 
for various classes of nonlinear inverse problems~\cite{hanke1995convergence}. 
Condition (A4) corresponds to the well-known notion of $L$-smoothness, which requires 
that the derivative $\mathcal{F}'(u)$ is uniformly controlled by a parameter, thereby 
preventing arbitrary variations. This property plays a fundamental role in establishing 
convergence guarantees for most gradient-type methods~\cite{bottou2018optimization} 
and has been extensively validated in practical applications~\cite{jin2013iteratively}.
\begin{remark}
  The condition (A5) of the aforementioned assumption  guarantees that the exact solution $u^\dagger$ is in close proximity to the initial reconstruction  $u_0^{(m)}=\Psi_\theta(\hat v^{(m)})$, and is compatible with the forward model. For several standard choices of the reconstruction operator $\Psi_\theta$, this assumption is naturally satisfied. For instance, when $\Psi_\theta$ is chosen to be the Tikhonov regularizer, i.e.,
\[
 \Psi_\theta^{\mathrm{Tik}}(\hat v^{(m)}) = \arg\min_{u \in \mathcal{D}(\F)}\Bigl\{\tfrac{1}{2}\|\F(u)-\hat v^{(m)}\|^2 + \tfrac{\theta}{2}\|u\|^2\Bigr\},
\]
where $\theta:= \theta(m, \hat v ^{(m)})$ is the regularization parameter. It follows from the fact that, under standard assumptions Tikhonov regularization along with the appropriate regularization parameter $\theta$ becomes a convergent regularization method.  Consequently, for sufficiently small values of~$\theta$, the exact solution $u^\dagger$ lies within the ball $\mathcal{B}_{\wp}(u_0^{(m)})$ for a suitably chosen radius $\wp>0$. Further technical details can be found in~\cite{bajpai2025convergence}.
\end{remark}
\noindent Building upon these structural conditions, we next impose assumptions on the measurement data.  
\begin{assumption}\label{assump:iid}
Let $\{v_i\}_{i \geq 1}$ be a sequence of i.i.d.
$\mathcal{V}$-valued random variables defined on a probability space 
$(\Omega, \mathbb{F}, \mathbb{P})$. We assume that they are unbiased with
\(
\mathbb{E}[v_1] = v,
\)
and possess finite, nonzero variance
\[
0 < \sigma^2 := \mathbb{E}\bigl[\|v_1 - v\|^2\bigr] < \infty.
\]
\end{assumption}
Then, for the empirical mean \eqref{eqn:empricial_mean} we obtain  
\[
\mathbb{E}\bigl[\|\hat{v}^{(m)} - v\|^2\bigr]   
= \frac{1}{m^2} \sum_{i,j=1}^m \mathbb{E}\bigl[\langle v_i - v,\, v_j - v\rangle\bigr]   
= \frac{1}{m^2} \, \sum_{i=1}^{m}\mathbb{E}\bigl[\|v_i - v\|^2\bigr]   
= \frac{\sigma^2}{m}.
\]  
This calculation shows that the variance is reduced by a factor of $m$. 
Consequently, it is reasonable to expect that using the averaged data 
$\hat{v}^{(m)}$ yields a more accurate approximation of $v$ 
then relying on any single observation $v_i$.

Next, we introduce a heuristic stopping rule for the \texttt{E-IRMGL+$\Psi$} method \eqref{main iterative scheme} that does not rely on the noise level but instead utilizes empirical information derived from the measurement data. The proposed rule is motivated by the ideas in \cite{hanke1996general, kindermann2020convergence}.
\begin{rulethm}[Heuristic principle]\label{rule:heuristic_discrepancy}
Let $\varrho \geq 1$ and let 
\[\Omega(k, \hat v^{(m)}) \coloneq (k + \varrho) \left\|\F(u_k^{(m)}) - \hat v^{(m)} \right\|^2.\]
Define $k_m^* \coloneq k_m^*(\hat{v}^{(m)})$ is an integer satisfying
\[k_m^* \in \arg\min \left \{ \Omega(k, \hat v^{(m)})\; \colon\;  k \in [0, k_\infty]  \right\},\]
where $k_\infty \coloneq  k_\infty(\hat v^{(m)})$ is the largest integer such that $u_k^{(m)} \in \mathcal{D}(\mathcal{F})$ for all $k \in [0, k_\infty].$ 
\end{rulethm}
Bakushinskii’s veto~\cite{bakushinskii1985remarks} asserts that heuristic stopping rules cannot guarantee convergence in the worst-case setting for any regularization method. Nevertheless, under additional assumptions on the noise, convergence of heuristic rules has been established for several regularization approaches~\cite{bajpai2024hanke,fu2020analysis,hanke1996general, jin2010heuristic,jin2016hanke, jin2017heuristic, liu2020heuristic, real2024hanke, zhang2018heuristic}. Motivated by the studies~\cite{hanke1996general, jin2016hanke}, we adopt a similar condition in the context of measurement data and formulate the following assumption.
\begin{assumption}\label{assump:empirical}
There exists a constant $\kappa > 0$, independent of $m$, such that with high probability
\[
\|\hat v^{(m)} - \F(u)\| \;\geq\; \kappa \, \|\hat v^{(m)} - v\|
\]
for every $u \in S(\hat v^{(m)}) := \{\, u^{(m)}_k : k \in [0, k_\infty] \,\}$, where $\{u^{(m)}_k\}$ is the sequence generated by \texttt{E-IRMGL+}$\Psi$ \eqref{main iterative scheme}.
\end{assumption}
\begin{remark}
The above assumption guarantees that the residual 
$\|\hat v^{(m)} - \F(u)\|$ does not fall significantly below 
$\|\hat v^{(m)} - v\|$, thereby preventing degenerate noise realizations. 
Although this condition cannot be verified in practice, it is essential to circumvent Bakushinskii’s veto~\cite{bakushinskii1985remarks} and is standard in the analysis of heuristic parameter choice rules~\cite{bajpai2024hanke,jin2010heuristic,jin2017heuristic,kindermann2020convergence,real2024hanke}. 
Its practical validity is typically supported by numerical evidence; see~\cite{hubmer2022numerical} for a detailed study in the context of nonlinear Landweber iteration.
\end{remark}
To address the shortcomings of Assumption~\ref{assump:empirical}, we also introduce a stopping rule that leverages the average over multiple unbiased measurements. Recall that
\[
\mathbb{E}\bigl[\|\hat{v}^{(m)} - v\|^2\bigr] = \frac{\sigma^2}{m}.
\]
To estimate $\sigma^2$, we consider the empirical variance
\begin{equation}\label{eqn:empirial_varience}
    z_m^2 := \frac{1}{m-1} \sum_{i=1}^m \|v_i - \hat{v}^{(m)}\|^2,
\end{equation}
which satisfies $\mathbb{E}[z_m^2] = \sigma^2$ (cf. Assumption~\ref{assump:iid}). 
Thus, $z_m^2$ serves as an unbiased estimator of the variance $\sigma^2$, a 
standard fact in statistics (see also \cite{harrach2020beyond, jin2023dual}). Consequently, we may 
use $z_m / \sqrt{m}$ as an estimator of $\|\hat{v}^{(m)} - v\|$. This 
motivates the following statistical variant of the discrepancy principle, see~\cite{jin2023dual,yu2025landweber}.

\begin{rulethm}[Statistical discrepancy principle]\label{rule:statistical_discrepancy}
Define $k_m$ as the first integer satisfying 
\begin{equation}\label{eq:stopping_rule}
\|\F(u^{(m)}_{k_m}) - \hat{v}^{(m)}\| \leq \frac{\tau_m}{\sqrt{m}} z_m,
\end{equation}
where $\tau_m > 1$ may depend on $m$. 
\end{rulethm}
It is worth emphasizing that, unlike the deterministic stopping index in the classical discrepancy principle~\cite{morozov1966solution} and the Hanke–Raus heuristic rule~\cite{real2024hanke}, the stopping indices $k_m^*$ and $k_m$ determined by \cref{rule:heuristic_discrepancy} and \cref{rule:statistical_discrepancy} are inherently \emph{random variables}. Consequently, their analysis necessitates the use of statistical tools.
So, for the convergence analysis of \texttt{E-IRMGL+$\Psi$} \eqref{main iterative scheme} under these two stopping rules, a key step is the construction of an event $\Gamma_m$ satisfying $\mathbb{P}(\Gamma_m) \to 1$ as $m \to \infty$, in line with the arguments of \cite{harrach2020beyond,jin2023dual}.  
For technical reasons, the subsequent regularization analysis requires that  
\[
\tau_m \to \infty \quad \text{and} \quad \frac{\tau_m}{\sqrt{m}} \to 0 \quad \text{as } m \to \infty.
\]  
Accordingly, we define  
\[
\Gamma_m := \Bigl\{\, |z_m - \sigma| \leq \tfrac{\sigma}{2}, \;\; \|\hat v^{(m)} - v\| \leq \tfrac{\tau_m}{\mathcal{H}\sqrt{m}}z_m \Bigr\},
\]  
where  
\[
\mathcal{H} > \frac{1+ \eta}{1 - \eta}
\]
with $\eta \in [0,1)$.
On this event, the subsequent convergence analysis will be carried out. 
With this construction, one can obtain  
\[
\mathbb{P}(\Gamma_m) \;\longrightarrow\; 1 \quad \text{as } m \to \infty.
\]

\section{Convergence analysis under statistical discrepancy principle}\label{sec:IRMGP} 
In this section, we aim to establish the regularization property of the proposed \texttt{E-IRMGL+\(\Psi\)} method with multiple repeated measurements, when terminated using \cref{rule:statistical_discrepancy}.

\subsection{The method}\label{subsec: The method}
We begin by providing a detailed exposition of the method.
To this end, we introduce the step size $\alpha_k^{(m)}$ and the weight parameter $\beta_k^{(m)}$, which are designed to enhance the speed of convergence of~\eqref{main iterative scheme}. A natural strategy is to select $\alpha_k^{(m)}$ and $\beta_k^{(m)}$ at each iteration so that the generated sequence remains sufficiently close to a solution of~\eqref{Model eqn}. Based on this principle, we define the parameters as
\begin{equation}\label{alpha}
 0 < \zeta \leq  \alpha_k^{(m)} = 
       \min \left\{ \frac{\zeta
    _0\|\F(u_k^{(m)}) - \hat v^{(m)}\|}{\|\F'(u_k^{(m)})^* (\F(u_k^{(m)}) -\hat v^{(m)})\|}, \zeta_1 \right\}, 
\end{equation}
\begin{equation}\label{beta}
   \beta_k^{(m)} =  \begin{cases}
      \min \left\{ \frac{\nu_0 \|\F (u_k^{(m)}) - \hat v^{(m)}\|^2}{\left\|\Delta_{u_k^{(m)}} u_k^{(m)}\right\|}, \frac{\nu_1}{\left\|\Delta_{u_k^{(m)}} u_k^{(m)}\right\|}, \nu_2 \right\} , & \text{if } \left\|\Delta_{u_k^{(m)}} u_k^{(m)}\right\| \neq 0\\ 
      0, &\text{if } \left\|\Delta_{u_k^{(m)}} u_k^{(m)}\right\| = 0,
    \end{cases} 
\end{equation}
where \(\zeta_0, \zeta_1, \nu_0, \nu_1\) and \(\nu_2\) denotes the fixed positive constants. We impose the following condition
\[
\zeta > \frac{\nu_0(\wp + \nu_1) + \zeta_0^2}{1 - \eta - \frac{1 + \eta}{ \mathcal H}}.
\]
This choice will later play a crucial role in ensuring the positivity of the constant introduced in \eqref{assum: On C}.
The pseudo-code for the iterative scheme~\eqref{main iterative scheme}, when stopped using \cref{rule:statistical_discrepancy}, is given in 
\Cref{alg:buildtree}, where at each iteration the graph Laplacian 
\(\Delta_{u_k^{(m)}}\) is reconstructed from the current iterate \(u_k^{(m)}\), 
ensuring adaptive updating of the regularization term throughout the process.

\begin{algorithm}
\caption{\texttt{E-IRMGL+\(\Psi\)} with statistical discrepancy principle}
\label{alg:buildtree}
\begin{algorithmic}[1]
\State \textbf{Input:} Operator $\F$, measurements $v_1, v_2, \ldots v_m,$ initial reconstructor $\Psi_\theta$, parameters $ \theta, \lambda, R,$ and $\tau_m>1$.
\State \textbf{Set:} $\hat v^{(m)} \coloneq \frac{1}{m} \sum_{i=1}^{m}v_i$ and $z_m \coloneq \sqrt{\frac{1}{m-1} \sum_{i=1}^{m} \|v_i - \hat v^{(m)}\|^2}.$
\State \textbf{Initialize:} $u_0^{(m)} \coloneq \Psi_\theta(\hat{v}^{(m)})$.
\While{$\|\F(u_k^{(m)}) - \hat{v}^{(m)}\| > \tfrac{\tau_m}{\sqrt{m}}z_m$}
    \State Compute $\Delta_{u_k^{(m)}}$ from $u_k^{(m)}, R$, and $\lambda$ as defined in \eqref{Eqn: Graph Laplacian}.
    \State Update the iterate
    \[
       u_{k+1}^{(m)} \coloneq u_k^{(m)} 
       - \alpha_k^{(m)} \F'(u_k^{(m)})^{*}\!\left(\F(u_k^{(m)}) - \hat v^{(m)}\right)
       - \beta_k^{(m)} \Delta_{u_k^{(m)}}u_k^{(m)},
    \]
    where $\alpha_k^{(m)}$ and $\beta_k^{(m)}$ are given by~\eqref{alpha} and~\eqref{beta}.
\EndWhile
\State \textbf{Output:} $u_{k_m}^{(m)}$.
\end{algorithmic}
\end{algorithm}

The following result establishes key properties of the iterates from \Cref{alg:buildtree}, which are crucial for proving finite termination of the scheme.  
\begin{lemma}\label{Lemma: Monotonicity}
Suppose that Assumptions~\ref{assump:Fi} and~\ref{assump:iid} hold. 
Let the sequence $\{u_k^{(m)}\}$ be generated by Algorithm~\ref{alg:buildtree}, 
and let $k_m$ denote the stopping index defined by the statistical discrepancy principle 
(see~\cref{rule:statistical_discrepancy}). 
Assume that
\begin{equation}\label{assum: On C}
   \mathcal{C} \coloneq \Big( 1-\eta - \tfrac{1+\eta}{\mathcal{H}} \Big)\zeta 
   - \nu_0(\wp + \nu_1) - \zeta_0^2 > 0.
\end{equation}
For any integer $n \leq k_m$, the following statements hold:
\begin{enumerate}
    \item[(i)] \textbf{Monotonicity:} For all $0 \leq k < n$,
    \[
       \left\|u_{k+1}^{(m)} - u^\dagger\right\| \leq \left\|u_k^{(m)} - u^\dagger\right\|.
    \]
    \item[(ii)] \textbf{Stability and residual control:} 
    The iterates satisfy $u_k^{(m)} \in \mathcal{B}_\wp(u^\dagger)$ for all $0 \leq k \leq n$, and
    \[
       \sum_{k=0}^{n} \left\|\F(u_k^{(m)}) - \hat v^{(m)}\right\|^2 
       \leq \frac{1}{2\mathcal C}\,\left\|u_0^{(m)} - u^\dagger\right\|^2.
    \]
\end{enumerate}
\end{lemma}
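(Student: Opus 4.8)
The plan is to prove both assertions simultaneously by induction on the iteration index, the engine being a single one-step \emph{descent inequality}
\[
\|u_{k+1}^{(m)} - u^\dagger\|^2 \le \|u_k^{(m)} - u^\dagger\|^2 - 2\mathcal{C}\,\big\|\F(u_k^{(m)}) - \hat v^{(m)}\big\|^2,
\]
valid for every $k < k_m$ on the event $\Gamma_m$. Write $r_k := \F(u_k^{(m)}) - \hat v^{(m)}$, $s_k := \F'(u_k^{(m)})^* r_k$ and $g_k := \Delta_{u_k^{(m)}} u_k^{(m)}$, so the update reads $u_{k+1}^{(m)} - u_k^{(m)} = -\alpha_k^{(m)} s_k - \beta_k^{(m)} g_k$. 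Expanding the square,
\[
\|u_{k+1}^{(m)} - u^\dagger\|^2 - \|u_k^{(m)} - u^\dagger\|^2 = 2\langle u_{k+1}^{(m)} - u_k^{(m)},\, u_k^{(m)} - u^\dagger\rangle + \|u_{k+1}^{(m)} - u_k^{(m)}\|^2,
\]
and the whole proof reduces to bounding the two terms on the right. The induction hypothesis at step $k$ is $u_k^{(m)} \in \mathcal{B}_\wp(u^\dagger)$; together with (A5) this gives $u_k^{(m)} \in \mathcal{B}_{2\wp}(u_0^{(m)}) \subset \mathcal{B}_{3\wp}(u_0^{(m)})$, so Assumption~\ref{assump:Fi} is in force at $u_k^{(m)}$ and the tangential cone condition may be applied with $u = u^\dagger$, $w = u_k^{(m)}$.

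The crucial ingredient is the noise--residual relation. On $\Gamma_m$ one has $\|\hat v^{(m)} - v\| \le \tfrac{\tau_m}{\mathcal H \sqrt m} z_m$, while for $k < k_m$ the stopping rule has not yet triggered, i.e.\ $\|r_k\| > \tfrac{\tau_m}{\sqrt m} z_m$; hence $\|\hat v^{(m)} - v\| \le \tfrac{1}{\mathcal H}\|r_k\|$. Using $\F(u^\dagger) = v$ and $\F(u_k^{(m)}) - v = r_k + (\hat v^{(m)} - v)$, condition (A3) controls the linearization error, and a short computation gives
\[
\langle s_k, u_k^{(m)} - u^\dagger\rangle = \langle r_k, \F'(u_k^{(m)})(u_k^{(m)} - u^\dagger)\rangle \ge (1-\eta)\|r_k\|^2 - (1+\eta)\|r_k\|\,\|\hat v^{(m)} - v\|,
\]
into which $\|\hat v^{(m)} - v\| \le \tfrac{1}{\mathcal H}\|r_k\|$ is inserted to produce the decisive factor $\big(1-\eta-\tfrac{1+\eta}{\mathcal H}\big)\|r_k\|^2$. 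This is precisely the quantity appearing in $\mathcal C$, and it is where the requirement $\mathcal H > \tfrac{1+\eta}{1-\eta}$ enters, guaranteeing the factor is positive.

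It remains to absorb the step-size and graph terms. Since $\alpha_k^{(m)} \ge \zeta$ and the coefficient is positive, the data part of the inner-product term contributes at most $-2\zeta\big(1-\eta-\tfrac{1+\eta}{\mathcal H}\big)\|r_k\|^2$, while the graph part is handled by Cauchy--Schwarz together with $u_k^{(m)} \in \mathcal{B}_\wp(u^\dagger)$, giving $-2\beta_k^{(m)}\langle g_k, u_k^{(m)} - u^\dagger\rangle \le 2\beta_k^{(m)}\|g_k\|\,\wp \le 2\nu_0\wp\|r_k\|^2$, where the last step uses $\beta_k^{(m)}\|g_k\| \le \nu_0\|r_k\|^2$ from~\eqref{beta} (the bound being trivial when $g_k=0$). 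For the squared increment I would avoid the cross term via $\|a+b\|^2 \le 2\|a\|^2 + 2\|b\|^2$, then use $\alpha_k^{(m)}\|s_k\| \le \zeta_0\|r_k\|$ from~\eqref{alpha} to get $2(\alpha_k^{(m)})^2\|s_k\|^2 \le 2\zeta_0^2\|r_k\|^2$, and multiply the two bounds $\beta_k^{(m)}\|g_k\| \le \nu_0\|r_k\|^2$ and $\beta_k^{(m)}\|g_k\| \le \nu_1$ to obtain $2(\beta_k^{(m)})^2\|g_k\|^2 \le 2\nu_0\nu_1\|r_k\|^2$. Collecting all contributions yields exactly $-2\mathcal C\|r_k\|^2$ with $\mathcal C$ as in~\eqref{assum: On C}, and positivity of $\mathcal C$ delivers the descent inequality. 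Monotonicity (i) is then immediate, and since $\|u_{k+1}^{(m)} - u^\dagger\| \le \|u_k^{(m)} - u^\dagger\| \le \wp$ the induction on ball-membership closes, giving the containment in (ii); telescoping the descent inequality over $k$ and discarding the nonnegative term $\|u_{n+1}^{(m)} - u^\dagger\|^2$ produces the residual-sum bound. The main obstacle is the graph-regularization contribution, which carries no favorable sign: the argument hinges entirely on the three-way minimum defining $\beta_k^{(m)}$, which trades the length $\|g_k\|$ against $\|r_k\|^2$ and against the constants $\nu_0,\nu_1$, so that every graph term is absorbed into a multiple of $\|r_k\|^2$ and the balance condition $\mathcal C > 0$ can carry the descent.
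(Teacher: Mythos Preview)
Your proposal is correct and follows essentially the same approach as the paper: both proofs expand $\|u_{k+1}^{(m)}-u^\dagger\|^2-\|u_k^{(m)}-u^\dagger\|^2$ into an inner-product term plus a squared-increment term, bound the former using the tangential cone condition together with the noise--residual relation $\|\hat v^{(m)}-v\|\le \tfrac{1}{\mathcal H}\|r_k\|$ valid on $\Gamma_m$ for $k<k_m$, bound the latter via the step-size definitions ($\alpha_k^{(m)}\|s_k\|\le\zeta_0\|r_k\|$, $\beta_k^{(m)}\|g_k\|\le\min\{\nu_0\|r_k\|^2,\nu_1\}$), and collect terms to obtain the descent inequality with constant $\mathcal C$; the induction on ball-membership and the telescoping argument are identical.
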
 
\begin{proof}
By condition (A5) in Assumption~\ref{assump:Fi}, the initial iterate satisfies 
$u_0^{(m)} \in \mathcal{B}_\wp(u^\dagger)$. Assume further that 
$u_k^{(m)} \in \mathcal{B}_\wp(u^\dagger)$ for some iteration index $k$. 
It then follows that $u_k^{(m)} \in \mathcal{B}_{2\wp}(u_0^{(m)})$. 
We now define
\[
g_k^{(m)} := u_k^{(m)} - u^\dagger \quad \text{and} \quad h_k^{(m)} := \alpha_k^{(m)} \F'(u_k^{(m)})^*(\mathcal{F}(u_k^{(m)}) - \hat{v}^{(m)}) + \beta_k^{(m)} \Delta_{u_k^{(m)}}u_k^{(m)}.
\]
With these definitions in place, and by applying the update rule 
from \Cref{alg:buildtree}, we obtain
\begin{equation}\label{split eqn}
    \|g_{k+1}^{(m)}\|^2 - \|g_k^{(m)}\|^2 = \|h_k^{(m)}\|^2 - 2\langle g_k^{(m)}, h_k^{(m)} \rangle.
\end{equation}
Our next step is to examine the two terms on the right-hand side of \eqref{split eqn}. To begin, we consider the first term. By using the definitions of $\alpha_k^{(m)}$ and $ \beta_k^{(m)}$, we obtain 
\begin{align}\label{dk}
 \nonumber   \|h_k^{(m)}\|^2 &= \| \alpha_k^{(m)} \F'(u_k^{(m)})^*(\F(u_k^{(m)}) - \hat{v}^{(m)}) + \beta_k^{(m)} \Delta_{u_k^{(m)}}u_k^{(m)}\|^2 \\ \nonumber
    & \leq 2 \zeta_0^2 \|\F(u_k^{(m)}) - \hat{v}^{(m)}\|^2 + 2\nu_0 \nu_1\|\F(u_k^{(m)}) - \hat{v}^{(m)}\|^2 \\ 
    & \leq 2(\zeta_0^2 + \nu_0 \nu_1)\|\F(u_k^{(m)}) - \hat{v}^{(m)}\|^2. 
\end{align}
Similarly, by utilizing (A3) of Assumption~\ref{assump:Fi}, we have the estimate for the second term as
\begin{align}\label{ek, dk}
 \nonumber  -\langle g_k^{(m)}, h_k^{(m)} \rangle &= \alpha_k^{(m)} \langle \F'(u_k^{(m)})(u^\dagger - u_k^{(m)}), \F(u_k^{(m)}) - \hat v^{(m)} \rangle - \beta_k^{(m)} \langle g_k^{(m)}, \Delta_{u_k^{(m)}}u_k^{(m)} \rangle \\ \nonumber
 & = \alpha_k^{(m)} \langle \F(u_k^{(m)}) - \hat v^{(m)} +  \F'(u_k^{(m)})(u^\dagger - u_k^{(m)}), \F(u_k^{(m)}) - \hat v^{(m)} \rangle \\ \nonumber
 & \quad - \alpha_k^{(m)} \langle \F(u_k^{(m)}) - \hat v^{(m)}, \F(u_k^{(m)}) - \hat v^{(m)} \rangle + \beta_k^{(m)}\wp \| \Delta_{u_k^{(m)}}u_k^{(m)}\|  \\ \nonumber
   & =  \alpha_k^{(m)} \|\F(u_k^{(m)}) - \hat v^{(m)}\|\left[ \eta \|\F(u_k^{(m)}) - \hat v^{(m)}\| + (1 + \eta)\|v - \hat v^{(m)}\|\right] \\ \nonumber
   & \quad -  \alpha_k^{(m)} \|\F(u_k^{(m)}) - \hat v^{(m)}\|^2 +  \beta_k^{(m)}\wp \| \Delta_{u_k^{(m)}}u_k^{(m)}\|   \\ \nonumber
   & \leq - (1 - \eta)\alpha_k^{(m)}\|\F(u_k^{(m)}) - \hat v^{(m)}\|^2 + \wp\nu_0 \|\F(u_k^{(m)}) -\hat  v^{(m)}\|^2 \\
   \nonumber 
   & \quad + (1 + \eta)\alpha_k^{(m)}\|v - \hat v^{(m)}\|\|\F(u_k^{(m)}) - \hat v^{(m)}\|.
\end{align}
In deriving the last inequality, we have made use of \eqref{beta}. Since  $k < n \leq k_m$, it follows that
\[
\|\F(u_k^{(m)}) - \hat v^{(m)}\| >\frac{\tau_m}{\sqrt{m}} z_m.
\]
Furthermore, from the properties of the space $\Gamma_m$, we obtain
\[
\|v - \hat v^{(m)}\| \leq \frac{\tau_m}{\mathcal{H}\sqrt{m}} z_m.
\]
Combining these observations yields the bound
\begin{equation}\label{eqn: gk and hk}
        - \langle g_k^{(m)}, h_k^{(m)} \rangle 
 \leq - \left(  1 - \eta - \frac{1 + \eta}{\mathcal{H}} \right)
   \alpha_k^{(m)} \|\F(u_k^{(m)}) - \hat v^{(m)}\|^2
   + \wp \nu_0\|\F(u_k^{(m)}) - \hat v^{(m)}\|^2.
\end{equation}
By substituting \eqref{dk} and \eqref{eqn: gk and hk} into (\ref{split eqn}), we obtain
\begin{equation}\label{Concluding eqn of first Lemma}
         \|g_{k+1}^{(m)}\|^2 - \|g_k^{(m)}\|^2 \leq -2\left[ \left( 1 - \eta - \frac{1 + \eta}{\mathcal{H}}\right)\zeta -\nu_0(\wp + \nu_1) - \zeta_0^2\right]
     \|\F(u_k^{(m)}) - \hat v^{(m)}\|^2.
\end{equation}
Finally, substituting (\ref{assum: On C}) into (\ref{Concluding eqn of first Lemma}) yields
\begin{equation}\label{Concluding eqn of first Lemma_II}
         \|g_{k+1}^{(m)}\|^2 - \|g_k^{(m)}\|^2 \leq -2\mathcal{C}\|\F(u_k^{(m)}) - \hat v^{(m)}\|^2.
\end{equation}
Thus, the monotonicity holds for $0 \leq k <n$.

\noindent Next, we turn to the proof of assertion $(ii)$. The inequality (\ref{Concluding eqn of first Lemma_II}) ensures that
\[
\|u_{k+1}^{(m)} - u^\dagger\| \leq \|u_{k}^{(m)} - u^\dagger\| \leq \wp,
\]
which suggests that $u_{k+1}^{(m)} \in \mathcal{B}_\wp(u^\dagger)$. From $k = 0$ to $k = n$, the inequality (\ref{Concluding eqn of first Lemma_II}) can be summed to produce
\begin{equation}\label{Concluding eqn of first Lemma in C}
   \sum_{k=0}^{n} \left[\|g_{k+1}^{(m)}\|^2 - \|g_k^{(m)}\|^2\right] \leq -2\mathcal{C}  \sum_{k=0}^{n}\|\F(u_k^{(m)}) - \hat v^{(m)}\|^2.
\end{equation}
In conclusion, \eqref{Concluding eqn of first Lemma in C} implies that
\begin{equation*}
 \sum_{k=0}^{n}\|\F(u_k^{(m)}) -\hat v^{(m)}\|^2 \leq \frac{1}{2\mathcal{C}} \|g_{0}^{(m)}\|^2 = \frac{1}{2\mathcal{C}}\|u_0^{(m)} - u^\dagger\|^2.
\end{equation*}
 This establishes the desired result.
\end{proof}
The next result, shows the finite iteration termination property of  Algorithm \ref{alg:buildtree}. 
\begin{theorem}
  Given that the conditions of Lemma~\ref{Lemma: Monotonicity} hold, and \Cref{alg:buildtree} is initialized with a starting point $u_0^{(m)}$. Then the algorithm exhibits finitely many iterations, i.e., there exist an integer $k_m < \infty$ such that 
    \begin{equation*}
      \|\F(u^{(m)}_{k_m}) - \hat{v}^{(m)}\| \leq \frac{\tau_m}{\sqrt{m}} z_m < \|\F(u_k^{(m)}) - \hat v^{(m)}\|, \quad 0 \leq k < k_m.
    \end{equation*}
\end{theorem}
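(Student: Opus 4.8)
The plan is to argue by contradiction, assuming that the stopping criterion of \cref{rule:statistical_discrepancy} is never met, so that the iteration runs indefinitely and produces an infinite sequence $\{u_k^{(m)}\}$. Under this assumption, for every $k \geq 0$ we have
\[
\|\F(u_k^{(m)}) - \hat v^{(m)}\| > \frac{\tau_m}{\sqrt{m}} z_m,
\]
which means the residual stays bounded away from zero by the fixed positive quantity $\frac{\tau_m}{\sqrt{m}} z_m$. The idea is to combine this lower bound with the summability estimate established in Lemma~\ref{Lemma: Monotonicity}(ii).

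\noindent First I would invoke Lemma~\ref{Lemma: Monotonicity}, whose hypotheses are exactly those assumed here (Assumptions~\ref{assump:Fi} and~\ref{assump:iid} together with $\mathcal C > 0$). Its part (ii) guarantees that for every integer $n \leq k_m$,
\[
\sum_{k=0}^{n} \|\F(u_k^{(m)}) - \hat v^{(m)}\|^2 \leq \frac{1}{2\mathcal C}\,\|u_0^{(m)} - u^\dagger\|^2.
\]
The right-hand side is a fixed finite constant, independent of $n$; in particular, using (A5) it is bounded by $\frac{\wp^2}{2\mathcal C}$. Next I would substitute the contradiction hypothesis into the left-hand side: since each residual exceeds $\frac{\tau_m}{\sqrt m}z_m$, the partial sum is bounded below by $(n+1)\bigl(\frac{\tau_m}{\sqrt m}z_m\bigr)^2$. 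Chaining the two bounds gives
\[
(n+1)\,\frac{\tau_m^2}{m}\,z_m^2 \;\leq\; \frac{1}{2\mathcal C}\,\|u_0^{(m)} - u^\dagger\|^2,
\]
valid for all $n$, which forces the left side to stay bounded as $n \to \infty$. Since $\frac{\tau_m^2}{m}z_m^2 > 0$ is a fixed positive number for the fixed realization under consideration (note $z_m > 0$ because $\sigma^2 > 0$ and $\tau_m > 1$), letting $n \to \infty$ yields a contradiction. Hence the stopping index $k_m$ must be finite, and by its very definition as the \emph{first} integer satisfying \eqref{eq:stopping_rule}, the strict inequality $\frac{\tau_m}{\sqrt m}z_m < \|\F(u_k^{(m)}) - \hat v^{(m)}\|$ holds for all $0 \leq k < k_m$, while $\|\F(u_{k_m}^{(m)}) - \hat v^{(m)}\| \leq \frac{\tau_m}{\sqrt m}z_m$ holds at termination.

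\noindent The argument is essentially routine once Lemma~\ref{Lemma: Monotonicity} is in hand, so the main subtlety is not a deep obstacle but rather a bookkeeping point: one must ensure the summability bound in Lemma~\ref{Lemma: Monotonicity}(ii) genuinely applies for \emph{arbitrarily large} $n$ under the contradiction hypothesis. Since the lemma is stated for $n \leq k_m$, under the assumption that the loop never terminates every finite $n$ qualifies (the iterates all remain in $\mathcal D(\F)$ and in $\mathcal B_\wp(u^\dagger)$ by part (ii)), so the bound is available for all $n$ and the telescoping/summation step is justified. A secondary point worth a remark is that the entire statement is understood to hold on the event $\Gamma_m$ (which was used in the proof of Lemma~\ref{Lemma: Monotonicity}), and that $\mathbb P(\Gamma_m) \to 1$; the finite-termination conclusion is therefore a statement about realizations in $\Gamma_m$.
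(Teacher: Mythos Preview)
Your proposal is correct and follows essentially the same route as the paper: assume the stopping criterion is never triggered, combine the uniform lower bound on the residuals with the summability estimate from Lemma~\ref{Lemma: Monotonicity}(ii), and obtain a contradiction by letting $n\to\infty$. The only cosmetic difference is that the paper takes expectations (using $\mathbb{E}[z_m^2/m]=\sigma^2/m$) before passing to the limit, whereas you argue pathwise on $\Gamma_m$ using $z_m>0$; both reach the same contradiction.
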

\begin{proof}
Le \( n \) be a non-negative integer such that
\[ \|\F (u_k^{(m)}) - \hat v^{(m)}\| > \frac{\tau_m}{\sqrt{m}} z_m \quad \forall \; k \in \{ 0, 1, \dots, n\}. \] 
By invoking \eqref{Concluding eqn of first Lemma in C}, yields
\begin{align*}
   2(n+1)\mathcal C\frac{\tau_m^2}{m}z_m^2 \leq  2\mathcal C\sum_{k=0}^{k=n}  \|\F(u_k^{(m)}) - \hat v^{(m)}\|^2 &\leq \sum_{k=0}^{k = n}\left[\|g_{k}^{(m)}\|^2 - \|g_{k+1}^{(m)}\|^2\right]\\
     & = \|g_0^{(m)}\|^2 - \|g_{n+1}^{(m)}\|^2 \leq \|g_0^{(m)}\|^2.
\end{align*}
Taking expectations and using
\(\mathbb{E}[\| v - \hat{v}^{(m)}\|^2] = \frac{\sigma^2}{m} = \mathbb{E}\left[ \frac{z_m^2}{m} \right],\) we obtain 
\begin{equation*}
      2(n+1)\mathcal{C} \tau_m^2 \mathbb{E}[\|v - \hat{v}^{(m)}\|^2] \leq \|g_0^{(m)}\|^2 < \infty.
\end{equation*}
If no finite index $k_m$ satisfied the stopping condition \eqref{eq:stopping_rule}, then letting $n \to \infty$ in the above estimate would lead to a contradiction. Hence, \Cref{alg:buildtree} must terminate after finitely many iterations.
\end{proof}
\subsection{Convergence for exact data}\label{Subsec: Convergence for exact data}
We examine Algorithm~\ref{alg:buildtree} without the superscript \((m)\) in the case of exact data \(v\). 
To establish convergence, it suffices to show that the sequence of iterates \(\{u_k\}_{k \geq 0}\) forms a Cauchy sequence. 
We begin with a preliminary result on the behavior of \(\{u_k\}_{k \geq 0}\).
\begin{lemma}\label{lemma: monotonicity for exact data}
    Suppose that Assumption~\ref{assump:Fi} hold, and let $\{u_k\}_{k \geq 0}$ denote the sequence generated by \Cref{alg:buildtree} with exact data $v$. Then $u_k \in \mathcal{B}_{\wp}(u^\dagger)$ for all $k \geq 0$, and 
    \begin{equation}\label{eqn:used_in_last_thm}
        \|u_{k+1} - u^\dagger\|^2 - \|u_k - u^\dagger\|^2 
        \leq -2\mathcal{C}_0 \|\F(u_k) - v\|^2, 
        \quad \forall \; k \geq 0,
    \end{equation}
    where
    \(
    \mathcal{C}_0 := (1 - \eta)\zeta - \nu_0(\wp + \nu_1) - \zeta_0^2 > 0.
    \)
    Consequently, the sequence $\{\|u_k - u^\dagger\|\}_{k \geq 0}$ decreases monotonically, and
    \begin{equation}\label{sum for non noisy case}
        \sum_{k=0}^\infty \|\F(u_k) - v\|^2 
        \leq \frac{1}{2\mathcal{C}_0} \|u_0 - u^\dagger\|^2 < \infty,
    \end{equation}
    where $u_0:=u_0^{(0)}=\Psi(v).$
\end{lemma}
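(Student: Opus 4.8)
The plan is to mirror the argument of Lemma~\ref{Lemma: Monotonicity}, specialized to the noise-free setting in which $\hat v^{(m)}$ is replaced by the exact data $v = \F(u^\dagger)$ and no stopping index intervenes. The decisive simplification is that the term $(1+\eta)\alpha_k\|v-\hat v^{(m)}\|\,\|\F(u_k)-\hat v^{(m)}\|$, which forced the factor $\tfrac{1+\eta}{\mathcal H}$ and the event $\Gamma_m$ in Lemma~\ref{Lemma: Monotonicity}, now vanishes identically because $\|v - v\| = 0$. This is exactly why the controlling constant improves from $\mathcal C$ to $\mathcal C_0 = (1-\eta)\zeta - \nu_0(\wp+\nu_1) - \zeta_0^2$, and why the descent holds for every $k \geq 0$ rather than only up to a stopping index.

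First I would run a coupled induction on $k$, simultaneously establishing $u_k \in \mathcal{B}_\wp(u^\dagger)$ and the one-step descent \eqref{eqn:used_in_last_thm}. The base case $k=0$ follows from (A5), which gives $u^\dagger \in \mathcal{B}_\wp(u_0)$, hence $u_0 \in \mathcal{B}_\wp(u^\dagger)$; since in addition $u^\dagger \in \mathcal{B}_\wp(u_0)$, any iterate with $u_k \in \mathcal{B}_\wp(u^\dagger)$ satisfies $u_k \in \mathcal{B}_{2\wp}(u_0) \subset \mathcal{B}_{3\wp}(u_0)$, so that all parts of Assumption~\ref{assump:Fi} are available at the current iterate. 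Writing $g_k := u_k - u^\dagger$ and $h_k := \alpha_k \F'(u_k)^*(\F(u_k) - v) + \beta_k \Delta_{u_k} u_k$, I would expand
\[
\|g_{k+1}\|^2 - \|g_k\|^2 = \|h_k\|^2 - 2\langle g_k, h_k\rangle,
\]
exactly as in \eqref{split eqn}.

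The two terms are then bounded verbatim as in Lemma~\ref{Lemma: Monotonicity}, dropping the noise contribution. For $\|h_k\|^2$, the defining bounds \eqref{alpha}--\eqref{beta} give $\alpha_k\|\F'(u_k)^*(\F(u_k)-v)\| \leq \zeta_0\|\F(u_k)-v\|$ and $(\beta_k\|\Delta_{u_k}u_k\|)^2 \leq \nu_0\nu_1\|\F(u_k)-v\|^2$, whence $\|h_k\|^2 \leq 2(\zeta_0^2 + \nu_0\nu_1)\|\F(u_k)-v\|^2$. For the cross term I would write $\F'(u_k)(u^\dagger - u_k) = (v - \F(u_k)) - r_k$ with $r_k := v - \F(u_k) - \F'(u_k)(u^\dagger - u_k)$, apply the tangential cone condition (A3) with $u = u^\dagger$, $w = u_k$ to obtain $\|r_k\| \leq \eta\|\F(u_k)-v\|$, and use $\|g_k\| \leq \wp$ together with $\beta_k\|\Delta_{u_k}u_k\| \leq \nu_0\|\F(u_k)-v\|^2$. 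This yields
\[
-\langle g_k, h_k\rangle \leq -(1-\eta)\alpha_k\|\F(u_k)-v\|^2 + \wp\nu_0\|\F(u_k)-v\|^2,
\]
the noise-free analogue of \eqref{eqn: gk and hk}. Substituting both estimates and using $\alpha_k \geq \zeta$ delivers \eqref{eqn:used_in_last_thm} with $\mathcal C_0$ as stated; its positivity follows because the standing parameter condition on $\zeta$ uses a denominator satisfying $0 < 1-\eta-\tfrac{1+\eta}{\mathcal H} < 1-\eta$, so that the stated bound is strictly stronger than $\zeta > \tfrac{\nu_0(\wp+\nu_1)+\zeta_0^2}{1-\eta}$, i.e. $(1-\eta)\zeta > \nu_0(\wp+\nu_1)+\zeta_0^2$. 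Since the right-hand side of \eqref{eqn:used_in_last_thm} is nonpositive, $\|g_{k+1}\| \leq \|g_k\| \leq \wp$, which closes the induction and proves the monotonicity. Telescoping \eqref{eqn:used_in_last_thm} over $k=0,1,\dots$ and discarding the nonnegative limit $\lim_k\|g_k\|^2$ then gives $\sum_{k\geq 0}\|\F(u_k)-v\|^2 \leq \tfrac{1}{2\mathcal C_0}\|u_0-u^\dagger\|^2 < \infty$.

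The only genuine care-point, rather than a real obstacle, is the \emph{coupled} nature of the induction: the descent inequality is what keeps $u_{k+1}$ inside $\mathcal{B}_\wp(u^\dagger)$, yet its derivation presupposes $u_k \in \mathcal{B}_\wp(u^\dagger)$ (both to invoke Assumption~\ref{assump:Fi} on $\mathcal{B}_{3\wp}(u_0)$ and to bound $\|g_k\| \leq \wp$ in the $\beta_k$ term). Threading the two claims through a single induction, exactly as in Lemma~\ref{Lemma: Monotonicity}, resolves this cleanly; no stochastic machinery or $\Gamma_m$-type events are needed here since the data are exact.
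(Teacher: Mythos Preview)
Your proposal is correct and follows exactly the approach the paper indicates: the paper's own proof simply states that ``the assertion can be established through reasoning analogous to that employed in Lemma~\ref{Lemma: Monotonicity},'' and you have spelled out precisely that analogy, including the key simplification that the noise term $(1+\eta)\alpha_k\|v-\hat v^{(m)}\|\,\|\F(u_k)-\hat v^{(m)}\|$ vanishes, which is what replaces $\mathcal C$ by $\mathcal C_0$. Your added justification that $\mathcal C_0 > 0$ follows from the standing condition on $\zeta$ (via $0 < 1-\eta-\tfrac{1+\eta}{\mathcal H} < 1-\eta$) is a helpful detail the paper leaves implicit.
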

\begin{proof}
 The assertion can be established through reasoning analogous to that employed in Lemma~\ref{Lemma: Monotonicity}.
\end{proof}
From \eqref{sum for non noisy case}, we deduce that for exact data the residual vanishes, i.e., $\|\F(u_k)-v\|\to 0$ as $k\to\infty$. This allows us to state the convergence result in the exact data setting.
\begin{theorem}\label{Convergence for exact data}
Assuming the conditions of Lemma~\ref{lemma: monotonicity for exact data} are satisfied, the sequence of iterates $\{u_k\}_{k \geq 0}$ produced by the exact data version of \Cref{alg:buildtree} converges to a solution, denoted $u^\dagger$, of the equation (\ref{Model eqn}).
\end{theorem}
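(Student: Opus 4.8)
The plan is to show that $\{u_k\}_{k\ge 0}$ is a Cauchy sequence; since we work in the finite-dimensional space $\mathcal U \simeq \mathbb{R}^n$, this yields a limit $\bar u$, and the vanishing of the residual together with the continuity of $\mathcal F$ from (A1) will identify $\bar u$ as a solution of \eqref{Model eqn}. Two facts from Lemma~\ref{lemma: monotonicity for exact data} drive the whole argument: first, by \eqref{eqn:used_in_last_thm} the sequence $\{\|u_k - u^\dagger\|\}_{k\ge 0}$ is monotonically decreasing and bounded below, hence converges to some $\epsilon \ge 0$, so the real sequence $\|u_k - u^\dagger\|^2$ is Cauchy; second, \eqref{sum for non noisy case} gives $\sum_k \|\mathcal F(u_k) - v\|^2 < \infty$, so the residual tails $\sum_{i \ge j}\|\mathcal F(u_i) - v\|^2$ vanish as $j \to \infty$, and in particular $\|\mathcal F(u_k) - v\| \to 0$.

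To establish the Cauchy property I would follow the classical device of Hanke--Neubauer--Scherzer. Given indices $j \le k$, choose $l \in \{j,\dots,k\}$ for which $\|\mathcal F(u_l) - v\|$ is minimal on that range, and split $\|u_k - u_j\| \le \|u_k - u_l\| + \|u_l - u_j\|$. For each term I would apply the polarization identity paired against the common vector $u_l - u^\dagger$, e.g.
\begin{equation*}
\|u_k - u_l\|^2 = \|u_k - u^\dagger\|^2 - \|u_l - u^\dagger\|^2 - 2\langle u_k - u_l,\, u_l - u^\dagger\rangle,
\end{equation*}
and the analogous identity for $\|u_l - u_j\|^2$. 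Because $\|u_k - u^\dagger\|^2 \to \epsilon^2$ monotonically, all squared-norm differences tend to $0$ as $j \to \infty$, so the problem reduces to controlling the two inner products.

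Writing $u_k - u_l = -\sum_{i=l}^{k-1} h_i$ with $h_i = \alpha_i \mathcal F'(u_i)^*(\mathcal F(u_i) - v) + \beta_i \Delta_{u_i}u_i$, the key is a summable bound on $|\langle h_i, u_l - u^\dagger\rangle|$. For the data-fidelity part I would decompose $\mathcal F'(u_i)(u_l - u^\dagger) = \mathcal F'(u_i)(u_l - u_i) + \mathcal F'(u_i)(u_i - u^\dagger)$ and apply the tangential cone condition (A3) to the pairs $(u_l,u_i)$ and $(u^\dagger,u_i)$ (using $\mathcal F(u^\dagger)=v$), which gives
\begin{equation*}
\|\mathcal F'(u_i)(u_l - u^\dagger)\| \le (1+\eta)\|\mathcal F(u_l) - v\| + 2\eta\|\mathcal F(u_i) - v\| \le (1+3\eta)\|\mathcal F(u_i) - v\|,
\end{equation*}
where the last step uses the minimality of the residual at $l$ together with $l \le i$. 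Combined with $\alpha_i \le \zeta_1$ from \eqref{alpha}, the fidelity contribution is bounded by $(1+3\eta)\zeta_1\|\mathcal F(u_i)-v\|^2$. For the graph term I would use that \eqref{beta} forces $\beta_i\|\Delta_{u_i}u_i\| \le \nu_0\|\mathcal F(u_i)-v\|^2$ and that $u_l \in \mathcal B_\wp(u^\dagger)$ by Lemma~\ref{lemma: monotonicity for exact data}, so this contribution is at most $\nu_0\wp\|\mathcal F(u_i)-v\|^2$. Summing yields $|\langle u_k - u_l, u_l - u^\dagger\rangle| \le C\sum_{i\ge j}\|\mathcal F(u_i)-v\|^2 \to 0$; the $(l,j)$ inner product is handled identically over the index range $\{j,\dots,l-1\}$, where minimality of $l$ again applies. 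Hence $\|u_k - u_j\| \to 0$, the sequence converges to some $\bar u$, and passing to the limit in $\|\mathcal F(u_k) - v\| \to 0$ via continuity of $\mathcal F$ gives $\mathcal F(\bar u) = v$, so $\bar u$ is the sought solution.

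The main obstacle I anticipate is the cross-term estimate, and specifically forcing both differences to be controlled by the \emph{same} residual $\|\mathcal F(u_i)-v\|$: this is precisely why $l$ must be the minimal-residual index on $\{j,\dots,k\}$ and why both polarization identities are paired against $u_l - u^\dagger$ rather than against $u_k - u^\dagger$ or $u_j - u^\dagger$. A secondary point needing care is that the graph-Laplacian term must not destroy summability; this is guaranteed by the first branch of the definition \eqref{beta}, which ties $\beta_i\|\Delta_{u_i}u_i\|$ to $\|\mathcal F(u_i)-v\|^2$, so the regularizing term inherits the summability of the residuals established in \eqref{sum for non noisy case}.
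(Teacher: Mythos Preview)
Your proposal is correct and follows essentially the same approach as the paper's proof: both invoke the Hanke--Neubauer--Scherzer device of choosing the minimal-residual index on $\{j,\dots,k\}$, pairing the polarization identities against $u_l - u^\dagger$, and controlling the resulting inner products via the tangential cone condition together with the bound $\beta_i\|\Delta_{u_i}u_i\| \le \nu_0\|\mathcal F(u_i)-v\|^2$ from \eqref{beta}. The only cosmetic difference is that your tangential-cone estimate yields the constant $(1+3\eta)\zeta_1$ whereas the paper obtains $3(1+\eta)\zeta_1$; both suffice.
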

\begin{proof}
For the exact data setting, define \( g_k := u_k - u^\dagger \).  
Given indices \( \ell \geq k \), select an integer \( m \) with \( \ell \geq m \geq k \) in a way that  
\begin{equation}\label{rn rp ineq}
    \|\F(u_m) - v\| \leq \|\F(u_n) - v\|, \quad \forall \; k \leq n \leq \ell .
\end{equation}
By the triangle inequality, we obtain  
\begin{equation}\label{xk triangular}
    \|g_\ell - g_k\| \leq \|g_\ell - g_m\| + \|g_m - g_k\|,
\end{equation}    
where the two terms admit the following expansions:
\begin{align}
    \|g_\ell - g_m\|^2 &= 2\langle g_m - g_\ell, g_m \rangle + \|g_\ell\|^2 - \|g_m\|^2,  \label{gm gl} \\
    \|g_m - g_k\|^2 &= 2\langle g_k - g_m, g_m \rangle + \|g_m\|^2 - \|g_k\|^2. \label{gm gk}
\end{align}
From Lemma~\ref{lemma: monotonicity for exact data}, the sequence \(\{\|g_k\|\}_{k \geq 0}\) is monotonically decreasing and bounded below by \(0\). Hence, the limit exists, say  
\(
\lim_{k \to \infty} \|g_k\| = \Theta \geq 0.
\)  
Consequently, we deduce
\begin{align}
     \lim_{k \to \infty }\|g_\ell - g_m\|^2 &= 2 \lim_{k \to \infty }\langle g_m - g_\ell, g_m \rangle + \Theta^2 - \Theta^2, \label{xl xm limit}\\
     \lim_{k \to \infty }\|g_m - g_k\|^2 &= 2 \lim_{k \to \infty }\langle g_k - g_m, g_m \rangle + \Theta^2 - \Theta^2. \label{xm xk limit}
\end{align}
We want to show that $\{g_k\}_{k \geq 0}$ is a Cauchy sequence. Therefore, we assert that as $k \to \infty$, $\langle g_m - g_\ell, g_m \rangle \to 0$ and $\langle g_k - g_m, g_m \rangle \to 0$. In order to substantiate the assertion, it is evident that  
\begin{align}
 \nonumber   |\langle g_m &- g_\ell, g_m\rangle| = |\langle u_m - u_\ell, g_m\rangle| = \left| \sum_{j=m}^{\ell -1} \langle u_{j+1} - u_j, g_m \rangle \right| \\ \nonumber
    & \leq \left| \left\langle \sum_{j=m}^{\ell-1} \left(\alpha_j \F'(u_j)^*(\F(u_j) - v) + \beta_j \Delta_{u_j}u_j \right), u_m - u^\dagger \right\rangle \right| \\ \nonumber
     & \leq \sum_{j=m}^{\ell-1}\left| \left\langle  \alpha_j \F'(u_j)^*(\F(u_j) - v), u_m -u^\dagger \right\rangle \right| + \sum_{j=m}^{\ell -1}\left| \left\langle \beta_j \Delta_{u_j}u_j, u_m - u^\dagger \right\rangle \right| \\ \nonumber
     & \leq \zeta_1 \sum_{j=m}^{\ell-1}\left| \left\langle   \F(u_j)- v, \F'(u_j)(u_m - u^\dagger) \right\rangle \right| + \sum_{j=m}^{\ell-1}\beta_j \left| \left\langle  \Delta_{u_j}u_j, u_m - u^\dagger \right\rangle \right| \\ \nonumber
     & \leq \zeta_1 \sum_{j=m}^{\ell-1}\left \|  \F(u_j)- v \right\| \left\| \F'(u_j)(u_m - u^\dagger ) \right\| + \sum_{j=m}^{\ell-1}\beta_j \left\| \Delta_{u_j}u_j\right\|  \left \|u_m - u^\dagger \right\|.
\end{align}
Using (A3) of Assumption~\ref{assump:Fi}, it can be easily obtain that 
\begin{align}\label{eqn:just_above}
 \|\F'(u_j)(u_m - u^\dagger) \|   &\leq  \|\F'(u_j)(u_j - u^\dagger) \| +  \|\F'(u_j)(u_m - u_j) \| \\ \nonumber
 & \leq (1+\eta) \left( \|\F(u_j) - v\| + \|\F(u_m) - \F(u_j)\| \right) \\ \nonumber
 & \leq (1+\eta) \left( 2\|\F(u_j) - v\| + \|\F(u_m) - v\| \right).
\end{align}
Employing \eqref{rn rp ineq}, \eqref{eqn:just_above}, and the inequality $\beta_j \left\|\Delta_{u_j}u_j\right\| \leq \nu_0 \|\F(u_j) - v\|^2$, from \eqref{beta}, and observing that $u_k \in \mathcal{B}_\wp(u^\dagger)$ for all $k \geq 0$, the preceding inequality can be equivalently transformed as
\begin{align}\label{x_n g_l in terms of residual}
   \nonumber   |\langle g_m - g_\ell, g_m\rangle| & \leq 3\zeta_1(1+\eta) \sum_{j=m}^{\ell-1}\|\F(u_j) - v\|^2 + \wp \nu_0 \sum_{j=m}^{\ell-1}\|\F(u_j) - v\|^2 \\ 
   &  \leq \Big( 3\zeta_1 (1 + \eta) +  \wp\nu_0 \Big)\sum_{j=m}^{\ell-1}\|\F(u_j) - v\|^2.
\end{align}
Applying the same reasoning to $|\langle g_k - g_m, g_m \rangle|$, we can obtain
\begin{equation}\label{gk gm to 0}
 |\langle g_k - g_m, g_m \rangle| \leq    \Big( 3\zeta_1(1+\eta) + \wp \nu_0\Big)\sum_{j=k}^{m-1}\|\F(u_j) - v\|^2.
\end{equation}
Now, using \eqref{x_n g_l in terms of residual} and \eqref{gk gm to 0} we deduce that 
\( |\langle g_m - g_\ell, g_m \rangle| \) and 
\( |\langle g_k - g_m, g_m \rangle| \) converge to zero as 
\( k \to \infty \), by virtue of \eqref{sum for non noisy case}. 
So, from \eqref{xl xm limit} and \eqref{xm xk limit}, we have
\begin{align*}
     \lim_{k \to \infty }\|g_l - g_m\|^2 &= 2 \lim_{k \to \infty }\langle g_m - g_l, g_m \rangle =0, \\
     \lim_{k \to \infty }\|g_m - g_k\|^2 &= 2 \lim_{k \to \infty }\langle g_k - g_m, g_m \rangle =0.
\end{align*}
It is straightforward from \eqref{xk triangular}, \eqref{gm gl}, and \eqref{gm gk} that the sequence $\{g_k\}_{k \geq 0}$ is Cauchy. Therefore, $\{u_k\}_{k \geq 0}$ likewise forms a Cauchy sequence and converges to a limit $u^*$. Given that \(F(u_k) - v\) vanishes as \(k \to \infty \), the limit satisfies \eqref{Model eqn}, suggesting \(u^* = u^\dagger\). Thus, the iterates generated by exact data variant of \texttt{E-IRMGL+\(\Psi\)}~\eqref{main iterative scheme} converge to the true solution of \eqref{Model eqn}.
\end{proof}
\subsection{Stability analysis}\label{subsec: stability}
This subsection, investigates the stability of the \texttt{E-IRMGL+$\Psi$} approach in the presence of multiple repeated measurement data. 
We require the following regularity condition on the family of reconstructors $\Psi_\theta$.  
\begin{assumption}\label{hypothesis stability}
Let 
\(
\hat v^{(m)} := \frac{1}{m} \sum_{i=1}^m v_i
\) 
denote the empirical mean, and let \(\theta = \theta(m, \hat v^{(m)})\) be fixed. We assume that the reconstruction operator \(\Psi_\theta : \mathcal{V} \to \mathcal{U}\) is \emph{mean-square continuous}, i.e., for sequences \(m_r \to m\) and \(\hat v^{(m_r)} \to \hat v^{(m)}\) as \(r \to \infty\), and \(\theta_r := \theta(m_r, \hat v^{(m_r)})\), it holds that
\[
\mathbb{E}[\theta_r] \to \mathbb{E}[\theta] \quad \text{and} \quad 
\mathbb{E}\bigl[\|\Psi_{\theta_r}(\hat v^{(m_r)}) - \Psi_\theta(\hat v^{(m)})\|^2\bigr] \to 0 \quad \text{as } r \to \infty.
\]
\end{assumption}
It is crucial to emphasize that the aforementioned assumption is the stochastic variant of \cite[Hypothesis 3.2]{bianchi2025data}, and we are working on $\hat v^{(m)}$ instead of noisy data $v^\delta$. It should be noted that a wide class of reconstructors can satisfy this weak assumption. 
We verify Assumption~\ref{hypothesis stability} on a standard reconstructor given below and clarify the role and admissible dependence of the parameter $\theta=\theta(m,\hat v^{(m)})$.
\subsubsection*{Tikhonov reconstructors}
Define the Tikhonov functional
\[
\mathcal J_\theta(u;v):=\tfrac12 \left(\|\F(u)-v\|^2 + \theta\|u\|^2\right),
\qquad v\in\mathcal V,
\]
where $\theta \in (0, \infty).$
Suppose there exist a radius $\wp>0$ and a constant $c_\theta>0$ such that for every $u \in \mathcal B_\wp(u^\dagger)$  the Hessian of $\mathcal J_\theta$ at $u$ satisfies the uniform coercivity bound
\[
\bigl\langle \bigl(\F'(u)^*\F'(u)+\theta I\bigr) w, w\bigr\rangle \;\ge\; c_\theta \|w\|^2
\qquad\text{for all } w\in\mathcal U.
\]
Then for every $v$ sufficiently close to $\F(u^\dagger)$ the minimizer
\[
\Psi_\theta(v):=\arg\min_{u\in \mathcal B_\wp(u^\dagger)}\mathcal J_\theta(u;v)
\]
exists and is unique. Let $\theta = \theta(m, \hat v^{(m)})$ denotes the regularization parameter 
determined via the statistical version of the discrepancy principle 
\cite[equation (4.57)]{hanke1996general}, then the parameter sequence is 
stable in expectation, in the sense that
\[
\mathbb{E}[\theta_r] \;\to\; \mathbb{E}[\theta]
\qquad \text{as } r \to \infty.
\]
  Moreover the map $(v, \theta)\mapsto\Psi_\theta(v)$ is joint locally Lipschitz, i.e., there exists a constant $L_v, L_\theta>0$ (depending on $\theta$ and the local behavior of $\F$) such that
\[
\|\Psi_{\theta_r}(\hat v^{(m_r)})-\Psi_{\theta}(\hat v^{(m)})\|\le L_v\|\hat v^{(m_r)}- \hat v^{(m)}\| + L_\theta |\theta_r - \theta|
\]
for all $\hat v^{(m_r)}, \hat v^{(m)}$ in a small neighborhood of $\F(u^\dagger)$.
Consequently, we have the desired result. In the linear setting, the verification of Assumption~\ref{hypothesis stability} becomes significantly simpler. One may follow the argument outlined in \cite[Example A.6]{bianchi2025data}, which provides a direct and concise proof tailored to the linear case.

Before proceeding to the main stability result, we note that the 
regularization term of \texttt{E-IRMGL+$\Psi$} method
depends explicitly on the empirical mean \(\hat v^{(m)}\). 
This dependence prevents a direct application of standard analytical 
techniques and making it highly nonstandard. To address this difficulty, we rely on a lemma, originally 
established in~\cite{bajpai2025convergence,bianchi2025data}, which is 
central to our stability analysis.
\begin{lemma}\label{lemma: contraction of Delta}
  For the sequence of iterates \(\{u_k^{(m)}\}_{k \geq 0}\)  generated by \Cref{alg:buildtree} and the sequence of iterates $\{u_k\}_{k \geq 0}$ generated by its exact data counterpart, the following holds
  \begin{equation*}
      \|\Delta_{u_k^{(m)}}u_k - \Delta_{u_k}u_k\| \leq \mathcal{H}_0 \|u_k\|\|u_k^{(m)} - u_k\|,
  \end{equation*}
 where 
 \[\mathcal{H}_0 =2 L_h \left( \max_{a,b}{g(a, b)}\sqrt D_N  +  \max_{a}\max_{b}{g(a, b)}\right),\] 
 with \(g(a,b)\) as in \eqref{eqn: edge weight fun}, $L_h$ is the Lipchitz constant of the function $h_u$ and $D_N$ denotes the maximum degree of any node in $\mathcal S$.
\end{lemma}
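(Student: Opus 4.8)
The plan is to work pointwise and exploit that the two graph Laplacians $\Delta_{u_k^{(m)}}$ and $\Delta_{u_k}$ differ only through the intensity-similarity factor $h$ in the edge-weight function \eqref{eqn: edge weight fun}, since the geometric factor $g(a,b)$ is independent of the signal. Writing the action of the Laplacian as in \eqref{Eqn: Graph Laplacian}, for each node $a \in \mathcal S$ one obtains
\[
\bigl(\Delta_{u_k^{(m)}}u_k - \Delta_{u_k}u_k\bigr)(a) = \sum_{b \,:\, a \sim b} g(a,b)\,\bigl[h_{u_k^{(m)}}(a,b) - h_{u_k}(a,b)\bigr]\bigl(u_k(a) - u_k(b)\bigr),
\]
so that the entire difference is localized on the edges and is carried by the increments of $h$.

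First I would invoke the Lipschitz continuity of $h$. Since $h_u(a,b) = \exp\!\bigl(-|u(a)-u(b)|^2/\lambda\bigr)$ and the one-dimensional map $x \mapsto x\,e^{-x^2}$ is bounded, $h$ is Lipschitz in the endpoint values with constant $L_h$, giving
\[
\bigl|h_{u_k^{(m)}}(a,b) - h_{u_k}(a,b)\bigr| \le L_h\bigl(|u_k^{(m)}(a) - u_k(a)| + |u_k^{(m)}(b) - u_k(b)|\bigr).
\]
Substituting this together with $|u_k(a) - u_k(b)| \le |u_k(a)| + |u_k(b)|$ into the pointwise identity, and abbreviating $\delta_k := u_k^{(m)} - u_k$, splits $\bigl|(\Delta_{u_k^{(m)}}u_k - \Delta_{u_k}u_k)(a)\bigr|$ into a \emph{diagonal} contribution proportional to $|\delta_k(a)|$ and an \emph{off-diagonal} contribution involving $|\delta_k(b)|$ summed over the neighbors $b \sim a$.

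The main work is then the passage from these pointwise bounds to the Euclidean norm. Here I would use the uniform degree bound $\deg(a) \le D_N$ together with the Cauchy--Schwarz inequality applied to each neighbor sum. After pulling $|\delta_k(a)|$ out of the sum over $b$, the diagonal part is controlled by $\max_{a}\max_{b} g(a,b)$ and the local mass of $u_k$, whereas the off-diagonal part, in which $\delta_k(b)$ and $u_k(b)$ both range over the (at most $D_N$) neighbors, requires two applications of Cauchy--Schwarz and thereby produces the $\sqrt{D_N}$ factors against $\max_{a,b} g(a,b)$. Summing the squared pointwise bounds over $a \in \mathcal S$, taking square roots, and regrouping the neighbor sums into the full norms collects everything into the product $\|u_k\|\,\|u_k^{(m)} - u_k\|$, with the constants reorganizing exactly into
\[
\mathcal{H}_0 = 2 L_h\Bigl(\max_{a,b} g(a,b)\,\sqrt{D_N} + \max_{a}\max_{b} g(a,b)\Bigr).
\]

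The delicate point I expect to be the main obstacle is the bookkeeping in this last step: one must carefully track which neighbor sums are estimated via Cauchy--Schwarz (yielding $\sqrt{D_N}$) versus bounded directly (yielding a bare $\max_b g$ factor), and must ensure that the sums over $b$ of $|u_k(b)|^2$ and $|\delta_k(b)|^2$ regroup into the full $\ell^2$ norms $\|u_k\|$ and $\|\delta_k\|$ rather than into a strictly larger quantity. Because the graph-Laplacian construction depends on the signal only through $h$ and is insensitive to whether the forward operator is linear or nonlinear, the identical estimate already obtained in \cite{bajpai2025convergence,bianchi2025data} transfers verbatim; I would therefore either reproduce this computation or cite those references directly.
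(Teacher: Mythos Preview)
The paper does not prove this lemma at all: it merely records the statement and attributes it to \cite{bajpai2025convergence,bianchi2025data}, so your final remark about citing those references directly is in fact exactly what the paper does. Your proof sketch is a correct outline of the argument from those sources and goes beyond what the paper itself provides; either option you propose (reproduce or cite) is fine, with citation being the choice made here.
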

We now show that Lemma~\ref{lemma: contraction of Delta}, together with Assumption~\ref{hypothesis stability}, ensures the stability linking Algorithm~\ref{alg:buildtree} with its exact counterpart. Since the iterates $u_k^{(m)}$ of Algorithm~\ref{alg:buildtree} are random variables, the analysis is carried out in a stochastic framework.

 \begin{lemma}\label{lemma: stability}
Let Assumptions~\ref{assump:Fi}, \ref{assump:iid}, and \ref{hypothesis stability} be satisfied. Then, for every fixed integer $k \geq 0$, the iterates $u_k^{(m)}$ produced by Algorithm~\ref{alg:buildtree} converge in mean square to the corresponding exact-data iterates $u_k$, i.e.,
\[
    \lim_{m \to \infty} \mathbb{E}\!\left[\|u_k^{(m)} - u_k\|^2\right] = 0 .
\]
 \end{lemma}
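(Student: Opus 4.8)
The plan is to prove the claim by induction on the fixed iteration index $k$, comparing the stochastic iterates $u_k^{(m)}$ with their exact-data counterparts $u_k$ in mean square. The base case $k=0$ reduces precisely to the mean-square continuity of the reconstructor. Indeed, $u_0^{(m)} = \Psi_\theta(\hat v^{(m)})$ and $u_0 = \Psi(v)$, and since $\hat v^{(m)} \to v$ in mean square as $m \to \infty$ (because $\mathbb{E}[\|\hat v^{(m)} - v\|^2] = \sigma^2/m \to 0$ by Assumption~\ref{assump:iid}), Assumption~\ref{hypothesis stability} gives directly that $\mathbb{E}[\|u_0^{(m)} - u_0\|^2] \to 0$. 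The parameter-stability part $\mathbb{E}[\theta_r] \to \mathbb{E}[\theta]$ in that assumption ensures the dependence $\theta = \theta(m,\hat v^{(m)})$ does not destroy this continuity.

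For the inductive step, assume $\mathbb{E}[\|u_k^{(m)} - u_k\|^2] \to 0$ and subtract the two update rules. Writing the difference $u_{k+1}^{(m)} - u_{k+1}$, I would group the terms into three blocks: the data-fidelity gradient terms, the graph-Laplacian regularization terms, and the discrepancies coming from the step-size and weight parameters $\alpha_k^{(m)},\beta_k^{(m)}$ versus $\alpha_k,\beta_k$. For the fidelity block, I would use (A1)--(A2) and (A4) of Assumption~\ref{assump:Fi}: the Fréchet derivative $\F'$ is bounded by $B$ and Lipschitz with constant $L$, and the forward map is Lipschitz on $\mathcal{B}_{3\wp}(u_0^{(m)})$, so the map $u \mapsto \F'(u)^*(\F(u) - w)$ is locally Lipschitz in both $u$ and the data argument $w = \hat v^{(m)}$. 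Together with $\hat v^{(m)} \to v$ in mean square and the inductive hypothesis, the fidelity difference is controlled in $L^2(\Omega)$. For the graph-Laplacian block, the crucial tool is Lemma~\ref{lemma: contraction of Delta}, which bounds $\|\Delta_{u_k^{(m)}} u_k - \Delta_{u_k} u_k\|$ by $\mathcal{H}_0 \|u_k\| \|u_k^{(m)} - u_k\|$; combining this with the boundedness of the iterates (they lie in $\mathcal{B}_\wp(u^\dagger)$ by Lemma~\ref{Lemma: Monotonicity} and Lemma~\ref{lemma: monotonicity for exact data}), one obtains mean-square control of the Laplacian term as well.

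The step-size and weight parameters require separate attention, and I expect this to be the main obstacle. The quantities $\alpha_k^{(m)}$ and $\beta_k^{(m)}$ are defined in \eqref{alpha}--\eqref{beta} through $\min$ operations on ratios whose numerators and denominators both depend on $u_k^{(m)}$ and $\hat v^{(m)}$, and they are nonsmooth because of the minima. The plan is to argue that, on the event where the denominators are bounded away from zero, these maps are locally Lipschitz functions of $(u_k^{(m)}, \hat v^{(m)})$, so that $\mathbb{E}[|\alpha_k^{(m)} - \alpha_k|^2] \to 0$ and $\mathbb{E}[|\beta_k^{(m)} - \beta_k|^2] \to 0$ follow from the inductive hypothesis and $\hat v^{(m)} \to v$. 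The degenerate case $\|\Delta_{u_k} u_k\| = 0$, where $\beta_k$ switches to the alternate branch, must be handled carefully; continuity of the $\min$ and of the norm ensures the two branches agree in the limit, so no jump discontinuity arises generically. Since all the iterates are uniformly bounded, the products of convergent factors remain convergent in $L^2$, and collecting the three blocks via the triangle inequality in $L^2(\Omega)$ and applying Cauchy--Schwarz where products of random variables appear yields $\mathbb{E}[\|u_{k+1}^{(m)} - u_{k+1}\|^2] \to 0$, completing the induction.
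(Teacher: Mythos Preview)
Your proposal is correct and follows essentially the same inductive strategy as the paper: base case via Assumption~\ref{hypothesis stability}, inductive step via a three-block decomposition (fidelity gradient, graph Laplacian via Lemma~\ref{lemma: contraction of Delta}, step-size/weight parameters), using the Lipschitz and boundedness properties (A2)/(A4) throughout. The only notable difference is in the step-size argument: rather than a direct local-Lipschitz estimate for $\alpha_k^{(m)}$ and $\beta_k^{(m)}$, the paper first proves convergence in probability via the continuous mapping theorem (continuity of $(u,v)\mapsto\min\{\zeta_0\|\F(u)-v\|/\|\F'(u)^*(\F(u)-v)\|,\zeta_1\}$ near the limit point) and then upgrades to $L^2$ convergence by boundedness and a subsequence/dominated-convergence argument, which cleanly avoids having to control the denominators uniformly on a full-probability event.
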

 \begin{proof}
We proceed by induction. For $k=0$, since $u_0^{(m)} = \Psi_\theta(\hat v^{(m)})$ and $u_0 = \Psi(v)$, 
the result follows immediately from Assumption~\ref{hypothesis stability}, which ensures 
\[
\mathbb{E}\!\left[\|u_0^{(m)} - u_0\|^2\right] \to 0 \quad \text{as } m \to \infty.
\]
Let us now assume that the assertion is true for all $0 \leq j \leq k$, that is, as $m$ approaches $\infty$, \( \mathbb{E}[|u_j^{(m)} - u_j\|^2] \to 0\).
Our goal is to demonstrate that it also applies to $j=k+1$. To do so, observe from \eqref{main iterative scheme} that 
\begin{equation}\label{g_k+1 to g_k_exp}
\begin{aligned}
u_{k+1}^{(m)} &- u_{k+1} 
    = \left(u_k^{(m)} - u_k \right)  
       - \Big( \beta_k^{(m)} \Delta_{u_k^{(m)}} u_k^{(m)} - \beta_k \Delta_{u_k} u_k \Big) \\
    &\qquad - \Big( \alpha_k^{(m)} \F'(u_k^{(m)})^*(\F(u_k^{(m)}) - \hat v^{(m)}) 
          - \alpha_k \F'(u_k)^*(\F(u_k) - v) \Big).
\end{aligned}
\end{equation}
Taking the squared norm and subsequently the expectation yields
\begin{equation}\label{eq:exp_bound}
\begin{aligned}
&\mathbb{E}\!\left[\|u_{k+1}^{(m)} - u_{k+1}\|^2\right]
\leq 3 \Big( 
      \mathbb{E}\!\left[\|u_k^{(m)} - u_k\|^2\right] \\
&\quad + \mathbb{E}\!\left[\|\alpha_k^{(m)} \F'(u_k^{(m)})^*(\F(u_k^{(m)}) - \hat v^{(m)}) 
        - \alpha_k \F'(u_k)^*(\F(u_k) - v)\|^2\right] \\
&\quad + \mathbb{E}\!\left[\|\beta_k^{(m)} \Delta_{u_k^{(m)}} u_k^{(m)} 
        - \beta_k \Delta_{u_k} u_k\|^2\right] \Big) \coloneq  3(\text{I} + \text{II} + \text{III}),
\end{aligned}
\end{equation}
where we have employed the inequality $(a_1 + a_2 + a_3)^2 \leq 3 (a_1^2 + a_2^2 + a_3^2)$ for $a_1, a_2, a_3 \in \mathbb{R}$. Note that  I $\to 0$ by induction argument. We now turn to the estimate of II. For this, let
\[
\mathcal A_m := \F'(u_k^{(m)})^*\big(\F(u_k^{(m)})- \hat v^{(m)}\big),
\qquad 
\mathcal A := \F'(u_k)^*\big(\F(u_k)-v\big),
\]
and denote $\alpha_m := \alpha_k^{(m)}$, $\alpha := \alpha_k$. Then
\[
\alpha_m \mathcal  A_m - \alpha \mathcal  A = \alpha_m(\mathcal A_m- \mathcal A) + (\alpha_m-\alpha) \mathcal A.
\]
By the standard inequality $\|x+y\|^2 \leq 2\|x\|^2+2\|y\|^2$, we obtain
\begin{equation}\label{eq:decomp}
\mathbb{E}\left[\|\alpha_m \mathcal A_m - \alpha \mathcal  A\|^2\right]
\leq 2\,\mathbb{E}\!\left[\alpha_m^2 \| \mathcal A_m- \mathcal A\|^2\right]
   + 2\,\mathbb{E}\!\left[(\alpha_m-\alpha)^2 \|\mathcal A\|^2\right].
\end{equation} 
We now expand \(\mathcal{A}_m - \mathcal{A}\) as
\begin{align*}
 \mathcal{A}_m-\mathcal{A} & = \big(\F'(u_k^{(m)})^*-\F'(u_k)^*\big)\big(\F(u_k) - v\big) \\
& \quad + \left[ \F'(u_k^{(m)})^*(\F(u_k^{(m)}) - \hat v^{(m)} ) - \F'(u_k^{(m)})^*(\F(u_k)-v)\right].   
\end{align*}
By combining the operator norm inequality with the triangle inequality, and invoking both the uniform boundedness and the Lipschitz continuity of $\F'$ (cf. (A2) and (A4) of Assumption~\ref{assump:Fi}), we obtain
\begin{align*}
\| \mathcal A_m- \mathcal A\| \leq  L \|u_k^{(m)}- u_k \| \,\|\F(u_k)-v\| + B\|\F(u_k^{(m)})-\F(u_k) - \hat v^{(m)} + v\|.
\end{align*}
Furthermore, (A2) and (A3) of Assumption~\ref{assump:Fi} ensures that
\[\|\F(u_k) - v\| \leq \frac{1}{1 - \eta} \|\F'(u^\dagger)(u_k - u^\dagger)\| \leq \frac{B}{1 - \eta} \|u_k - u^\dagger \|.\]
Since Theorem~\cref{Convergence for exact data} guarantees that the sequence ${u_k}$ remains bounded, it follows that $\|\F(u_k) - v\|$ is also bounded. Taking squared norms and expectations then yields
\begin{align*}
\mathbb{E} \left[ \|\mathcal A_m- \mathcal A\|^2 \right] 
&\leq 2  \,\Big(L^2 \|\F(u_k)-v\|^2 \mathbb{E} \left[\|u_k^{(m)}-u_k\|^2 \right]  \\
&\qquad + B^2 \mathbb{E} \left [ \|\F(u_k^{(m)})-\F(u_k) - \hat v^{(m)} + v\|^2  \right]\Big)\\
&\leq 2  \,\Big(L^2 \|\F(u_k)-v\|^2 \mathbb{E} \left[\|u_k^{(m)}-u_k\|^2 \right]  \\
&\qquad + 2B^2 \left(\mathbb{E} \left [ \|\F(u_k^{(m)}) - \F(u_k) \|^2 \right] + \mathbb{E} \left [ \| \hat v^{(m)} - v\|^2  \right] \right)\Big) \\
&\leq 2  \,\Big(L^2 \|\F(u_k)-v\|^2 + \frac{2B^4}{(1-\eta)^2} \Big) \mathbb{E} \left[\|u_k^{(m)}-u_k\|^2 \right]+ 4B^2 \frac{\sigma^2}{m}.
\end{align*}
By assumption, we have 
\(
\mathbb{E} \left [ \|u_k^{(m)}-u_k \|^2 \right] \to 0 \) and
\( \frac{\sigma^2}{m} \to 0\) as \(m \to \infty. 
\)
Moreover, since $\alpha_m \leq \zeta_1$ by definition, it follows that 
\begin{equation}\label{eqn:A_m to A}
    \mathbb{E}\!\left[\alpha_m^2 \|\mathcal A_m- \mathcal A\|^2\right] 
    \leq \zeta_1^2 \,\mathbb{E}\!\left[\|\mathcal A_m- \mathcal A\|^2 \right] \to 0.
\end{equation}
Next, we establish that $\alpha_m \to \alpha$ in $\mathcal L^2(\Omega, \mathcal{U})$, which directly implies
\[
\mathbb{E}\!\left[(\alpha_m-\alpha)^2\|\mathcal A\|^2\right]
= \|\mathcal A\|^2 \,\mathbb{E}\!\left[(\alpha_m-\alpha)^2\right] \to 0.
\]
Since $u_k^{(m)}\to u_k$ and $\hat v^{(m)}\to v$ as $m \to \infty$ in mean-square, Chebyshev's inequality gives, for any $\varepsilon>0$,
\[
\mathbb{P}\big(\|u_k^{(m)}-u_k\|>\varepsilon\big)
\le \frac{\mathbb{E}[\|u_k^{(m)}-u_k\|^2]}{\varepsilon^2}\to 0,
\]
and similarly for $\hat v^{(m)}$. Hence $(u_k^{(m)}, \hat v^{(m)})\to (u_k,v)$ in probability. Now, 
define
\[
\phi(u,v) := \frac{\zeta_0\|\F(u)-v\|}{\|\F'(u)^*(\F(u)-v)\|},
\]
with the understanding that $\phi$ is considered only in a neighborhood of $(u_k,v)$ where the denominator is nonzero. By (A4) of Assumption~\ref{assump:Fi}, the map $(u,v)\mapsto \phi(u,v)$ is continuous at $(u_k,v)$ because $\|\F'(u_k)^*(\F(u_k)-v)\|=\|\mathcal A\|\neq 0$ implies the denominator stays bounded away from zero in a small neighborhood. So, the function
\[
\psi(u,v):=\min\{\phi(u,v),\zeta_1\}
\]
is therefore also continuous at $(u_k,v)$. Since $\alpha_m=\psi(u_k^{(m)}, \hat v^{(m)})$ and $\alpha=\psi(u_k,v)$, the continuous mapping theorem yields
\(
\alpha_m \xrightarrow{\;P\;} \alpha\) (convergence in probability).
By definition $\;0\le \alpha_m\le \zeta_1\;$ and $\;0\le \alpha\le\zeta_1\,$, hence
\[
0\le Z_m:=|\alpha_m-\alpha|^2 \le 4\zeta_1^2<\infty \quad\text{a.s.}
\]
We already have $Z_m \xrightarrow{\;P\;} 0$. We claim $ \mathbb{E}[Z_m]\to 0$.  
To prove this, assume for contradiction that $\mathbb{E}[Z_m]\not\to 0$. Then there exists $\varepsilon>0$ and a subsequence $(Z_{m_j})$ with $\mathbb{E}[Z_{m_j}]\ge\varepsilon$ for all $j$. Since $Z_{m_j}\to 0$ in probability, by the subsequence principle there exists a further sub-subsequence $(Z_{m_{j_\ell}})$ that converges to $0$ almost surely. Because $0\le Z_{m_{j_\ell}}\le 4 \zeta_1^2$ and $4\zeta_1^2$ is integrable, the dominated convergence theorem implies that
\(
\mathbb{E}[Z_{m_{j_\ell}}]\to 0,
\)
which contradicts $\mathbb{E}[Z_{m_j}]\ge\varepsilon$. Thus no such subsequence exists and we must have $\mathbb{E}[Z_m]\to 0$. Equivalently,
\(
\mathbb{E}\big[(\alpha_m-\alpha)^2\big]\to 0,
\)
i.e.\ $\alpha_m\to\alpha$ in $\mathcal L^2(\Omega, \mathcal U)$. Therefore, we obtain
\begin{equation}\label{eqn:alpha_m to alpha}
   \mathbb{E}\big[(\alpha_m-\alpha)^2\|\mathcal A\|^2\big] = \|\mathcal A\|^2\,\mathbb{E}[(\alpha_m-\alpha)^2] \to 0. 
\end{equation}
Using \eqref{eqn:A_m to A} and \eqref{eqn:alpha_m to alpha} in \eqref{eq:decomp}, we obtain
\begin{equation}\label{eqn:II}
    \text{II} \coloneq \mathbb{E}\!\left[\|\alpha_m  \mathcal A_m - \alpha  \mathcal A\|^2\right] \to 0
\quad \text{as } m\to\infty.
\end{equation}
Similarly, for the III term, assuming that $\|\Delta_{u_k^{(m)}} u_k^{(m)}\| \neq 0$ and using the definition of $\beta_k^{(m)}$ in \eqref{beta}, it can be shown that 
\(
\beta_k^{(m)} \to \beta_k
\) in $\mathcal L^2(\Omega, \mathcal U)$,
in complete analogy with the convergence $\alpha_k^{(m)} 
\to \alpha_k$ in $\mathcal L^2(\Omega, \mathcal U)$.
As a consequence, we obtain
\begin{align}\label{eq:III}
\text{III} 
&\le 2\,\mathbb{E}\Big[\|\beta_k^{(m)} \Delta_{u_k^{(m)}} (u_k^{(m)} - u_k)\|^2\Big] 
   + 2\,\mathbb{E}\Big[\|(\beta_k^{(m)} \Delta_{u_k^{(m)}} - \beta_k \Delta_{u_k}) u_k\|^2\Big] \nonumber\\
&\le 2\nu_2^2\, \mathbb{E}\Big[\|\Delta_{u_k^{(m)}} (u_k^{(m)} - u_k)\|^2 \Big] 
   + 4\,\mathbb{E}\Big[\| (\beta_k^{(m)} - \beta_k) \Delta_{u_k^{(m)}} u_k\|^2\Big] \nonumber\\
&\qquad + 4\nu_2^2\, \mathbb{E}\Big[\| (\Delta_{u_k^{(m)}} - \Delta_{u_k}) u_k \|^2\Big] \to 0 \quad \text{as } m \to \infty.
\end{align}
Here, the last convergence follows from the facts that \(\mathbb{E}[|\beta_k^{(m)} - \beta_k|^2] \to 0\), the induction hypothesis, and Lemma~\ref{lemma: contraction of Delta}.
Combining \eqref{eqn:II}, \eqref{eq:III} with \eqref{eq:exp_bound} and the induction hypothesis, 
we conclude that  
\(
\mathbb{E}\!\left[\|u_{k+1}^{(m)} - u_{k+1}\|^2\right] \to 0 \) as \( m \to \infty.
\)  
This completes the induction step, and therefore the proof.
\end{proof}

\subsection{Convergence for noisy data}\label{subsec: nonnoisy} In this subsection, we  establish the regularization property of the \texttt{E-IRMGL+$\Psi$} method introduced in Algorithm~\ref{alg:buildtree}. Specifically, we provide a detailed convergence analysis in the theorem stated below.
\begin{theorem}\label{thm: non noisy}
Assuming all the convictions of Lemma~\ref{lemma: stability} are true and $k_m$ is the stopping index  for \Cref{alg:buildtree} as specified in \cref{rule:statistical_discrepancy}, then a solution $u^\dagger$ of ($\ref{Model eqn}$) exists such that 
    \[\lim_{m \to \infty} \mathbb{E} \left[\|u_{k_m}^{(m)} - u^\dagger\|^2 \right] = 0.\]
    \end{theorem}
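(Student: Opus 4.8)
The plan is to identify $u^\dagger$ with the limit of the exact-data iterates furnished by Theorem~\ref{Convergence for exact data}, and to control the mean-square error of the stopped iterate by decomposing the expectation over the ``good'' event $\Gamma_m$ and its complement. Since $\mathbb{P}(\Gamma_m)\to 1$ and, on $\Gamma_m$, Lemma~\ref{Lemma: Monotonicity} confines every iterate up to the stopping index inside $\mathcal{B}_\wp(u^\dagger)$, I would first write
\[
\mathbb{E}\!\left[\|u_{k_m}^{(m)} - u^\dagger\|^2\right]
= \mathbb{E}\!\left[\|u_{k_m}^{(m)} - u^\dagger\|^2 \mathbf{1}_{\Gamma_m}\right]
+ \mathbb{E}\!\left[\|u_{k_m}^{(m)} - u^\dagger\|^2 \mathbf{1}_{\Gamma_m^c}\right],
\]
and dispatch the second term by combining a uniform a~priori bound on $\|u_{k_m}^{(m)} - u^\dagger\|$ (the iterates staying within $\mathcal{B}_{3\wp}(u_0^{(m)})$, so the error is $O(\wp)$) with $\mathbb{P}(\Gamma_m^c)\to 0$.

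For the term over $\Gamma_m$, I would adapt the deterministic discrepancy-principle argument of~\cite{bajpai2025convergence} to the random stopping index. Fix a deterministic reference index $K$ and split $\Gamma_m$ according to $\{k_m \ge K\}$ and $\{k_m < K\}$. On $\{k_m \ge K\}\cap\Gamma_m$ the monotonicity in Lemma~\ref{Lemma: Monotonicity}(i) gives $\|u_{k_m}^{(m)} - u^\dagger\| \le \|u_K^{(m)} - u^\dagger\|$, whence by the triangle inequality and $(a+b)^2 \le 2a^2 + 2b^2$,
\[
\mathbb{E}\!\left[\|u_{k_m}^{(m)} - u^\dagger\|^2 \mathbf{1}_{\{k_m \ge K\}\cap\Gamma_m}\right]
\le 2\,\mathbb{E}\!\left[\|u_K^{(m)} - u_K\|^2\right] + 2\,\|u_K - u^\dagger\|^2.
\]
The first summand vanishes as $m\to\infty$ by the mean-square stability of Lemma~\ref{lemma: stability}, while the second is made arbitrarily small by taking $K$ large, thanks to Theorem~\ref{Convergence for exact data}.

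The delicate part is the early-stopping event $\{k_m < K\}\cap\Gamma_m$, which I would handle by expanding over the finitely many admissible values $k_m = j$, $0 \le j < K$. For an index $j$ at which the exact iteration already solves the problem, i.e.\ $\mathcal F(u_j) = v$, the update rule freezes (since $\alpha_j \mathcal F'(u_j)^*(\mathcal F(u_j)-v) = 0$ and $\beta_j \Delta_{u_j} u_j = 0$), so $u_j = u_{j+1} = \cdots = u^\dagger$, and the corresponding contribution is $\mathbb{E}[\|u_j^{(m)} - u_j\|^2 \mathbf{1}_{\{k_m=j\}}]\le\mathbb{E}[\|u_j^{(m)} - u_j\|^2]\to 0$ by stability. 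For an index $j$ with $\|\mathcal F(u_j) - v\|>0$, I would instead show $\mathbb{P}(\{k_m=j\}\cap\Gamma_m)\to 0$: on this event the stopping criterion~\eqref{eq:stopping_rule} forces $\|\mathcal F(u_j^{(m)}) - \hat v^{(m)}\| \le \tfrac{\tau_m}{\sqrt m} z_m$, whose right-hand side tends to $0$ on $\Gamma_m$ (as $z_m \le \tfrac32\sigma$ and $\tau_m/\sqrt m \to 0$), whereas $u_j^{(m)}\to u_j$ and $\hat v^{(m)}\to v$ in probability drive the left-hand side towards $\|\mathcal F(u_j)-v\|>0$, a contradiction of positive probability. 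Bounding the integrand on $\Gamma_m$ by $\wp^2$ then yields a vanishing contribution.

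Collecting the pieces gives $\limsup_{m\to\infty}\mathbb{E}[\|u_{k_m}^{(m)} - u^\dagger\|^2] \le 2\|u_K - u^\dagger\|^2$ for every fixed $K$, and letting $K\to\infty$ closes the proof. The step I expect to be the main obstacle, and the one that departs most from the deterministic theory of~\cite{bajpai2025convergence,bianchi2025data}, is precisely the treatment of the \emph{random} stopping index: separating solution from non-solution reference indices in the early-stopping regime, quantifying via the residual that the non-solution indices are asymptotically improbable, and securing integrable control off $\Gamma_m$, where the monotonicity and ball-confinement tools are unavailable. I would address the last point either through a uniform per-step bound (using $\alpha_k^{(m)}\le\zeta_1$, $\beta_k^{(m)}\le\nu_2$) or by strengthening the confinement estimate to hold almost surely, taking care throughout to keep every estimate restricted to $\Gamma_m$ before passing to the limit.
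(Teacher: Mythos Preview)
Your proposal is correct and rests on the same three pillars as the paper's proof—the $\Gamma_m/\Gamma_m^c$ split, monotonicity on $\Gamma_m$ (Lemma~\ref{Lemma: Monotonicity}) to pass to a fixed reference index, and the combination of mean-square stability (Lemma~\ref{lemma: stability}) with the exact-data convergence (Theorem~\ref{Convergence for exact data})—but it organises the argument differently. The paper proceeds via a subsequence dichotomy, treating separately the cases $k_{m_r}\to K$ (finite) and $k_{m_r}\to\infty$; this handles the random stopping index $k_m$ as if it were a deterministic sequence, which is somewhat informal. Your route avoids subsequences altogether: you fix $K$, decompose $\Gamma_m$ into the events $\{k_m\ge K\}$ and $\{k_m=j\}$ for $j<K$, and in the latter regime distinguish indices with $\mathcal F(u_j)=v$ (where the exact iteration freezes at $u^\dagger$, so stability alone suffices) from those with $\mathcal F(u_j)\neq v$ (where the stopping criterion forces an event of vanishing probability). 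This treats the randomness of $k_m$ explicitly and yields the cleaner conclusion $\limsup_m\mathbb{E}[\|u_{k_m}^{(m)}-u^\dagger\|^2]\le 2\|u_K-u^\dagger\|^2$ for every $K$. Both arguments share the same soft spot on $\Gamma_m^c$, where neither monotonicity nor ball confinement is available; you flag this correctly, and the paper's own handling of this term (invoking Lemma~\ref{lemma: stability} at the \emph{random} index $k_{m_r}$) is no more complete than your sketch.
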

    \begin{proof}
    Consider a sequence \(\{m_r\}\) with \(m_r \to \infty\) and $\frac{\tau_{m_r}}{\sqrt{m_r}} \to 0$ as \(r \to \infty\), and let \(\hat v^{(m_r)}\) be the associated empirical mean of the measurement data.  
For each \((m_r, \hat v^{(m_r)})\), the stopping index given by \cref{rule:statistical_discrepancy} is denoted by \(k_r := k_{m_r}\).
Let \(u^\dagger\) be the solution of \eqref{Model eqn} from Theorem~\ref{Convergence for exact data}, satisfying \(\|u_k - u^\dagger\| \to 0\) as \(k \to \infty \), where \(\{u_k\}_{k \geq 0}\) are the iterates generated by the exact data counterpart of Algorithm~\ref{alg:buildtree}.  
In order to determine the intended outcome, we will look at two scenarios.

 \textit{Case-I:} Suppose that for some finite integer \(K>0\), the sequence \(k_r \to K\) as \(r \to \infty\). 
Then, for sufficiently large \(r \geq 0\), we may set \(k_r = K\). 
Consequently, by the definition of \(k_r\), it follows that 
\begin{equation*}
       \|\F (u_K^{(m_r)}) - \hat v^{(m_r)}\| \leq \frac{\tau_{m_r}}{\sqrt{m_r}} z_{m_r}.         
\end{equation*}
Further, we have
\begin{align*}
 \| \mathcal{F}(u_K) - v\| &\leq \|\F (u_K) - \F(u_K^{(m_r)})\| + \|\F(u_K^{(m_r)}) - \hat v^{(m_r)}\| + \|\hat v^{(m_r)} - v\| \\
 & \leq \|\F (u_K) - \F(u_K^{(m_r)})\| + \frac{\tau_{m_r}}{\sqrt{m_r}} z_{m_r} +  \|\hat v^{(m_r)} - v\|. 
\end{align*}
Taking the expectation and considering the assertion
\[\mathbb{E}[z_m] \leq \mathbb{E}[z_m^2]^{1/2} \leq \sigma \quad \text{and} \quad \mathbb{E}[\| v - \hat v^{(m_r)}\|] \leq \mathbb{E}[\| v - \hat v^{(m_r)}\|^2]^{1/2} = \frac{\sigma}{\sqrt{m_r}},\]
we deduce
\begin{equation*}
    \| \mathcal{F}(u_K) - v\|  \leq \mathbb{E} \left[\|\F (u_K) - \F(u_K^{(m_r)})\|\right] + \frac{\tau_{m_r}}{\sqrt{m_r}} \sigma +  \frac{\sigma}{\sqrt{m_r}}. 
\end{equation*}
Taking the limit as \( r \to \infty \) and applying Lemma \ref{lemma: stability} along with the continuity of $\F$, we obtain \( \F (u_{K}) = v \). Then, using Theorem \ref{Convergence for exact data}, it can be shown that \(u_{k_r} = u_K = u^\dagger \). As a result, we deduce that
\[\mathbb{E}\left[ \|u_{k_r}^{(m_r)} - u_{k_r}\|^2 \right] = \mathbb{E}\left[ \|u_{k_r}^{(m_r)} - u^\dagger\|^2  \right] \to 0 \quad \text{as } r \to \infty.\]
 \textit{Case-II:} 
Assume that there exists a subsequence $\{m_r\}$ with $m_r \to \infty$ such that the corresponding stopping indices satisfy $k_r := k_{m_r} \to \infty$ as $r \to \infty$. 
Let $k$ be a fixed but sufficiently large integer.  
From Lemma~\ref{lemma: monotonicity for exact data}, there exists $\varepsilon > 0$ such that  
\(
\|u_k - u^\dagger\| \leq \tfrac{\sqrt \varepsilon}{2 \sqrt 2}.
\)  
Moreover, by Lemma~\ref{lemma: stability}, for this fixed $k$ there exists $n = n(k,\varepsilon)$ such that  
\[
\mathbb{E}\!\left[\|u_k^{(m)} - u_k\|^2\right]^{1/2} \leq \tfrac{ \sqrt \varepsilon}{2 \sqrt 2}, 
\quad \forall \; m > n(k,\varepsilon).
\]  
Let $\chi_{\Gamma_{m_r}}$ denotes the characteristic function of $\Gamma_{m_r}$. Then, as in \cite[Theorem 3.10]{jin2023dual}, we have 
\begin{equation}\label{eqn:expectation_split}
    \mathbb{E}\!\left[ \|u_{k_r}^{(m_r)} - u^\dagger\|^2 \right] 
    = \underbrace{\mathbb{E}\!\left[ \|u_{k_r}^{(m_r)} - u^\dagger\|^2 \chi_{\Gamma_{m_r}} \right]}_{\text{I}} 
    + \underbrace{\mathbb{E}\!\left[ \|u_{k_r}^{(m_r)} - u^\dagger\|^2 \chi_{\Gamma^{c}_{m_r}} \right]}_{\text{II}}.
\end{equation}
We now proceed to bound both terms in \eqref{eqn:expectation_split}. 
For the first term, observe that for large enough $r$ such that $k_r > k$, 
Lemma~\ref{Lemma: Monotonicity} ensures that, on the event $\Gamma_{m_r}$, 
the iterates exhibit monotonicity in expectation. That is, 
\[
\mathbb{E}\!\left[\|u_{k_r}^{(m)} - u^\dagger\|^2\right]^{1/2} 
   \leq \mathbb{E}\!\left[\|u_{k}^{(m)} - u^\dagger\|^2\right]^{1/2}.
\]  
Finally, by the triangle inequality inside the expectation norm, we obtain  
\[
\mathbb{E}\!\left[\|u_{k}^{(m)} - u^\dagger\|^2\right]^{1/2} 
   \leq \mathbb{E}\!\left[\|u_{k}^{(m)} - u_k\|^2\right]^{1/2} 
   + \|u_k - u^\dagger\| 
   \leq \sqrt \frac{  \varepsilon}{  2 }.
\]  
Thus,  
\[
 \text{I} := \mathbb{E}\!\left[ \|u_{k_r}^{(m_r)} - u^\dagger\|^2 \chi_{\Gamma_{m_r}} \right] \leq \mathbb{E}\!\left[ \|u_{k_r}^{(m_r)} - u^\dagger\|^2  \right]   \leq \frac{ \varepsilon}{2}.
\]  
To bound the second term, observe that $\lim_{r \to \infty} \mathbb{P}(\Gamma^c_{m_r}) =0.$ So it suffices to bound 
$\mathbb{E}\!\left[ \|u_{k_r}^{(m_r)} - u^\dagger\|^2 \right]$ with a constant, independent of $m_r$. Since
\[
\mathbb{E}\!\left[\|u_{k_r}^{(m_r)} - u^\dagger\|^2\right]^{1/2} 
   \leq \mathbb{E}\!\left[\|u_{k_r}^{(m_r)} - u_{k_r}\|^2\right]^{1/2} 
   + \|u_k - u^\dagger\|.
\]
It follows from \eqref{eqn:used_in_last_thm} that  the sequence $\{\|u_k - u^\dagger\|\}$ is bounded by $\|u_0 - u^\dagger\|$. Moreover, if $r$ is sufficiently large then from Lemma~\ref{lemma: stability}, $\mathbb{E}\!\left[\|u_{k_r}^{(m_r)} - u_{k_r}\|^2\right]^{1/2}$ is also bounded by some constant $ M > 0$. Hence
\[
\text{II} \coloneq \mathbb{E}\!\left[\|u_{k_r}^{(m_r)} - u^\dagger\|^2 \Gamma^c_{m_r}\right]  \leq M \mathbb{P}(\Gamma^c_{m_r}) \leq \frac{\varepsilon}{2}.
\]
Now, with the bounds of I and II in \eqref{eqn:expectation_split}, we have 
\[ \mathbb{E}\!\left[ \|u_{k_r}^{(m_r)} - u^\dagger\|^2 \right] = \text{I} + \text{II} \leq \frac{\varepsilon}{2} + \frac{\varepsilon}{2}= \varepsilon\]
which completes the proof.
    \end{proof}

\section{Convergence analysis under heuristic rule}\label{sec:heuristic}
In this section, we analyze the regularization properties of \texttt{E-IRMGL+$\Psi$} \eqref{main iterative scheme} when terminated according to \cref{rule:heuristic_discrepancy}. Throughout this analysis, we assume that Assumption~\ref{assump:empirical} holds. To facilitate the subsequent discussion, we present the scheme in the form of \Cref{alg:heuristic}.
\begin{algorithm}
\caption{\texttt{E-IRMGL+\(\Psi\)} with heuristic principle}
\label{alg:heuristic}
\begin{algorithmic}[1]
\State \textbf{Input:} Operator $\F$, measurements $v_1, v_2, \ldots v_m,$ initial reconstructor $\Psi_\theta$, parameters $\theta, \lambda, R,$ and $\varrho \geq 1$.
\State \textbf{Set:} $\hat v^{(m)} \coloneq \frac{1}{m} \sum_{i=1}^{m}v_i$ and $z_m \coloneq \sqrt{\frac{1}{m-1} \sum_{i=1}^{m} \|v_i - \hat v^{(m)}\|^2}.$
\State \textbf{Initialize:} $u_0^{(m)} \coloneq \Psi_\theta(\hat{v}^{(m)})$.
\State \textbf{Iteration:} For $k = 0,1,\dots,k_\infty$ (with $u_k^{(m)} \in \mathcal{D}(\F)$), compute
    \begin{equation*}
       u_{k+1}^{(m)} \coloneq u_k^{(m)} 
       - \alpha_k^{(m)} \F'(u_k^{(m)})^{*}\!\left(\F(u_k^{(m)}) - \hat v^{(m)}\right)
       - \beta_k^{(m)} \Delta_{u_k^{(m)}}u_k^{(m)},
    \end{equation*}
    where $\alpha_k^{(m)}$ and $\beta_k^{(m)}$ are given by~\eqref{alpha} and~\eqref{beta}, and 
    $\Delta_{u_k^{(m)}}$ is defined in \Cref{subsec: Graph Theory}.
\State \textbf{Stopping rule:} Determine the index
    \[
      k_m^* \in \arg\min \Bigl\{ \Omega(k,\hat v^{(m)}) \;:\; 0 \leq k \leq k_\infty \Bigr\},
      \quad \text{where }\]
   \[  \Omega(k, \hat v^{(m)}) \coloneq (k+\varrho)\|\F(u_k^{(m)}) - \hat v^{(m)}\|^2.
    \]
\State \textbf{Output:} $u_{k_m^*}^{(m)}$.
\end{algorithmic}
\end{algorithm}
\begin{remark}
    Rule~\ref{rule:heuristic_discrepancy} is straightforward to implement. During iteration, we record 
$\Omega(k, \hat v^{(m)})$ against $k$ and, after sufficient iterations, select 
$k^*_m$ that minimizes it. Since the iterative regularization methods for nonlinear problems may exhibits 
only local convergence~\cite{hubmer2022numerical}, so the maximum iteration count must be 
chosen large enough to avoid mistaking a local for the global minimum. 
The rule requires a fixed constant $\varrho$, introduced in~\cite{jin2017heuristic} for the 
augmented Lagrangian method, where a similar stopping strategy was used. 
Choosing $\varrho$ sufficiently large ensures an accurate solution and prevents 
$k_m^*$ from being underestimated. Previous numerical and analytical studies
\cite{jin2017heuristic,real2024hanke, zhang2018heuristic} indicate that the lower bound of $\varrho$ depends on the 
regularization method and for the Landweber iteration $\varrho \ge 1$ is essential. In our method we also require that $\varrho \geq 1$,
which can be seen later in Lemma~\ref{lemmma:heuristic_first}.
\end{remark}
\subsection{Preliminary analysis}
We first establish that  
\begin{equation}\label{eq:Omega_to_zero}
    \Omega(k_m^*, \hat v^{(m)}) \to 0 \quad \text{as } m \to \infty,
\end{equation}  
where \(\Omega(\cdot, \cdot)\) is defined in \cref{rule:heuristic_discrepancy}.  
To this end, we introduce an auxiliary index \(\hat{k}_m\), determined by the stopping rule as the smallest integer satisfying  
\begin{equation}\label{eq:stopping_rule_modifed}
    \left\|\mathcal{F}(u^{(m)}_{\hat k_m}) - \hat v^{(m)}\right\|^2 
    + \frac{\mathcal{M}}{(\hat k_m + \varrho)^2} 
    \leq \frac{\tau_m^2}{m}\, z_m^2,
\end{equation}  
where \(\tau_m \geq 1\) may depend on \(m\), and  
\(
    \mathcal{M} = \frac{3\mathcal{H}\wp^2}{2(1+\eta) \zeta_1}.
\)
It is worth noting that the stopping rule in \eqref{eq:stopping_rule_modifed} is a modification of 
\cref{rule:statistical_discrepancy} through the inclusion of the additional term 
$\tfrac{\mathcal M}{(k + \varrho)^2}$. The presence of this term ensures that 
$\hat k_m \to \infty$ as $m \to \infty$, which is a crucial ingredient in the proof of 
\eqref{eq:Omega_to_zero}.

\begin{lemma}\label{lemmma:heuristic_first}
Suppose that $\mathcal{C}>0$ as defined in \eqref{assum: On C} and all the assumptions of Lemma~\ref{Lemma: Monotonicity} satisfies, then the following hold. 
\begin{enumerate}
\item[$(i.)$] $u_k^{(m)} \in \mathcal{B}_{3\wp} (u_0)$ for all $0 \leq k \leq \hat k_m,$ where $\hat k_m$ is the stopping index as in \eqref{eq:stopping_rule_modifed}.
    \item[$(ii.)$] $\hat k_m$ 
is a finite integer and $\hat k_m \to \infty$ as $m \to \infty.$
\end{enumerate}
\end{lemma}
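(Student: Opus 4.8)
The plan is to mimic the proof of Lemma~\ref{Lemma: Monotonicity}, but to replace its exact monotonicity, which rested on the residual staying above the effective noise level, by a \emph{quasi}-monotonicity in which the failure of the modified rule \eqref{eq:stopping_rule_modifed} contributes a summable defect. Since $u^\dagger\in\mathcal B_\wp(u_0^{(m)})$ by (A5), the inclusion $u_k^{(m)}\in\mathcal B_{3\wp}(u_0^{(m)})$ claimed in $(i.)$ reduces to showing $\|u_k^{(m)}-u^\dagger\|\le 2\wp$ for $0\le k\le\hat k_m$; I would prove this by induction, which simultaneously keeps every iterate inside the region where Assumption~\ref{assump:Fi} is available.

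The crucial ingredient is a one-step estimate for $k<\hat k_m$, derived on the event $\Gamma_m$. Writing $g_k^{(m)}=u_k^{(m)}-u^\dagger$ and reproducing the expansion \eqref{split eqn}--\eqref{ek, dk} verbatim (it uses only (A2)--(A4), valid on $\mathcal B_{3\wp}(u_0^{(m)})$), the only term that can no longer be treated as in Lemma~\ref{Lemma: Monotonicity} is the cross term $2(1+\eta)\alpha_k^{(m)}\|v-\hat v^{(m)}\|\,\|\F(u_k^{(m)})-\hat v^{(m)}\|$, since the lower bound on the residual fails past the ordinary stopping index. Instead, for $k<\hat k_m$ the modified rule is violated, so $\tfrac{\tau_m^2}{m}z_m^2<\|\F(u_k^{(m)})-\hat v^{(m)}\|^2+\tfrac{\mathcal M}{(k+\varrho)^2}$, while $\Gamma_m$ supplies $\|v-\hat v^{(m)}\|\le\tfrac{\tau_m}{\mathcal H\sqrt m}z_m$; together these give $\|v-\hat v^{(m)}\|\le\tfrac1{\mathcal H}\big(\|\F(u_k^{(m)})-\hat v^{(m)}\|^2+\tfrac{\mathcal M}{(k+\varrho)^2}\big)^{1/2}=:\tfrac1{\mathcal H}s_k$. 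Because $\|\F(u_k^{(m)})-\hat v^{(m)}\|\le s_k$, the elementary bound $s_k\|\F(u_k^{(m)})-\hat v^{(m)}\|\le s_k^2$ splits the cross term into $\tfrac{2(1+\eta)\alpha_k^{(m)}}{\mathcal H}\|\F(u_k^{(m)})-\hat v^{(m)}\|^2$, exactly the quantity absorbed in Lemma~\ref{Lemma: Monotonicity}, plus a remainder $\tfrac{2(1+\eta)\alpha_k^{(m)}}{\mathcal H}\tfrac{\mathcal M}{(k+\varrho)^2}$. The value $\mathcal M=\tfrac{3\mathcal H\wp^2}{2(1+\eta)\zeta_1}$ is engineered precisely so that, using $\alpha_k^{(m)}\le\zeta_1$, this remainder is at most $\tfrac{3\wp^2}{(k+\varrho)^2}$. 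Collecting terms as in Lemma~\ref{Lemma: Monotonicity} (with $\alpha_k^{(m)}\ge\zeta$) then yields the quasi-monotonicity
\[
\|g_{k+1}^{(m)}\|^2-\|g_k^{(m)}\|^2\le -2\mathcal C\,\|\F(u_k^{(m)})-\hat v^{(m)}\|^2+\frac{3\wp^2}{(k+\varrho)^2},
\]
with $\mathcal C>0$ from \eqref{assum: On C}.

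Assertion $(i.)$ then follows by induction: summing the defect and invoking $\sum_{j\ge0}(j+\varrho)^{-2}<\infty$ (finite since $\varrho\ge1$) gives the uniform bound $\|g_k^{(m)}\|^2\le\|g_0^{(m)}\|^2+3\wp^2\sum_{j\ge0}(j+\varrho)^{-2}$, which keeps $\|g_k^{(m)}\|\le2\wp$, hence $u_k^{(m)}\in\mathcal B_{3\wp}(u_0^{(m)})$, closing the induction. The same summation yields $\sum_k\|\F(u_k^{(m)})-\hat v^{(m)}\|^2<\infty$, whence $\|\F(u_k^{(m)})-\hat v^{(m)}\|\to0$; combined with $\tfrac{\mathcal M}{(k+\varrho)^2}\to0$ and the fact that $\tfrac{\tau_m^2}{m}z_m^2>0$ is fixed for fixed $m$, the condition \eqref{eq:stopping_rule_modifed} must be met after finitely many steps. (If it never were, the defect estimate would apply for every $k$, forcing the left-hand side of \eqref{eq:stopping_rule_modifed} to $0$, a contradiction.) This proves $\hat k_m<\infty$.

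For the divergence in $(ii.)$, at termination the stopping inequality forces in particular $\tfrac{\mathcal M}{(\hat k_m+\varrho)^2}\le\tfrac{\tau_m^2}{m}z_m^2$, i.e.\ $\hat k_m+\varrho\ge\sqrt{\mathcal M\,m}\,/(\tau_m z_m)$; on $\Gamma_m$ one has $z_m\le\tfrac32\sigma$, and since $\tfrac{\tau_m}{\sqrt m}\to0$ the right-hand side diverges, giving $\hat k_m\to\infty$ as $m\to\infty$ (on $\Gamma_m$, whose probability tends to $1$). I expect the principal obstacle to lie in the quasi-monotonicity step once the iterates leave $\mathcal B_\wp(u^\dagger)$: bounding the graph-Laplacian contribution $\beta_k^{(m)}\langle g_k^{(m)},\Delta_{u_k^{(m)}}u_k^{(m)}\rangle$ then uses $\|g_k^{(m)}\|\le2\wp$ rather than $\le\wp$, which enlarges the coefficient of $\|\F(u_k^{(m)})-\hat v^{(m)}\|^2$ by $2\wp\nu_0$, so one must confirm that this coefficient stays negative—exploiting the slack in the standing lower bound on $\zeta$, or the fact that such drift occurs only after the residual is small, where $\sum_k\|\F(u_k^{(m)})-\hat v^{(m)}\|^2<\infty$ already controls the excess.
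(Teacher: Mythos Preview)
Your overall strategy matches the paper's exactly: derive a quasi-monotone estimate on $\Gamma_m$ using the violation of \eqref{eq:stopping_rule_modifed}, sum the defect to stay inside $\mathcal B_{3\wp}(u_0)$, and read off both finiteness and divergence of $\hat k_m$ from the resulting bounds. The arguments for $(ii.)$ are fine.

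There is, however, one concrete arithmetic gap in your step toward $(i.)$. From $\|v-\hat v^{(m)}\|\le s_k/\mathcal H$ you bound the cross term via $s_k\|\F(u_k^{(m)})-\hat v^{(m)}\|\le s_k^2$, which produces the defect $3\wp^2/(k+\varrho)^2$. Summing this gives
\[
\|g_k^{(m)}\|^2\ \le\ \wp^2+3\wp^2\sum_{j\ge0}(j+\varrho)^{-2}\ =\ \wp^2+\tfrac{\pi^2}{2}\wp^2\ \approx\ 5.93\,\wp^2,
\]
so your claim $\|g_k^{(m)}\|\le2\wp$ does \emph{not} follow, and the induction does not close inside $\mathcal B_{3\wp}(u_0^{(m)})$. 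The paper avoids this by using the sharper AM--GM-type bound $s_k\,r_k\le r_k^2+\tfrac12\mathcal M/(k+\varrho)^2$ (with $r_k=\|\F(u_k^{(m)})-\hat v^{(m)}\|$), which halves the defect to $\tfrac{3\wp^2}{2(k+\varrho)^2}$; then $\sum_{j\ge0}(j+\varrho)^{-2}<2$ gives $\|g_k^{(m)}\|^2\le\wp^2+3\wp^2=4\wp^2$, exactly what is needed. This is the only substantive fix you need.

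Regarding your final worry about $\wp$ versus $2\wp$ in the graph-Laplacian term: the paper simply reuses the estimate \eqref{eqn:xk_dk} with the coefficient $\wp\nu_0$ verbatim, without re-deriving it under the weaker hypothesis $\|g_k^{(m)}\|\le2\wp$. So the issue you flag is a genuine sloppiness that the paper itself glosses over; your instinct to invoke extra slack in the lower bound on $\zeta$ is the natural way to make it rigorous.
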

\begin{proof}
    Following the same procedure of Lemma~\ref{Lemma: Monotonicity}, we have 
    \begin{equation}\label{split_eqn_heuristic}
    \|g_{k+1}^{(m)}\|^2 - \|g_k^{(m)}\|^2 = \|h_k^{(m)}\|^2 - 2\langle g_k^{(m)}, h_k^{(m)} \rangle
\end{equation}
along with the estimates 
\begin{align}\label{eqn:xk_dk}
  - \langle g_k^{(m)}, h_k^{(m)} \rangle & \leq - (1 - \eta)\alpha_k^{(m)}\|\F(u_k^{(m)}) - \hat v^{(m)}\|^2 + \wp\nu_0 \|\F(u_k^{(m)}) -\hat  v^{(m)}\|^2 \\ 
   \nonumber 
   & \quad + (1 + \eta)\alpha_k^{(m)}\|v - \hat v^{(m)}\|\|\F(u_k^{(m)}) - \hat v^{(m)}\|\\ \label{norm_dk}
   \|h_k^{(m)}\|^2  &\leq 2(\zeta_0^2 + \nu_0 \nu_1)\|\F(u_k^{(m)}) - \hat{v}^{(m)}\|^2. 
\end{align}
Now, utilizing the property of the space $\Gamma_m$ and according to the definition of $\hat k_m,$ for $k < \hat k_m$ we have
\[\|v - \hat v^{(m)}\| \leq \frac{\tau_m}{\mathcal{H}\sqrt{m}}z_m \leq \frac{1}{\mathcal{H}}\left( \|\F(u_k^{(m)}) - \hat{v}^{(m)}\|^2 + \frac{\mathcal{M}}{(k + \varrho)^2} \right)^{1/2}\]
and by the inequality $2a_1 a_2 \leq a_1^2 + a_2^2$ for $a_1, a_2 \in \mathbb{R}$, we further have
\begin{equation}\label{eqn:residual_and_km}
 \|v - \hat v^{(m)}\|\|\F(u_k^{(m)}) - \hat v^{(m)}\| 
 \leq \frac{1}{\mathcal{H}} \left( \|\F(u_k^{(m)}) - \hat v^{(m)}\|^2 + \frac{\mathcal M}{2(k + \varrho)^2} \right).
\end{equation}
Using \eqref{eqn:xk_dk}, \eqref{norm_dk} and \eqref{eqn:residual_and_km} in \eqref{split_eqn_heuristic},  we get
\begin{align*}
    \|g_{k+1}^{(m)}\|^2 - \|g_k^{(m)}\|^2 \leq -2\mathcal C \|\F(u_k^{(m)}) - \hat v^{(m)}\|^2 + \frac{ \mathcal  M(1+\eta)\zeta_1}{\mathcal{H}( k + \varrho)^2}.
\end{align*}
Now, taking the sum from $n=0$ to $n=k$ on both side, we further obtain
\begin{equation}\label{eqn:heuristic last}
     \|g_{k+1}^{(m)}\|^2 + 2 \mathcal C \sum_{n=0}^k \|\F(u_n^{(m)}) - \hat v^{(m)}\|^2 \leq \|g_0^{(m)}\|^2 + \frac{ \mathcal M(1 + \eta)\zeta_1}{\mathcal{H}} \sum_{n=0}^k \frac{1}{(n+ \varrho)^2}.
\end{equation}
Note that $\varrho \geq 1, \mathcal{M} = \frac{3\mathcal{H}\wp^2}{2(1 + \eta) \zeta_1}$ and since $ \sum_{n=0}^{k} \frac{1}{(n+\varrho)^2} \leq \sum_{n=0}^{\infty} \frac{1}{(n+\varrho)^2} = \frac{\pi^2}{6} <2,$ it yields
\[\|u_{k+1}^{(m)} - u^\dagger\|^2 \leq \|u_0^{(m)} - u^\dagger\|^2 + 3\wp^2 \leq 4\wp^2.\]
Hence, we have \(u_{k+1}^{(m)} \in \mathcal{B}_{2\wp}(u^\dagger).\) Since $\|u_0 - u^\dagger\| \leq \wp,$ we therefore have $u_{k+1}^{(m)} \in \mathcal{B}_{3\wp}(u_0)$.
To prove $(ii)$, let us suppose a contrary, that is $\hat{k}_m$ is infinite. 
Then from \eqref{eqn:heuristic last}, we have
\begin{equation}\label{eqn:heuristic_residual_finite}
     2\mathcal C \sum_{n=0}^k \|\F(u_n^{(m)}) - \hat v^{(m)}\|^2 \leq 4\wp^2 \quad \forall \; k \geq 0.
\end{equation}
Additionally, the definition of $\hat k_m$ tells us that for every integer $k \geq 0$ we have 
\[ \frac{\tau_m^2}{m}z_m^2 - \frac{\mathcal M}{(k + \varrho)^2} <\|\F(u_k^{(m)}) - \hat v^{(m)}\|^2.\]

Summing up the above inequality from $n=0$ to $n=k < \hat k_m$ and then using \eqref{eqn:heuristic_residual_finite},
\[ \frac{ \tau_m^2z_m^2}{m}(k+1) \leq \mathcal{M} \sum_{n=0}^k\frac{1}{(n + \varrho)^2} + \frac{2\wp^2}{\mathcal C} \leq \frac{\mathcal{M}\pi^2}{6} +  \frac{2\wp^2}{\mathcal C}.  \]
Taking the expectation and employing
\(\mathbb{E}\left[\|v- \hat v^{(m)}\|^2\right] = \frac{\sigma^2}{m} = \mathbb{E}\left[\frac{z_m^2}{m}\right]\), we obtain
\[\tau_m^2(k+1)\mathbb{E}\left[\|v- \hat v^{(m)}\|^2\right] \leq \frac{\mathcal{M}\pi^2}{6} +  \frac{2\wp^2}{\mathcal C}< \infty.\]
Letting $k \to \infty,$ we get a contradiction. Hence $\hat k_m$ must be finite. Finally from \eqref{eq:stopping_rule_modifed}
\[ \frac{\mathcal M}{(\hat k_m + \varrho)^2} 
    \leq \left(\frac{\tau_m}{\sqrt{m}} \right)^2 z_m^2 \to 0 \quad \text{as } m \to \infty. \]
    Therefore, we must have $\hat k_m \to \infty$ as $m \to \infty.$ Thus the result.
\end{proof}
By virtue of Assumption~\ref{assump:empirical}, it directly follows that 
\cref{rule:heuristic_discrepancy} determines a finite index $k_m^*$. 
If $k_\infty$ is already finite, the statement is evident. Thus, we restrict 
our attention to the case $k_\infty = \infty$. According to 
\cref{rule:heuristic_discrepancy}, we have
\[
\Omega(k, \hat v^{(m)}) = (k + \wp)
\| \F(u_k^{(m)}) - \hat v^{(m)} \|^2
\ge (k + \wp)\kappa^2
\|v - \hat v^{(m)}\|^2.
\]
Taking the expectation
\begin{equation}\label{eqn:above_Lemma_4.3}
  \mathbb{E}\left[\Omega(k, \hat v^{(m)})\right]
\ge (k + \wp)\kappa^2
\mathbb{E}\left[\|v - \hat v^{(m)}\|^2\right] = (k + \wp)\kappa^2 \mathbb{E}\left[\frac{z_m^2}{m} \right]
\to \infty.  
\end{equation}
as $k \to \infty$. Hence, a finite integer $k_m^*$ must exist at which 
$\Omega(k, \hat v^{(m)})$ attains its minimum. Building upon this observation, the next lemma establishes the main result of this subsection, and its proof is motivated by the approaches outlined in \cite{zhang2018heuristic} and \cite{real2024hanke}.
\begin{lemma}\label{lemma:heuristic_secound}
Suppose that $\mathcal{C} > 0$, as specified in \eqref{assum: On C}, and that all the assumptions of Lemma~\ref{Lemma: Monotonicity} are satisfied. Let $k_m^*$ denote the stopping index determined according to Rule~1.
 Then 
    \[\Omega(k_m^*, \hat v^{(m)}) \to 0 \quad \text{as } m \to \infty.\]
Consequently, 
\[\|\mathcal{F}(u_{k_m^*}^{(m)}) - \hat v^{(m)}\| \to 0 \quad \text{and} \quad k_m^* \mathbb{E}\left[\frac{ z_m^2}{ m  }\right] \to 0\; \text{ as } m \to \infty.\]
\end{lemma}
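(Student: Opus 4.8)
The plan is to combine the minimizing property of $k_m^*$ with the stability of the scheme and the summability of the exact-data residuals. Since the stopping index is random, I read the claim as $\mathbb{E}\big[\Omega(k_m^*,\hat v^{(m)})\big] \to 0$ and work throughout in expectation. The argument is a two-stage limit: for a fixed auxiliary index $j$ I first send $m \to \infty$ using Lemma~\ref{lemma: stability}, and then I send $j \to \infty$ using \eqref{sum for non noisy case}.

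First I would fix $j \ge 0$. Because $k_m^*$ minimizes $\Omega(\cdot,\hat v^{(m)})$ over $[0,k_\infty]$, the bound $\Omega(k_m^*,\hat v^{(m)}) \le \Omega(j,\hat v^{(m)})$ holds whenever $j \le k_\infty$. Here Lemma~\ref{lemmma:heuristic_first} is essential: it ensures that the iterates remain in $\mathcal D(\F)$ for all $k \le \hat k_m$ and that $\hat k_m \to \infty$, so that $k_\infty \ge \hat k_m$ and $\mathbb{P}(k_\infty \ge j) \to 1$ for every fixed $j$. To make the comparison rigorous I would split the expectation over the events $\{k_\infty \ge j\}$ and $\{k_\infty < j\}$, exactly as in the proof of Theorem~\ref{thm: non noisy}: on the former I use $\Omega(k_m^*,\hat v^{(m)}) \le \Omega(j,\hat v^{(m)})$, and on the latter I use the crude uniform bound $\Omega(k_m^*,\hat v^{(m)}) \le \Omega(0,\hat v^{(m)}) = \varrho\,\|\F(u_0^{(m)}) - \hat v^{(m)}\|^2$, which is bounded in expectation since $u_0^{(m)} \in \mathcal B_\wp(u^\dagger)$ and $\F'$ is bounded; this contribution vanishes because $\mathbb{P}(k_\infty < j) \to 0$.

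Next, for this fixed $j$, I would pass to the limit in $\mathbb{E}[\Omega(j,\hat v^{(m)})] = (j+\varrho)\,\mathbb{E}\big[\|\F(u_j^{(m)}) - \hat v^{(m)}\|^2\big]$. Decomposing $\F(u_j^{(m)}) - \hat v^{(m)} = \big(\F(u_j^{(m)}) - \F(u_j)\big) + \big(\F(u_j) - v\big) + \big(v - \hat v^{(m)}\big)$ and using the Lipschitz estimate $\|\F(u_j^{(m)}) - \F(u_j)\| \le B\,\|u_j^{(m)} - u_j\|$ from (A2), Lemma~\ref{lemma: stability}, and $\mathbb{E}[\|v - \hat v^{(m)}\|^2] = \sigma^2/m \to 0$, I obtain that $\F(u_j^{(m)}) - \hat v^{(m)} \to \F(u_j) - v$ in $L^2(\Omega)$, whence $\mathbb{E}[\Omega(j,\hat v^{(m)})] \to (j+\varrho)\,\|\F(u_j) - v\|^2$. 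Combined with the previous step this yields $\limsup_{m\to\infty}\mathbb{E}[\Omega(k_m^*,\hat v^{(m)})] \le (j+\varrho)\,\|\F(u_j) - v\|^2$ for every $j$.

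Finally I would let $j \to \infty$. From \eqref{sum for non noisy case} the series $\sum_{k \ge 0}\|\F(u_k) - v\|^2$ converges, which forces $\liminf_{j\to\infty}(j+\varrho)\,\|\F(u_j) - v\|^2 = 0$; indeed, if $(j+\varrho)\,\|\F(u_j)-v\|^2 \ge c > 0$ for all large $j$, then $\|\F(u_j)-v\|^2 \ge c/(j+\varrho)$ and the series diverges. Hence $\inf_j (j+\varrho)\,\|\F(u_j)-v\|^2 = 0$, and the $\limsup$ bound above gives $\mathbb{E}[\Omega(k_m^*,\hat v^{(m)})] \to 0$. I expect this interchange of limits to be the main obstacle: since $(j+\varrho)\,\|\F(u_j)-v\|^2$ need not be monotone in $j$, only the $\liminf$ is available, so one must route the argument through the infimum rather than a genuine limit. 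The two consequences then follow quickly: $\varrho \ge 1$ gives $\|\F(u_{k_m^*}^{(m)}) - \hat v^{(m)}\|^2 \le \Omega(k_m^*,\hat v^{(m)})$, so the residual tends to $0$ in mean square; and on the high-probability event of Assumption~\ref{assump:empirical} one has $\Omega(k_m^*,\hat v^{(m)}) \ge \kappa^2 (k_m^*+\varrho)\,\|\hat v^{(m)} - v\|^2$, so taking expectations and recalling $\mathbb{E}[\|\hat v^{(m)} - v\|^2] = \mathbb{E}[z_m^2/m]$ delivers the asserted decay $\mathbb{E}\big[(k_m^*+\varrho)\,\|\hat v^{(m)} - v\|^2\big] \to 0$.
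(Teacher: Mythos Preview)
Your argument is essentially sound, but it takes a genuinely different route from the paper and, in doing so, invokes Lemma~\ref{lemma: stability}. That lemma requires Assumption~\ref{hypothesis stability} on the reconstructor $\Psi_\theta$, which is \emph{not} among the stated hypotheses of the present lemma (these are only the hypotheses of Lemma~\ref{Lemma: Monotonicity}). So strictly speaking you have proved a weaker statement. In the global flow of the paper this is harmless, since Assumption~\ref{hypothesis stability} is needed anyway for the final convergence theorem, but as a proof of the lemma as stated it is a gap.

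The paper's argument avoids stability and the exact-data iterates altogether. From the proof of Lemma~\ref{lemmma:heuristic_first} one already has the noisy-residual bound $\sum_{k=0}^{\hat k_m-1}\|\F(u_k^{(m)})-\hat v^{(m)}\|^2 \le 2\wp^2/\mathcal C$. The minimality of $k_m^*$ is then exploited against \emph{all} indices $k<\hat k_m$ simultaneously: since $\Omega(k_m^*,\hat v^{(m)}) \le (k+\varrho)\|\F(u_k^{(m)})-\hat v^{(m)}\|^2$ for each such $k$, dividing by $k+\varrho$ and summing gives
\[
\Omega(k_m^*,\hat v^{(m)})\sum_{k=0}^{\hat k_m-1}\frac{1}{k+\varrho}\;\le\;\frac{2\wp^2}{\mathcal C},
\]
and the harmonic sum diverges because $\hat k_m\to\infty$. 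This is shorter, matches the stated hypotheses, and even yields the explicit rate $\Omega(k_m^*,\hat v^{(m)}) = O\bigl(1/\log \hat k_m\bigr)$. Your two-stage limit (fix $j$, send $m\to\infty$ via stability, then optimize over $j$ using \eqref{sum for non noisy case}) trades this directness for an appeal to the stability machinery of Section~\ref{subsec: stability}, which is unnecessary here.
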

\begin{proof}
    From \eqref{eqn:heuristic_residual_finite} of the proof of Lemma~\ref{lemmma:heuristic_first}, we have
    \[ \sum_{k=0}^{\hat k_m -1} \|\F(u_k^{(m)}) - \hat v^{(m)}\|^2 \leq \frac{2\wp^2}{\mathcal C}.\]
    By the minimality of $\Omega(k_m^*, \hat v^{(m)})$, we have
\[
\|\F(u_k^{(m)}) - \hat v^{(m)}\|^2 = \frac{\Omega(k, \hat v^{(m)})}{k+\varrho} 
   \geq \frac{\Omega(k_m^*, \hat v^{(m)})}{k + \varrho}.
\]
Summing over $k = 0,1,\dots, \hat k_m-1$, it follows that
\[
\sum_{k=0}^{\hat k_m-1} \frac{1}{k+\varrho}\,\Omega(k_m^*, \hat v^{(m)}) 
   \leq \sum_{k=0}^{\hat k_m-1} \|\F(u_k^{(m)}) - \hat v^{(m)}\|^2
   \leq \frac{2\wp^2}{\mathcal C}.
\]
Moreover,
\[
\sum_{k=0}^{\hat k_m-1} \frac{1}{k+\varrho} 
   \geq \int_0^{\hat k_m} \frac{dt}{t+\varrho} 
   = \log\!\left(\frac{\hat k_m+\varrho}{\varrho}\right).
\]
Hence,
\[
\Omega(k_m^*, \hat v^{(m)}) 
   \leq \frac{2 \wp^2}{\mathcal C \, \log\!\big(\tfrac{\hat k_m+\varrho}{\varrho}\big)}.
\]
Since $\hat k_m \to \infty$ as $m \to \infty$, we deduce that 
\(
    \Omega(k_m^*, \hat v^{(m)}) \to 0.
\)
The remaining assertions then follow directly from this convergence, together with Assumption~\ref{assump:empirical} and \eqref{eqn:above_Lemma_4.3}.
\end{proof}
\subsection{Convergence}
We now turn to the proof of the main result of this section. Prior to that, we establish a lemma that will play a key role in its derivation.
\begin{lemma}\label{eqn:lemma_third}
Suppose that $\mathcal{C} > 0$, as specified in \eqref{assum: On C}, and that all the assumptions of Lemma~\ref{Lemma: Monotonicity} are satisfied. Let $k_m^*$ denote the stopping index determined according to \cref{rule:heuristic_discrepancy}. Then $u_{k}^{(m)} \in \mathcal{B}_{3\wp}(u_0)$ for all $0 \leq k \leq k_m^*$ if $m$ is sufficiently large.
\end{lemma}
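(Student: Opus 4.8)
The bound $u_k^{(m)} \in \mathcal{B}_{3\wp}(u_0)$ is already supplied by Lemma~\ref{lemmma:heuristic_first} for every $k \le \hat k_m$, so the entire difficulty lies in the possibility that $k_m^* > \hat k_m$, where the residual-based control of the noise used in Lemma~\ref{lemmma:heuristic_first} is no longer available. My plan is to prove the slightly stronger statement that $\|u_k^{(m)} - u^\dagger\| \le 2\wp$ for all $0 \le k \le k_m^*$ once $m$ is large (on the high-probability event where Assumption~\ref{assump:empirical} and the defining properties of $\Gamma_m$ hold); since $\|u^\dagger - u_0\| \le \wp$ by Lemma~\ref{lemma: monotonicity for exact data}, this yields $u_k^{(m)} \in \mathcal{B}_{3\wp}(u_0)$. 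I would carry this out by a continuation (strong induction) argument on $k$: assuming $u_j^{(m)} \in \mathcal{B}_{2\wp}(u^\dagger) \subset \mathcal{B}_{3\wp}(u_0^{(m)})$ for all $j \le k < k_m^*$ keeps every iterate in the region where Assumption~\ref{assump:Fi} holds, so the one-step estimates from the proofs of Lemma~\ref{Lemma: Monotonicity} and Lemma~\ref{lemmma:heuristic_first} are valid along the way.

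Writing $g_k^{(m)} := u_k^{(m)} - u^\dagger$, $r_k := \|\F(u_k^{(m)}) - \hat v^{(m)}\|$ and $\delta_m := \|\hat v^{(m)} - v\|$, the first step reproduces the per-iteration bound, but now keeping the noise term $\delta_m$ explicit rather than eliminating it through the stopping condition. Combining \eqref{eqn:xk_dk} and \eqref{norm_dk} via \eqref{split_eqn_heuristic}, and using $\zeta \le \alpha_k^{(m)} \le \zeta_1$ on the negative and the cross term respectively, gives for $B := 2(1+\eta)\zeta_1$ and a constant $A>0$ (whose positivity is exactly the content of \eqref{assum: On C}, after a mild strengthening of the smallness requirement on $\zeta$ to account for the enlarged radius)
\[
\|g_{k+1}^{(m)}\|^2 - \|g_k^{(m)}\|^2 \;\le\; -A\,r_k^2 + B\,\delta_m\,r_k \;\le\; \frac{B^2}{4A}\,\delta_m^2 \;=:\; C'\,\delta_m^2 ,
\]
where the last inequality merely maximizes the concave quadratic in $r_k$. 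Summing from $0$ to $k$ and using $\|g_0^{(m)}\| \le \wp$ (Assumption~\ref{assump:Fi}(A5)) then yields the clean estimate $\|g_{k+1}^{(m)}\|^2 \le \wp^2 + k_m^*\,C'\,\delta_m^2$.

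It remains to show that the accumulated noise $k_m^*\,C'\,\delta_m^2$ is eventually at most $3\wp^2$, which forces $\|g_{k+1}^{(m)}\|^2 \le 4\wp^2$ and closes the induction. Here I would invoke Assumption~\ref{assump:empirical}, which gives $r_{k_m^*} \ge \kappa\,\delta_m$, so that
\[
\Omega(k_m^*, \hat v^{(m)}) = (k_m^* + \varrho)\,r_{k_m^*}^2 \;\ge\; k_m^*\,\kappa^2\,\delta_m^2 .
\]
Since Lemma~\ref{lemma:heuristic_secound} guarantees $\Omega(k_m^*, \hat v^{(m)}) \to 0$ as $m \to \infty$, we obtain $k_m^*\,\delta_m^2 \le \Omega(k_m^*,\hat v^{(m)})/\kappa^2 \to 0$, hence $k_m^*\,C'\,\delta_m^2 \le 3\wp^2$ for all sufficiently large $m$. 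This completes the induction step and therefore the proof.

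The main obstacle is precisely this accumulation of measurement noise over the (potentially long) range between $\hat k_m$ and $k_m^*$: the naive route of bounding $\delta_m \le \tfrac{\tau_m}{\mathcal H\sqrt m}z_m$ on $\Gamma_m$ and then summing produces a factor of order $k_m^*\,\tau_m^2 z_m^2/m$, whose divergent $\tau_m^2$ cannot be absorbed by the only available decay $k_m^*\,z_m^2/m \to 0$. The decisive idea I would stress is that pairing the lower bound $r_{k_m^*} \ge \kappa\,\delta_m$ from Assumption~\ref{assump:empirical} with the decay $\Omega(k_m^*,\hat v^{(m)}) \to 0$ controls the product $k_m^*\,\delta_m^2$ \emph{directly}, thereby bypassing the $\tau_m^2$ blow-up entirely. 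A secondary, purely bookkeeping, point is that the constants in the one-step estimate should be taken with respect to the radius $2\wp$ so that they remain valid throughout $\mathcal{B}_{2\wp}(u^\dagger)$; this only tightens the standing lower bound on $\zeta$ and leaves the structure of the argument unchanged.
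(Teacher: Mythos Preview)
Your argument is correct and structurally identical to the paper's: both proceed by induction on $k$, derive the same one-step inequality
\[
\|g_{k+1}^{(m)}\|^2 - \|g_k^{(m)}\|^2 \le -A\,r_k^2 + \mathcal C_1\,\delta_m^2
\]
(you obtain it by maximizing the concave quadratic $-Ar^2 + B\delta_m r$, the paper by Young's inequality with parameter~$\mathcal H$, which are equivalent moves), sum from $0$ to $k$, and then argue that the accumulated noise term $k_m^*\,\delta_m^2$ is eventually bounded by $3\wp^2$.

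The genuine difference lies in how this last bound is established. The paper invokes the event $\Gamma_m$ to write $\delta_m^2 \le \tau_m^2 z_m^2/(\mathcal H^2 m)$ and then appeals to Lemma~\ref{lemma:heuristic_secound} for $k_m^* z_m^2/m \to 0$; this leaves a factor $\tau_m^2$ in front, which is exactly the obstruction you identify. Your route instead combines Assumption~\ref{assump:empirical} with the decay $\Omega(k_m^*,\hat v^{(m)}) \to 0$ to get $k_m^*\,\delta_m^2 \le \Omega(k_m^*,\hat v^{(m)})/\kappa^2 \to 0$ directly, thereby bypassing $\Gamma_m$ and $\tau_m$ altogether. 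This is cleaner and arguably more robust than the paper's version, since it works regardless of the growth of $\tau_m$. The only caveat is your ``mild strengthening'' of the lower bound on $\zeta$ to accommodate the radius $2\wp$ in the term $\beta_k^{(m)}\|g_k^{(m)}\|\|\Delta_{u_k^{(m)}}u_k^{(m)}\|$: strictly speaking this modifies the standing hypothesis $\mathcal C>0$, but the paper's own proof tacitly uses the same $\wp$ in \eqref{eqn:xk_dk} while running the induction inside $\mathcal B_{2\wp}(u^\dagger)$, so this bookkeeping point is shared rather than a defect of your approach.
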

\begin{proof}
    We proceed by induction. The claim is immediate for $k = 0$. 
Assume that $u_k^{(m)} \in \mathcal{B}_{3\wp}(u_0)$  for some $k < k_m^*$. Using \eqref{split_eqn_heuristic}, \eqref{eqn:xk_dk} and \eqref{norm_dk} we will show that $u_{k+1}^{(m)} \in \mathcal{B}_{3\wp}(u_0)$. By virtue of Young’s inequality, it follows that for any $\mathcal H> 0,$
\[ \|v - \hat v^{(m)}\| \|\F(u_k^{(m)}) - \hat v^{(m)}\| \leq \frac{1}{\mathcal H} \|\F(u_k^{(m)}) - \hat v^{(m)}\|^2 + \frac{\mathcal H}{4}  \|v - \hat v^{(m)}\|^2. \]
Thus \eqref{eqn:xk_dk} becomes 
\begin{align}\label{norm_dk_heuristic}
      - \langle g_k^{(m)}, h_k^{(m)} \rangle 
      & \leq - (1 - \eta)\alpha_k^{(m)}\|\F(u_k^{(m)}) - \hat v^{(m)}\|^2 + \wp\nu_0 \|\F(u_k^{(m)}) -\hat  v^{(m)}\|^2 \\ 
   \nonumber 
   & \quad + (1 + \eta)\alpha_k^{(m)}\left (\frac{\mathcal H}{4}\|v - \hat v^{(m)}\|^2 + \frac{1}{\mathcal H} \|\F(u_k^{(m)}) - \hat v^{(m)}\|^2 \right).
\end{align}
Combining \eqref{split_eqn_heuristic}, \eqref{norm_dk_heuristic} and \eqref{norm_dk} we have
\begin{align*}
\|g_{k+1}^{(m)}\|^2 &- \|g_k^{(m)}\|^2 \leq \frac{(1+\eta)\zeta_1 \mathcal H}{2} \|v - \hat{v}^{(m)}\|^2 \\
&  -2 \left[ \left( 1-\eta  - \frac{1 + \eta}{\mathcal H}\right)\zeta - \nu_0 (\wp + \nu_1) - \zeta_0^2 \right] \|\F(u_k^{(m)}) -\hat  v^{(m)}\|^2 \\
& \leq - 2 \mathcal C \|\F(u_k^{(m)}) -\hat  v^{(m)}\|^2 + \mathcal C_1 \|v - \hat{v}^{(m)}\|^2
\end{align*}
where $ \mathcal C_1 =\frac{(1+\eta)\zeta_1 \mathcal H}{2}$ and $ \mathcal C$ is the same positive constant as in \eqref{assum: On C}. Therefore on $\Gamma_m$, we have
\begin{align*}
    \|g_{k+1}^{(m)}\|^2 + 2 \mathcal C \sum_{n=0}^{k}\|\F(u_n^{(m)}) -\hat  v^{(m)}\|^2  & \leq  \|g_0\|^2 +  \mathcal C_1 \frac{(k+1)\tau_m^2 z_m^2}{\mathcal H^2m} \\
    & \leq \wp^2 + \left(\frac{\mathcal C_1\tau_m^2}{\mathcal H^2}\right) \frac{k_m^* z_m^2}{m}.
\end{align*}
By Lemma~\ref{lemma:heuristic_secound} we can guarantee that
\(\left(\frac{\mathcal C_1\tau_m^2}{\mathcal H^2}\right) \frac{k_m^* z_m^2}{m} \leq 3\wp^2\)
 for sufficiently large $m$. Thus, we have
\(\|u_{k+1}^{(m)} - u^\dagger\|^2 \leq 4\wp^2.\)
 Since \(\|u_0 - u^\dagger\| \leq \wp,\) we therefore have $u_{k+1}^{(m)} \in \mathcal{B}_{3\wp}(u_0)$.   
\end{proof}
We are now in a position to present the main result of this section concerning \Cref{alg:heuristic}. The proof relies on the preceding lemma together with the stability result stated in Lemma~\ref{lemma: stability} and \Cref{Convergence for exact data}.
\begin{theorem}
    Suppose that $\mathcal{C} > 0$, as specified in \eqref{assum: On C}, and that all the assumptions of Lemma~\ref{Lemma: Monotonicity} are satisfied. Let $k_m^*$ denote the stopping index determined according to \cref{rule:heuristic_discrepancy} for \Cref{alg:heuristic}, then a solution $u^\dagger$ of \eqref{Model eqn} exists such that 
    \[ \lim_{m \to \infty} \mathbb{E}\left[\|u_{k^*_m}^{(m)} - u^\dagger\|^2 \right] = 0.\]
\end{theorem}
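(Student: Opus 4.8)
The plan is to mirror the two-case structure used in the proof of Theorem~\ref{thm: non noisy}, replacing the statistical stopping properties there by their heuristic counterparts from Lemma~\ref{lemma:heuristic_secound} and Lemma~\ref{eqn:lemma_third}. Since it suffices to prove that every subsequence admits a further subsequence along which the claimed limit holds, I would fix an arbitrary sequence $m_r \to \infty$ with $\tau_{m_r}/\sqrt{m_r} \to 0$, write $k_r := k_{m_r}^*$, and pass to a subsequence so that the integer sequence $(k_r)$ converges either to a finite limit $K$ or to $+\infty$. Throughout, $u^\dagger$ is the limit of the exact-data iterates furnished by Theorem~\ref{Convergence for exact data}.

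In the first case ($k_r \to K$ finite, hence $k_r = K$ for large $r$), I would first show $\F(u_K)=v$. Writing $\|\F(u_K)-v\| \le \|\F(u_K)-\F(u_K^{(m_r)})\| + \|\F(u_K^{(m_r)}) - \hat v^{(m_r)}\| + \|\hat v^{(m_r)}-v\|$, taking expectations, and using that $\F$ is $B$-Lipschitz on the ball by (A2), Lemma~\ref{lemma: stability}, the bound $\mathbb{E}[\|\hat v^{(m_r)}-v\|] \le \sigma/\sqrt{m_r}$, and the conclusion $\|\F(u_{k_m^*}^{(m)}) - \hat v^{(m)}\| \to 0$ of Lemma~\ref{lemma:heuristic_secound}, all three terms vanish, giving $\F(u_K)=v$. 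A vanishing residual forces $\beta_K=0$ by the definition in \eqref{beta}, so the exact iteration is stationary from index $K$ onward and therefore $u_K=u^\dagger$. Lemma~\ref{lemma: stability} then yields $\mathbb{E}[\|u_{k_r}^{(m_r)}-u^\dagger\|^2]=\mathbb{E}[\|u_K^{(m_r)}-u_K\|^2]\to 0$.

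The second case ($k_r \to \infty$) carries the main work. I would split over the event $\Gamma_{m_r}$ and its complement,
\[
\mathbb{E}\!\left[\|u_{k_r}^{(m_r)}-u^\dagger\|^2\right] = \mathbb{E}\!\left[\|u_{k_r}^{(m_r)}-u^\dagger\|^2\chi_{\Gamma_{m_r}}\right] + \mathbb{E}\!\left[\|u_{k_r}^{(m_r)}-u^\dagger\|^2\chi_{\Gamma_{m_r}^c}\right].
\]
For the complement, Lemma~\ref{eqn:lemma_third} guarantees $u_k^{(m)}\in\mathcal B_{3\wp}(u_0)$ for all $k\le k_m^*$ and large $m$, so $\|u_{k_r}^{(m_r)}-u^\dagger\|^2$ is bounded by a deterministic constant, and together with $\mathbb{P}(\Gamma_{m_r}^c)\to 0$ this term vanishes. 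For the term on $\Gamma_{m_r}$, I would fix a large index $k$ with $\|u_k-u^\dagger\|$ small (exact convergence) and sum the almost-monotonicity recursion derived in the proof of Lemma~\ref{eqn:lemma_third}, namely $\|g_{n+1}^{(m)}\|^2-\|g_n^{(m)}\|^2 \le -2\mathcal C\|\F(u_n^{(m)})-\hat v^{(m)}\|^2 + \mathcal C_1\|v-\hat v^{(m)}\|^2$, from $n=k$ to $n=k_r-1$ (valid since $k<k_r$ for large $r$ and all iterates remain in $\mathcal B_{3\wp}(u_0)$). On $\Gamma_{m_r}$ this gives $\|u_{k_r}^{(m_r)}-u^\dagger\|^2 \le \|u_k^{(m_r)}-u^\dagger\|^2 + \tfrac{\mathcal C_1\tau_{m_r}^2}{\mathcal H^2}\tfrac{k_r z_{m_r}^2}{m_r}$; taking expectations, the first contribution is controlled by stability (Lemma~\ref{lemma: stability}) plus exact convergence, while the accumulated perturbation vanishes by the conclusion $k_m^* \mathbb{E}[z_m^2/m]\to 0$ of Lemma~\ref{lemma:heuristic_secound}, as already exploited in Lemma~\ref{eqn:lemma_third}. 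Combining the two bounds yields the $\varepsilon$-smallness and closes the subsequence argument.

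The hard part is precisely the accumulated-error control in the second case. In the statistical setting, Lemma~\ref{Lemma: Monotonicity} supplies exact monotonicity of $\|u_k^{(m)}-u^\dagger\|$ up to the stopping index, so the comparison with a fixed iterate $u_k^{(m)}$ is immediate. Under the heuristic rule, however, $k_m^*$ need not keep the residual above the statistical threshold, so only the almost-monotone recursion carrying the perturbation $\mathcal C_1\|v-\hat v^{(m)}\|^2$ is available; summing it over the growing number $k_r\to\infty$ of iterations produces a term of order $\tau_{m_r}^2 k_r z_{m_r}^2/m_r$, and establishing that this vanishes uniformly on $\Gamma_{m_r}$ — which is exactly what Lemma~\ref{lemma:heuristic_secound} and Lemma~\ref{eqn:lemma_third} are designed to deliver — is the crux of the proof.
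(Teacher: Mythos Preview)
Your proposal is correct and follows essentially the same approach as the paper's own proof: the same subsequence two-case split, with Lemma~\ref{lemma:heuristic_secound} supplying the residual convergence in Case~I and the almost-monotone recursion from the proof of Lemma~\ref{eqn:lemma_third} summed from a fixed $k$ up to $k_r-1$ in Case~II, followed by the $\Gamma_{m_r}$-split as in Theorem~\ref{thm: non noisy}. One minor bookkeeping point: on $\Gamma_{m_r}$ you bound the accumulated perturbation by $\tfrac{\mathcal C_1\tau_{m_r}^2}{\mathcal H^2}\tfrac{k_r z_{m_r}^2}{m_r}$, which carries a spurious diverging factor $\tau_{m_r}^2$ that Lemma~\ref{lemma:heuristic_secound} does not kill; the paper instead keeps the raw term $\mathcal C_1 k_r^*\|v-\hat v^{(m_r)}\|^2$, which vanishes directly since Assumption~\ref{assump:empirical} together with Lemma~\ref{lemma:heuristic_secound} gives $k_r^*\|v-\hat v^{(m_r)}\|^2\le\kappa^{-2}\,\Omega(k_r^*,\hat v^{(m_r)})\to 0$.
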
 
\begin{proof}
Consider a sequence $\{m_r\}$ with $m_r \to \infty$ as $r \to \infty$, and let $\hat v^{(m_r)}$ denote the corresponding measurement data. For each pair $(m_r, \hat v^{(m_r)})$, let the stopping index defined by \cref{rule:heuristic_discrepancy} be denoted by $k_r^* := k^*_{m_r}$.  

Let $u^\dagger$ represents the solution of \eqref{Model eqn} obtained in Theorem~\ref{Convergence for exact data}, satisfying $\|u_k - u^\dagger\| \to 0$ as $k \to \infty$, where $\{u_k\}_{k \geq 0}$ are the iterates generated by the exact-data version of \Cref{alg:heuristic}. To establish the desired result, we distinguish between two cases:  

\medskip
\noindent\textit{Case-I.} Suppose that $k_r^* \to K$ for some finite integer $K > 0$ as $r \to \infty$. In this case, the proof proceeds in the same manner as the corresponding case in \Cref{thm: non noisy}. Indeed, the argument requires only that  
\[
\|\mathcal{F}(u_K^{(m_r)}) - \hat v^{(m_r)}\| \to 0 \quad \text{as } r \to \infty,
\]
which is guaranteed by Lemma~\ref{lemma:heuristic_secound}.  

\medskip
\noindent\textit{Case-II.} Suppose instead that $k_r^* \to \infty$. Then, for any fixed integer $k \geq 1$, we have $k_r^* > k$ for sufficiently large $r$. From the proof of Lemma~\ref{eqn:lemma_third}, it follows that  
\[
\|u_{k+1}^{(m_r)} - u^\dagger\|^2 - \|u_k^{(m_r)} - u^\dagger\|^2 \leq \mathcal{C}_1 \|v - \hat v^{(m_r)}\|^2,
\]
for all $0 \leq k < k_r^*$. Summing this inequality from $k$ up to $k_r^* - 1$ yields
\[
\|u_{k_r^*}^{(m_r)} - u^\dagger\|^2 
\leq \|u_k^{(m_r)} - u^\dagger\|^2 + \mathcal{C}_1 (k_r^* - k + 1) \|v - \hat v^{(m_r)}\|^2,
\]
and, in particular,
\[
\|u_{k_r^*}^{(m_r)} - u^\dagger\|^2 
\leq \|u_k^{(m_r)} - u^\dagger\|^2 + \mathcal{C}_1 k_r^* \|v - \hat v^{(m_r)}\|^2.
\]
The subsequent steps of the proof follows analogously to the similar case in the proof of \Cref{thm: non noisy}.  
\end{proof}
 \section{Numerical results and performance analysis}\label{sec: numerical}
This section details a series of numerical experiments designed to evaluate the effectiveness of the proposed algorithms, $\text{\Cref{alg:buildtree}}$ and $\text{\Cref{alg:heuristic}}$. Our analysis is particularly directed toward applications involving both linear and nonlinear ill-posed tomography problems.
To implement the proposed method effectively, selecting appropriate initial reconstructors $\Psi_\theta$ is critical. In our experiments, we adopt three commonly used reconstruction methods: FBP, Tikhonov, and TV, whose detailed descriptions can be found in \cite[Section 4.2]{bajpai2025convergence}. The discrepancy principle and generalized cross-validation are used to choose the regularization parameters for the TV and Tikhonov methods, respectively. The Hann filter is employed to perform the FBP reconstruction.

To simulate realistic measurement conditions, independent and identically distributed (i.i.d.) Gaussian noise is added to the exact data \( v \). Specifically, the \( i \)-th noisy measurement \( v_i \) is modeled as  
\[
v_i = v + \epsilon_i \frac{\xi}{\|\xi\|},
\]
where \( \epsilon_i \) denotes an independent Gaussian noise scalar, and \( \xi \) is a random vector drawn from the standard normal distribution.

\noindent The quantitative evaluation of the reconstructed images is performed using three standard image quality metrics: the relative  error (RRE), the \textit{Peak Signal-to-Noise Ratio (PSNR)}, and the \textit{Structural Similarity Index (SSIM)}~\cite{wang2004image}. Since the pixel intensities are normalized within the range \([0, 1]\), the PSNR is defined as  
\[
\text{PSNR} := 20 \log_{10} \left( \frac{1}{\|u^\dagger - u_{k_m}^{(m)}\|} \right),
\]
while the relative error is given by  
\[
\text{RRE} := \frac{\|u_{k_m}^{(m)} - u^\dagger\|}{\|u^\dagger\|},
\]
where, \(u_{k_m}^{(m)}\) denotes the reconstructed image obtained after \(k_m\) iterations of \Cref{alg:buildtree}, while  \(u^\dagger\) represents the ground truth image. When employing \Cref{alg:heuristic}, we instead denote the reconstructed image by \(u_{k_m^*}^{(m)}\) to reflect the use of the heuristic stopping criterion.

\subsection{X-ray Computed Tomography}
Computed Tomography (CT) is a vital medical imaging modality, used for diagnosing internal conditions like tumors, injuries, and fractures. The system operates by rotating an X-ray source around the subject, capturing the differential attenuation of beams across tissues via a detector array. The aggregated angular measurements constitute a sinogram. Mathematically, this process is modeled as a discretized linear system
\begin{equation}\label{CT1}
    \F_cu = \tilde v,
\end{equation}
where the matrix $\F_c \in \mathbb{R}^{m \times n}$ represents the discretized Radon transform, $u \in \mathbb{R}^n$ is the  two-dimensional  image vector (unknown), and $\tilde v \in \mathbb{R}^m$ is the measured, noisy sinogram. For a detailed treatment, see \cite{natterer2001mathematics}. The vectorization of an \(E \times F\) two-dimensional image, whose pixels are indexed by \(a = (i,j)\), is performed lexicographically using either a row-major or column-major ordering. This process maps the image to a vector in \(\mathbb{R}^{n}\), where  \(n = E \cdot F\).

\noindent In our tests, we incorporate the Operator Discretization Library (ODL) \cite{jonasadler20181442734} to describe the image domain. Specifically, the function \texttt{odl.uniform\_discr} is used to create a uniformly discretized $E \times F$ square grid over the region $[-128, 128]^2 \subset \mathbb{R}^2$.       
The Shepp--Logan phantom from \texttt{skimage.data} serves as the test image for the numerical simulations. The true image and the corresponding noisy sinograms, corrupted with Gaussian noise 
\(\epsilon_i \sim \mathcal{N}(0, \varepsilon^2)\) where \(\varepsilon = 10\), are presented in Fig.~\ref{fig:CT_true}.
\begin{figure}
    \centering
    \includegraphics[width=0.98\linewidth]{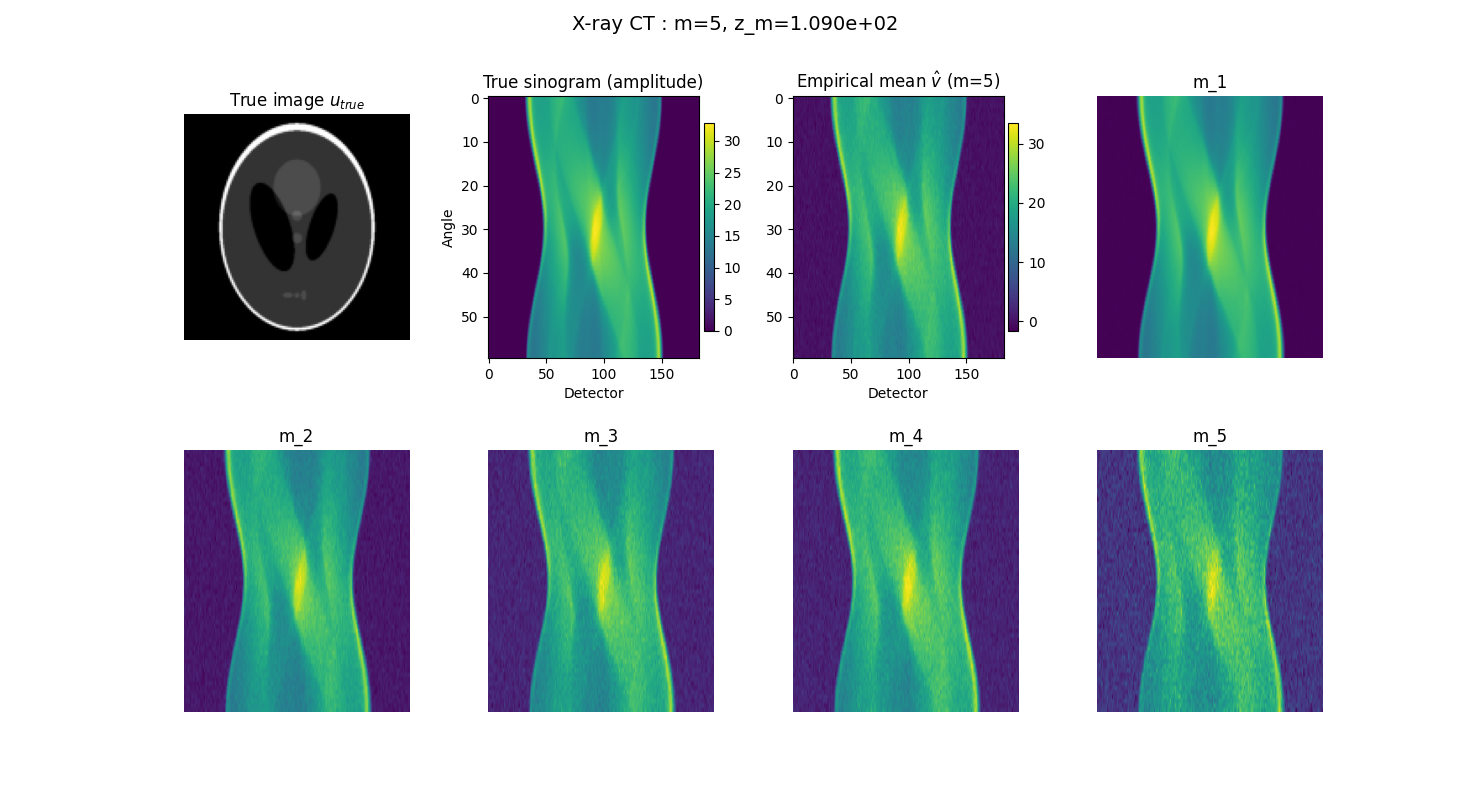}
    \caption{Exact solution for X-ray CT with 5 measured noisy sinograms and their empirical mean}
    \label{fig:CT_true}
\end{figure}
 Data acquisition simulates a parallel-beam CT geometry using \texttt{odl.tomo.parallel\_beam\_geometry} with $m_\theta = 60$ uniformly distributed projection angles over $[0, 2\pi)$.
The forward operator $\F_c$ is implemented via \texttt{odl.tomo.RayTransform}. For a $128 \times 128$ image ($n=16,384$ pixels), ODL sets $d = \lceil \sqrt{2} \cdot 128 \rceil = 363$ detector elements, resulting in $m = 60 \cdot 363 = 21,780$ measurements. This overdetermined system ($m > n$) constitutes an ill-conditioned linear system.
The parameters for~\eqref{eqn: edge weight fun} are empirically set to $R = 6$ and $\sigma = 0.05$. The step sizes $\alpha_k^\delta$ and $\beta_k^\delta$ are chosen according to \eqref{alpha} and \eqref{beta} with $\zeta_0 =0.2$, $\zeta_1 = 0.5$, $\nu_0 = \nu_1 = 0.05$, and $\tau_m = 2m^{0.5}$. For \cref{rule:heuristic_discrepancy}, we select $\varrho =100$.

\noindent First we generate the initial solution using FBP, Tik and TV. The initial solutions for the case $m=10$ are shown in Fig.~\ref{fig:initial}. Then we apply \texttt{E-IRMGL+$\Psi$}~\eqref{main iterative scheme} using both the stopping strategies and there reconstruction results are plotted in Fig.~\ref{fig:CT_m5_and_m10_rec} and Fig.~\ref{fig:CT_m50_and_m100_rec} for various cases of number of measurements $m$. 
\begin{figure}
    \centering
    \includegraphics[width=0.8\linewidth]{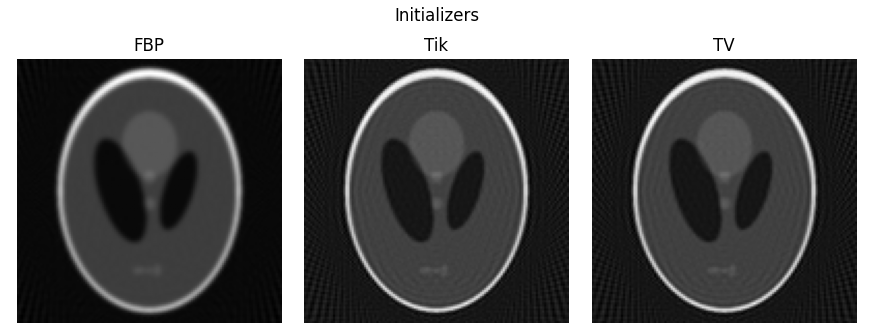}
    \caption{Reconstructed solutions of X-ray CT by using initializers with $m=10$}
    \label{fig:initial}
\end{figure}
\begin{figure}[htbp]
    \centering
    \begin{subfigure}[t]{0.48\textwidth}
        \centering
        \includegraphics[width=\textwidth]{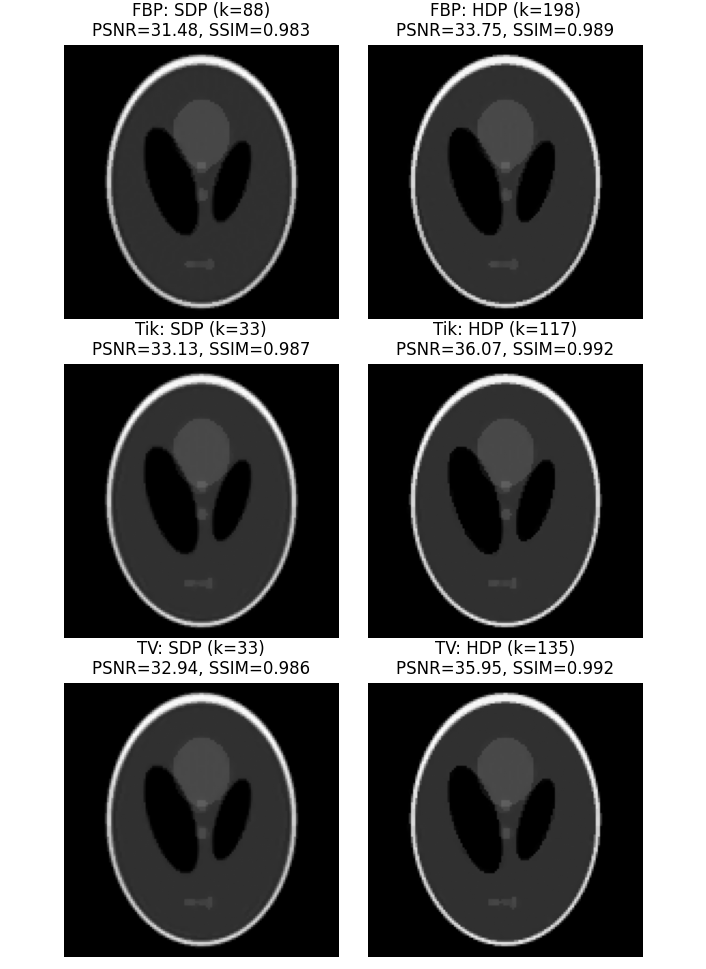}
        \caption{$m=5$}
    \end{subfigure}
    \hspace{-0.7cm}
    \begin{subfigure}[t]{0.471\textwidth}
        \centering
        \includegraphics[width=\textwidth]{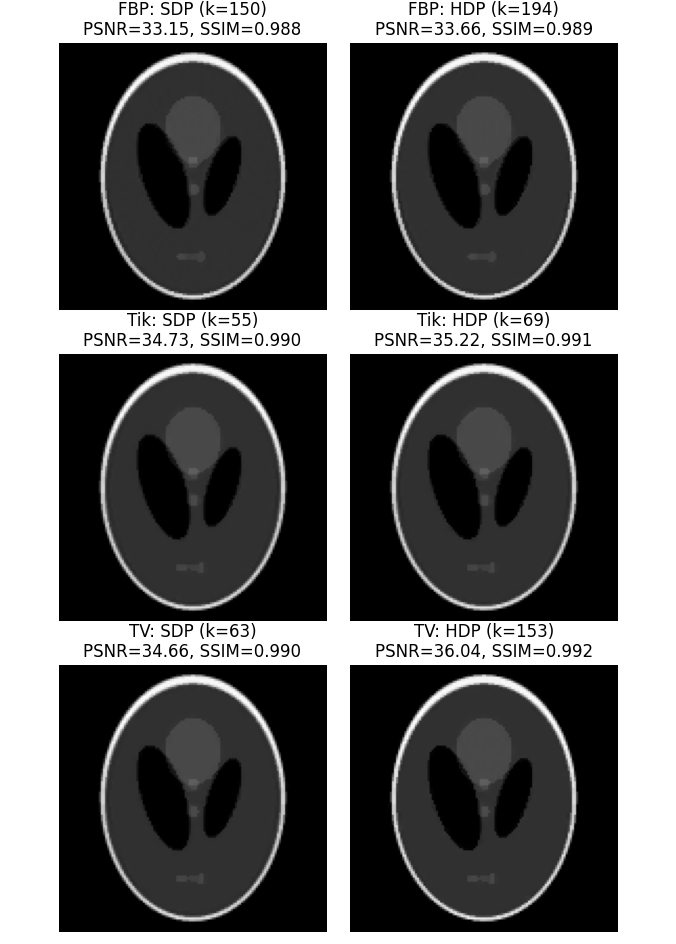}
        \caption{$m=10$}
    \end{subfigure}
    \caption{Reconstructed solutions of X-ray CT with $m=5$ and $m=10$}
 \label{fig:CT_m5_and_m10_rec}
  \end{figure}
\begin{figure}[htbp]
    \centering
    \begin{subfigure}[t]{0.458\textwidth}
        \centering
        \includegraphics[width=\textwidth]{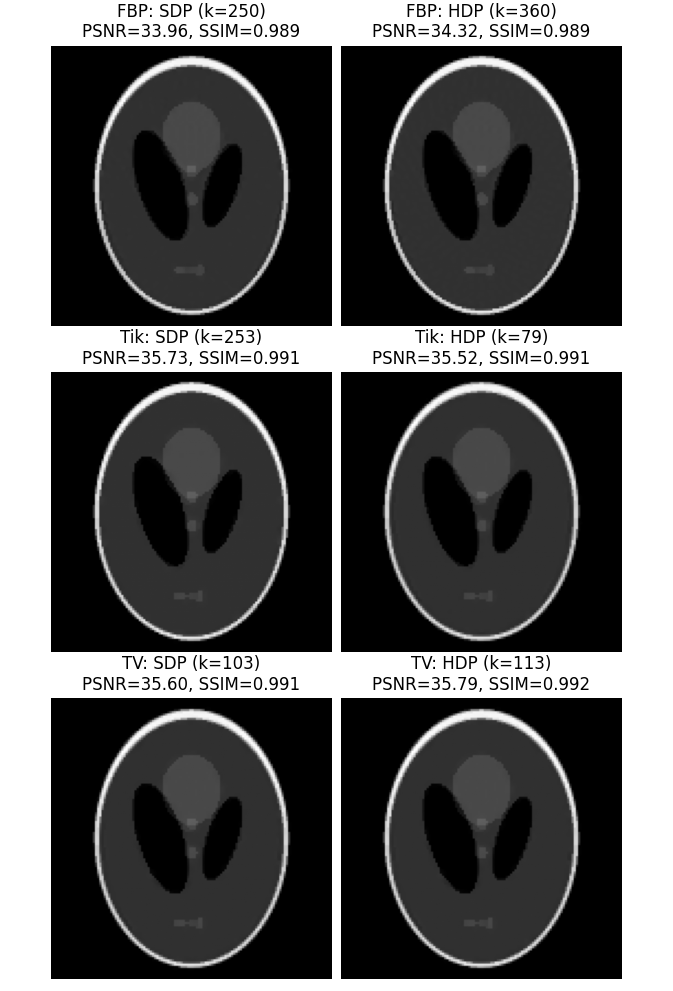}
        \caption{$m=50$}
    \end{subfigure}
    \hspace{-0.7cm}
    \begin{subfigure}[t]{0.48\textwidth}
        \centering
        \includegraphics[width=\textwidth]{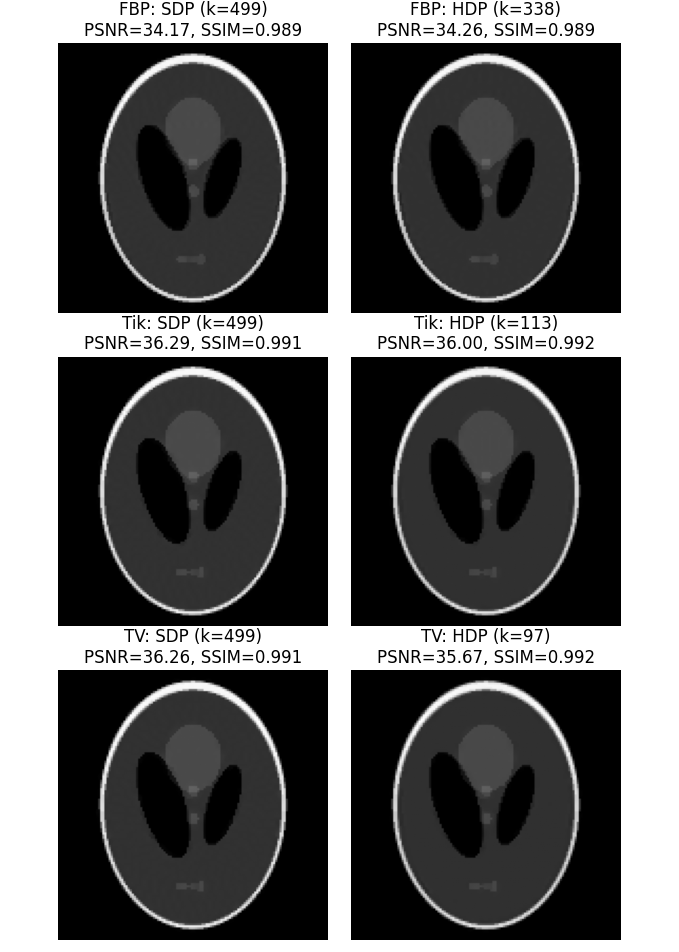}
        \caption{$m=100$}
    \end{subfigure}
        \caption{Reconstructed solutions of X-ray CT with $m=50$ and $m=100$}
  \label{fig:CT_m50_and_m100_rec}
  \end{figure}
 A detailed comparison of the reconstruction quality and the corresponding number of iterations is provided in \cref{tab:two_alg_comparison}. The results indicate that  \cref{rule:statistical_discrepancy} performs better for a smaller number of measurement points (e.g., \(m = 5\) and \(m = 10\)). However, as the number of measurements increases, the threshold value in \cref{rule:statistical_discrepancy} decreases, leading to a substantially larger number of iterations. In contrast, the stopping index determined by \cref{rule:heuristic_discrepancy} is reached more quickly, owing to the oscillatory nature of the convergence behavior after several iterations. For the cases \(m = 5\) and \(m = 10\), the maximum number of iterations was set to \(200\), while for \(m = 50\) and \(m = 100\) it was increased to \(500\). These choices were guided by empirical observations and aimed at balancing computational efficiency with memory requirements. 
\begin{table}[htb]
    \centering
    \caption{Numerical  comparison of both algorithms for X-ray CT by using different values of $m$.}
    \label{tab:two_alg_comparison}
    \renewcommand{\arraystretch}{1.15}
    \setlength{\tabcolsep}{6pt}
    \begin{tabular}{c l | r r r r | r r r r}
        \toprule
        \multirow{2}{*}{$\boldsymbol{m}$} & \multirow{2}{*}{\textbf{$\Psi$}} 
            & \multicolumn{4}{c|}{\textbf{\Cref{alg:buildtree}}} 
            & \multicolumn{4}{c}{\textbf{\Cref{alg:heuristic}}} \\
        & & \textbf{Iter.} & \textbf{RRE} & \textbf{PSNR} & \textbf{SSIM} 
          & \textbf{Iter.} & \textbf{RRE} & \textbf{PSNR} & \textbf{SSIM} \\
        \midrule

        \multirow{3}{*}{5}
            & \texttt{FBP}  & 88  &  0.114 & 31.48  & 0.9831  & 198 & 0.088 & 33.75  & 0.9889  \\
            & \texttt{Tik}  &33 & 0.099  & 33.13  & 0.9865  &   117 & 0.067 & 36.07  & 0.9916 \\
              & \texttt{TV}   & 33  & 0.097 & 32.94  & 0.9863  & 135 & 0.068  & 35.95  & 0.9917  \\
        \midrule

        \multirow{3}{*}{10}
            & \texttt{FBP}  & 150 &0.094 & 33.15  & 0.9879  & 194  & 0.089  & 33.66  & 0.9888  \\
            & \texttt{Tik}  & 55  & 0.078  & 34.73  & 0.9900  & 69  & 0.074   &  35.22& 0.9908  \\
            & \texttt{TV}   & 63  & 0.079  & 34.66  & 0.9901  & 153 & 0.068  & 36.04  & 0.9915 \\
        \midrule

        \multirow{3}{*}{50}
            & \texttt{FBP}  & 250  & 0.086  & 33.96  & 0.9891  & 360  & 34.32  & 0.9893  & 0.082  \\
            & \texttt{Tik}  & 253  & 0.071  & 35.73  & 0.9908  & 79  & 0.071  & 35.52  & 0.9912  \\
            & \texttt{TV}   & 103  & 0.071  & 35.60  & 0.9914  & 113  & 35.79  & 0.9916  & 0.069   \\
        \midrule

        \multirow{3}{*}{100}
            & \texttt{FBP}  & max  & 0.084 & 34.17  & 0.9888  & 338 & 0.083 & 34.26   & 0.9892  \\
            & \texttt{Tik}  & max  & 0.066 & 36.29  & 0.9914  & 113  & 0.068  & 36.00  & 0.9917  \\
            & \texttt{TV}   & max & 0.066  & 36.26  & 0.9911  & 97 & 0.070  & 35.67  & 0.9916   \\
        \bottomrule
    \end{tabular}
\end{table}
Other metrics like PSNR, SSIM and RRE are also plotted in Fig.~\ref{fig:m_5_met}, Fig.~\ref{fig:m_10}, Fig.~\ref{fig:m_50} and Fig.~\ref{fig:m_100} for each chosen value of $m.$ 
\begin{figure}[htbp]
    \centering
       \includegraphics[width=0.9\textwidth, height=0.4\textheight]{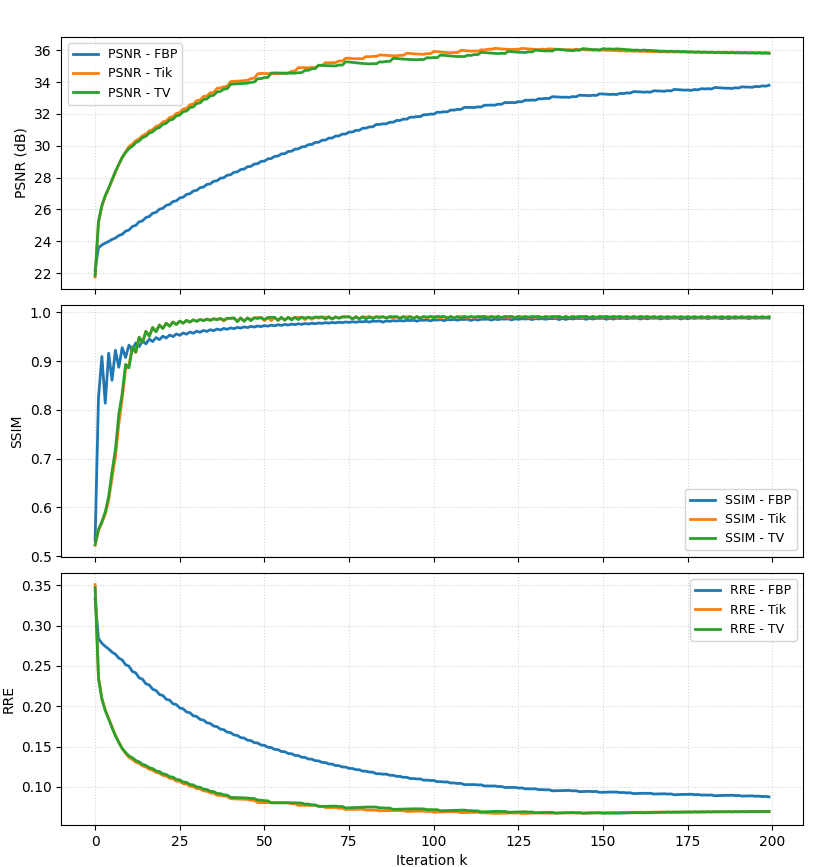} 
        \caption{PSNR, SSIM and RRE curves of X-ray CT for $m = 5$} 
        \label{fig:m_5_met}
    \end{figure}
\begin{figure}[htbp]
    \centering
        \includegraphics[width=0.9\textwidth, height=0.4\textheight]{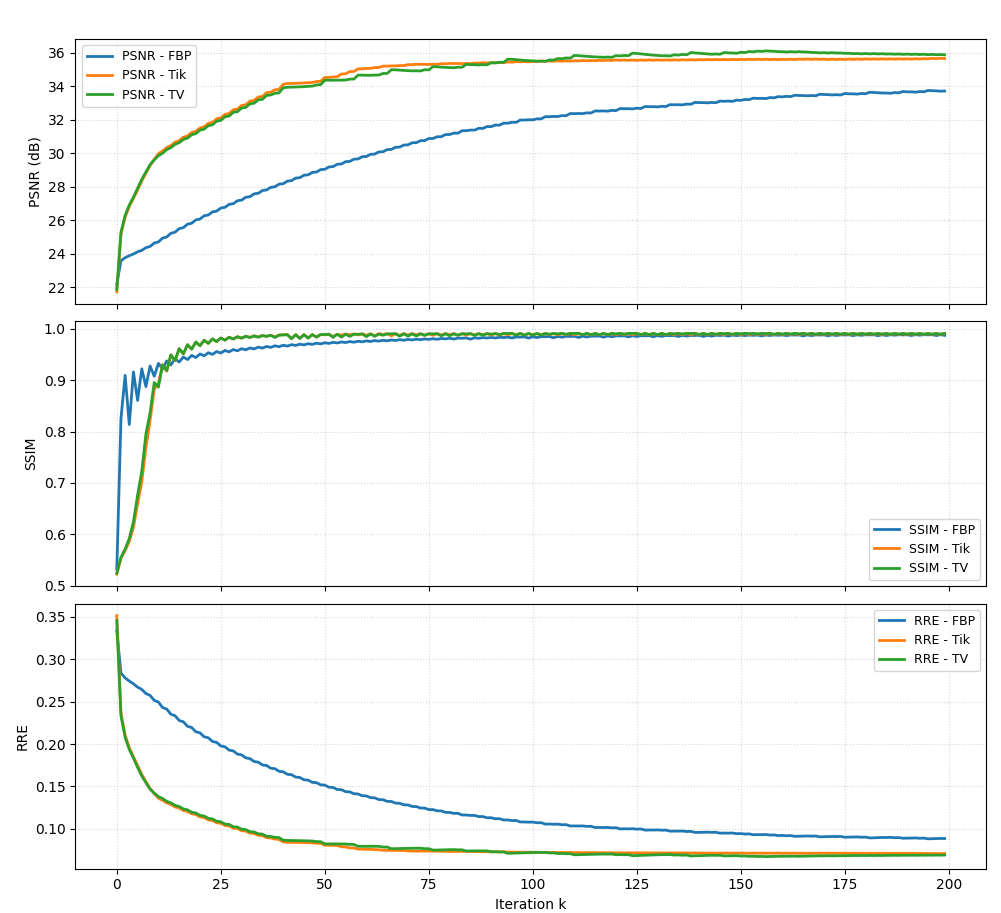} 
        \caption{PSNR, SSIM and RRE curves of X-ray CT for $m = 10$} 
        \label{fig:m_10}
    \end{figure}
\begin{figure}[htbp]
    \centering
      \includegraphics[width=0.9\textwidth, height=0.4\textheight]{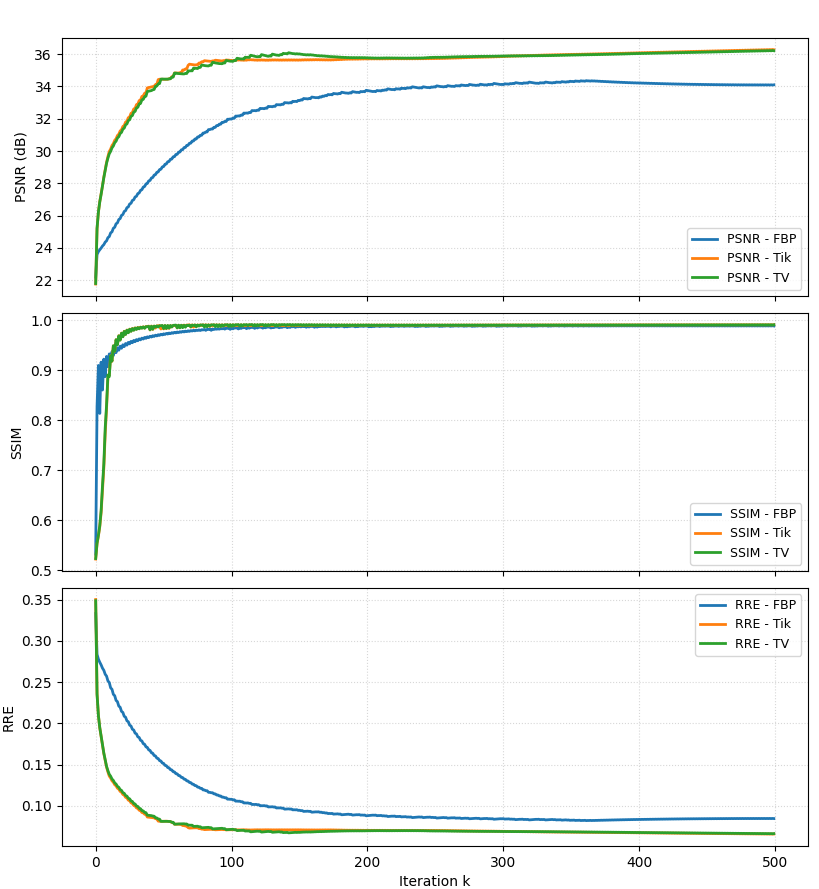} 
        \caption{PSNR, SSIM and RRE curves of X-ray CT for $m = 50$} 
        \label{fig:m_50}
    \end{figure}
\begin{figure}[htbp]
    \centering
            \includegraphics[width=0.9\textwidth, height=0.4\textheight]{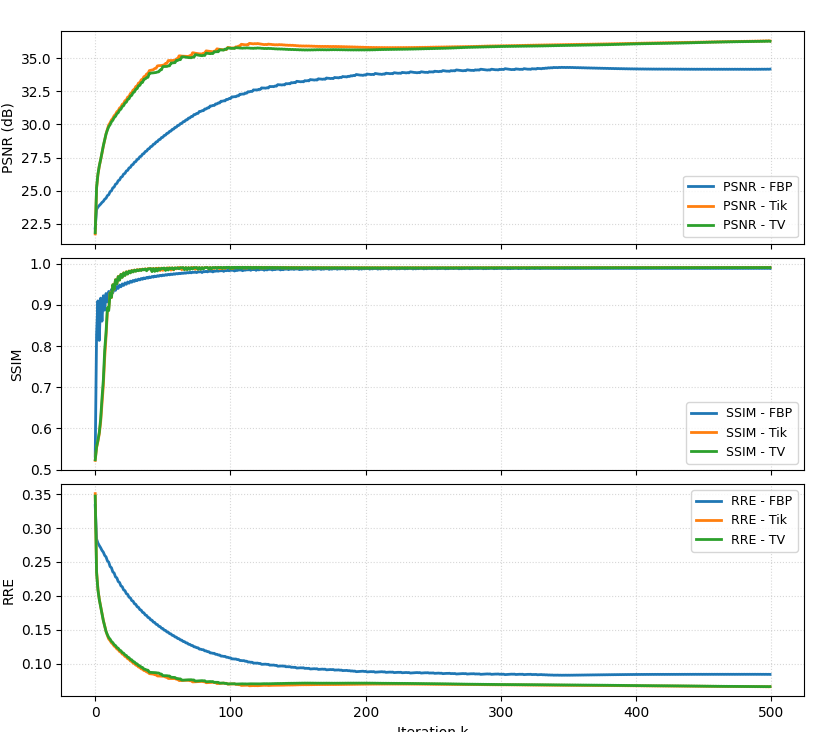} 
        \caption{PSNR, SSIM and RRE curves of X-ray CT for $m = 100$} 
        \label{fig:m_100}
    \end{figure}
\subsection{Phase retrieval CT} In this section, we consider the phase retrieval problem, which arises in numerous practical settings such as X-ray crystallography, microscopy, and related coherent imaging modalities, see \cite{fannjiang2020numerics} and the references therein. In conventional CT, detectors measure the attenuated X-ray intensity after logarithmic linearization, yielding a linear relation between the object and its projections, as described in \eqref{CT1}. This model presumes that both amplitude and phase information of the measured signal are accessible. In contrast, in coherent imaging techniques such as X-ray phase-contrast tomography, propagation-based imaging, and coherent diffraction tomography, the detector records only the intensity (magnitude squared) of the transmitted or scattered wave field. Consequently, the reconstruction task becomes inherently nonlinear and is formulated as a phase retrieval inverse problem.

\noindent
Mathematically, the phase retrieval problem in this context can be written as
\begin{equation}\label{eqn:model_PR}
\F_p(u)\coloneq |\mathcal{F}_c u|^2 = \tilde v,
\end{equation}
where $|\mathcal{F}_c u|^2$ denotes the element-wise squared magnitude of the projection data. Unlike the conventional linear CT model, this formulation is inherently nonlinear and nonconvex due to the loss of phase information. Consequently, the reconstruction of $u$ from intensity-only measurements is significantly more challenging, as multiple solutions may correspond to the same magnitude data.
To recover $u$, we employs \Cref{alg:buildtree} and \Cref{alg:heuristic}. The proposed schemes are designed to handle the ill-posedness and nonlinearity of the phase retrieval problem.

\noindent
Clearly $\mathcal F_p$ is continuous differentiable
and its Fréchet derivative 
$\F_p'(u): \mathbb{R}^n \to \mathbb{R}^m$
is the linear map
\[\F_p'(u)[h] = 2 \big( y \odot (\mathcal{F}_c h) \big) 
= 2 \big( \operatorname{diag}(y)\, \mathcal{F}_c \big) h,
\]
where $h \in \mathbb R^n, y = \mathcal{F}_c u$ and $\odot$ denotes the Hadamard (element wise) product.  
Thus, the Jacobian matrix is given by
\[
J(u) = \F_p'(u) = 2\, \operatorname{diag}(\mathcal{F}_c u)\, \mathcal{F}_c \in \mathbb{R}^{m \times n}.
\]
The adjoint of the derivative $\F_p'(u)$ is the map
$\F_p'(u)^*: \mathbb{R}^m \to \mathbb{R}^n$, 
given by
\[\F_p'(u)^*[w] 
= 2\, \mathcal{F}_c^{\top} \big( (\mathcal{F}_c u) \odot w \big),
\quad \text{for any } w \in \mathbb{R}^m.
\]
Equivalently, in matrix form,
\begin{equation}\label{eqn:adjoint_PR}
\F_p'(u)^* = 2\, \mathcal{F}_c^{\top} \operatorname{diag}(\mathcal{F}_c u).
\end{equation}
We use \eqref{eqn:model_PR} and \eqref{eqn:adjoint_PR} to apply \texttt{E-IRMGL+$\Psi$} method. The numerical simulations are conducted using the \texttt{binary\_blobs} available in the \texttt{skimage.data} library. The ground truth image and the corresponding noisy sinograms, contaminated with Gaussian noise \(\epsilon_i \sim \mathcal{N}(0, \varepsilon^2)\) with \(\varepsilon = 10\), are illustrated in Fig.~\ref{fig:PR_true}.
 We set \(\tau_m = 3m^{0.5}\) for \cref{rule:statistical_discrepancy}, while for \cref{rule:heuristic_discrepancy}, the parameter \(\varrho\) is chosen as \(1000\) and the maximum number of iteration is chosen to be $300$.
\begin{figure}
    \centering
    \includegraphics[width=0.98\linewidth]{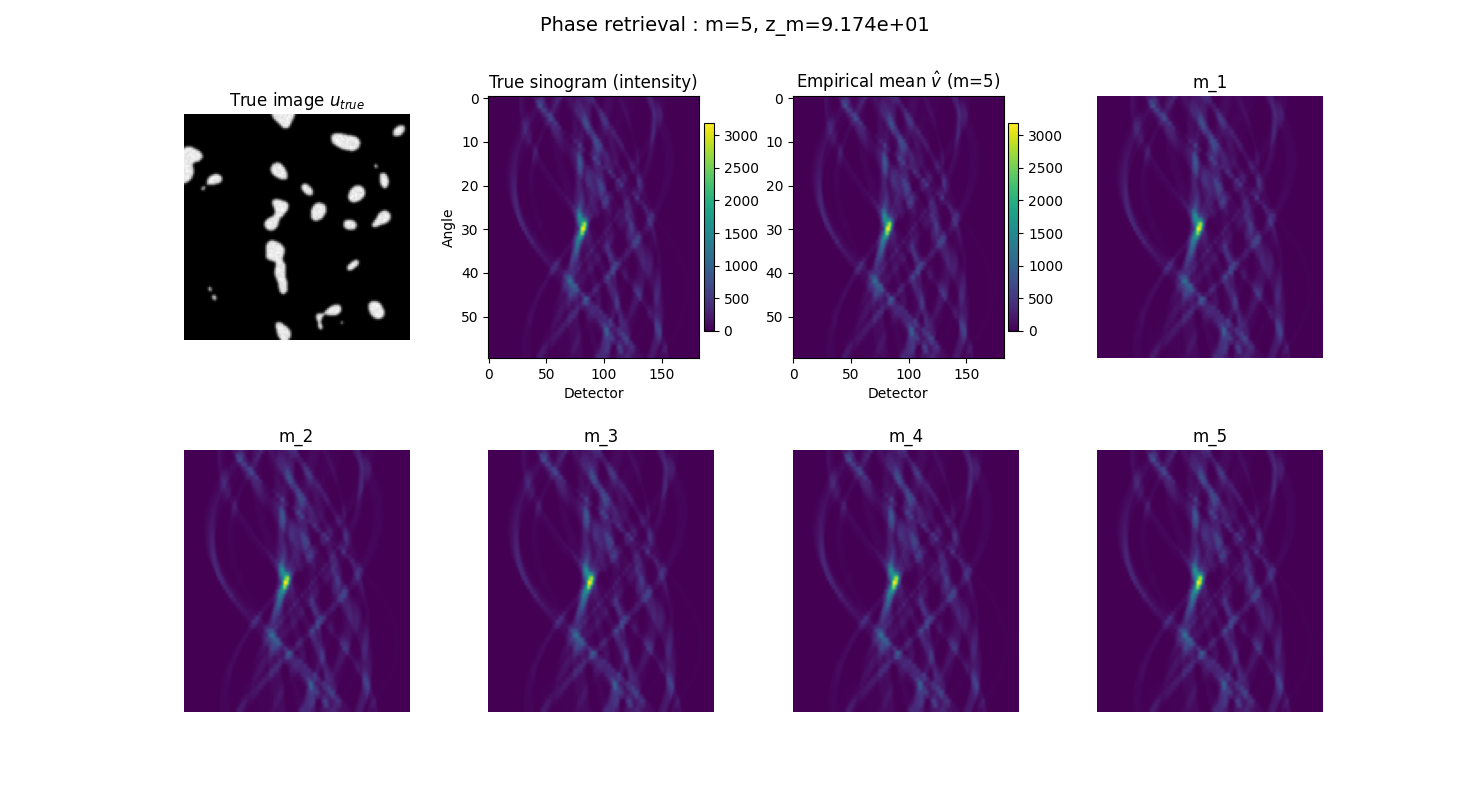}
    \caption{Exact solution for phase retrieval CT with 5 measured sinograms and their empirical mean}
    \label{fig:PR_true}
\end{figure}
\Cref{alg:buildtree} and \Cref{alg:heuristic} are  initialized using FBP, Tikhonov, and TV methods, with the results for \(m = 10\) displayed in Fig.~\ref{fig:PR_m_10_initilaizers}. Subsequently, the \texttt{E-IRMGL+$\Psi$} scheme \eqref{main iterative scheme} is applied with both stopping strategies, and the corresponding reconstructions for different measurement numbers \(m\) are shown in Fig.~\ref{fig:PR_rec}. In \cref{tab:two_alg_comparison_PR}, we summarizes the performance comparison of the two stopping strategies in the context of the phase retrieval problem.

\begin{figure}
    \centering
    \includegraphics[width=0.8\linewidth]{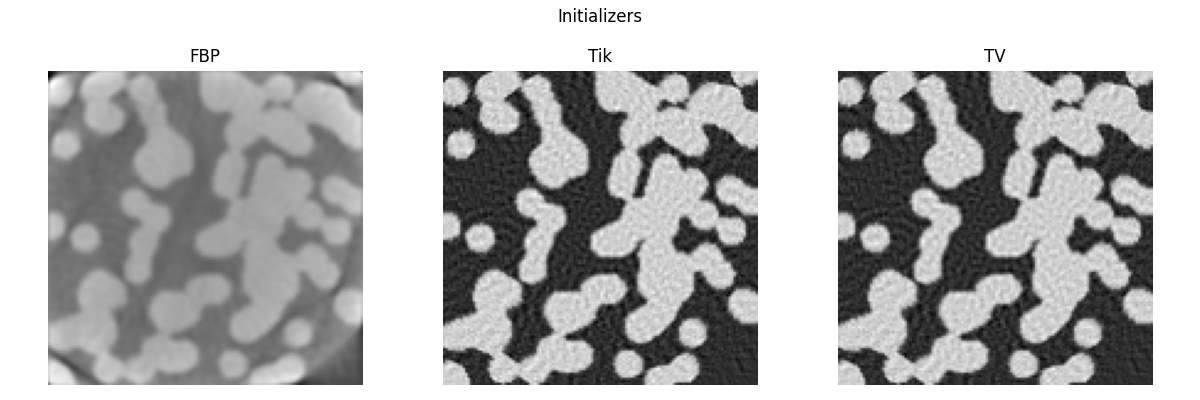}
    \caption{Reconstructed solutions of phase retrieval CT by using initializers with $m=10$}
    \label{fig:PR_m_10_initilaizers}
\end{figure}
\begin{table}[htb]
    \centering
    \caption{Numerical  comparison of both algorithms for phase retrieval CT by using different values of $m$.}
    \label{tab:two_alg_comparison_PR}
    \renewcommand{\arraystretch}{1.15}
    \setlength{\tabcolsep}{6pt}
    \begin{tabular}{c l | r r r r | r r r r}
        \toprule
        \multirow{2}{*}{$\boldsymbol{m}$} & \multirow{2}{*}{\textbf{$\Psi$}} 
            & \multicolumn{4}{c|}{\textbf{\Cref{alg:buildtree}}} 
            & \multicolumn{4}{c}{\textbf{\Cref{alg:heuristic}}} \\
        & & \textbf{Iter.} & \textbf{RRE} & \textbf{PSNR} & \textbf{SSIM} 
          & \textbf{Iter.} & \textbf{RRE} & \textbf{PSNR} & \textbf{SSIM} \\
        \midrule
        \multirow{3}{*}{5}
            & \texttt{FBP}  & 150  &  0.155 & 19.21  & 0.7669  & 292 & 0.235 & 20.40  & 0.7947  \\
            & \texttt{Tik}  &98 & 0.145  & 19.79  & 0.7636  &   274 & 0.124 & 21.08  & 0.8008 \\
              & \texttt{TV}   & 97  & 0.147 & 19.67  & 0.7605  & 277 & 0.125  & 21.07  & 0.8017  \\
        \midrule
        \multirow{3}{*}{10}
            & \texttt{FBP}  & 198 & 0.143 & 19.88  & 0.7786  & 286  & 0.133  & 20.54  & 0.7900  \\
            & \texttt{Tik}  & 152  & 0.135  & 20.43  & 0.7777  & 280  & 0.122   &  21.23& 0.7947  \\
            & \texttt{TV}   & 134  & 0.136  & 20.43  & 0.7771  & 295 & 0.122  & 21.27  & 0.7970 \\
        \bottomrule
    \end{tabular}
\end{table}
\begin{figure}[htbp]
    \centering
    \begin{subfigure}[t]{0.476\textwidth}
        \centering
        \includegraphics[width=\textwidth]{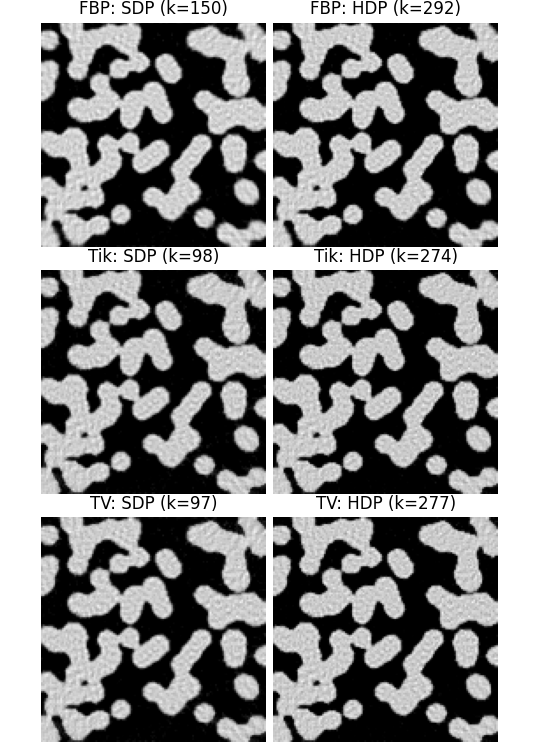}
        \caption{$m=5$}
    \end{subfigure}
    \hspace{-0.7cm}
    \begin{subfigure}[t]{0.48\textwidth}
        \centering
        \includegraphics[width=\textwidth]{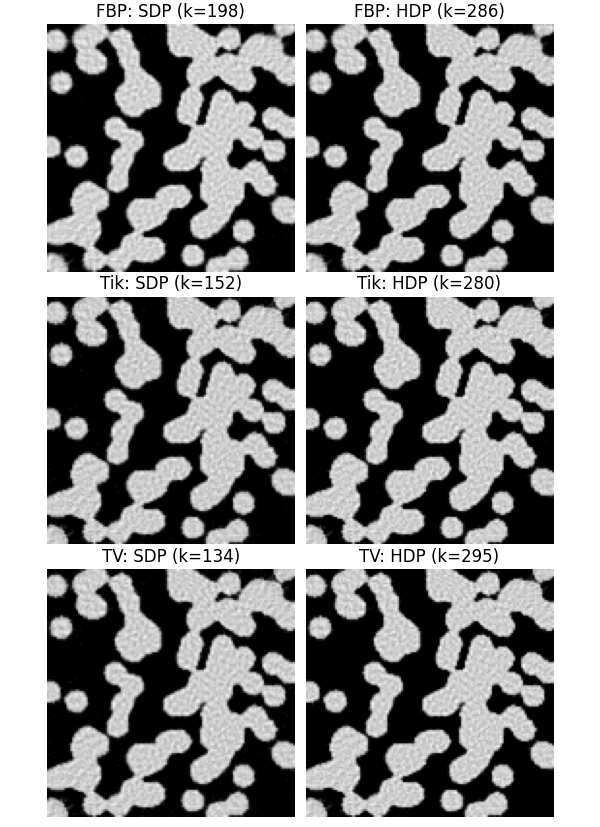}
        \caption{$m=10$}
    \end{subfigure}
        \caption{Reconstructed solutions of phase retrieval CT with $m=50$ and $m=100$}
        \label{fig:PR_rec}
  \end{figure}

\section{Concluding remarks}
\label{sec:conclusions}
\noindent This study introduces and analyzes a novel graph-based iterative regularization method for solving nonlinear ill-posed inverse problems. 
The proposed method is examined under two distinct stopping criteria, namely the statistical discrepancy principle and the heuristic discrepancy principle. 
Unlike conventional approaches, the framework does not require the knowledge of the noise level. Instead, it employs the mean of i.i.d.\ estimators to approximate it. 
Comprehensive analyses of stability, regularity, and convergence are carried out under both stopping rules, based on standard local assumptions on the forward operator~$\F$ and general conditions on initializers~$\Psi_\theta$. 
The theoretical findings are further supported and validated through detailed numerical experiments and comparative studies on two numerical examples, including X-ray CT and phase retrieval CT problems.

The proposed framework has been developed within a finite-dimensional setting. As a potential direction for future research, it would be of significant interest to extend the convergence analysis to infinite-dimensional Hilbert or Banach space frameworks.

\section*{Data availability}
On reasonable request, the authors will provide the datasets and source code created during this study.

\bibliographystyle{siamplain}
\bibliography{references}
\end{document}